\newcommand{\re}{{\mathbb R}}
\newcommand{\R}{{\mathbb R}}
\newcommand{\ren}{{\mathbb R}^N}
\newcommand{\be}[1]{\begin{equation}\label{#1}}
\newcommand{\ee}{\end{equation}}
\newcommand{\prf}{\par\smallskip\noindent{\sl Proof. \/}}
\newcommand{\finprf}{\unskip\null\hfill$\;\square$\vskip 0.3cm}
\newenvironment{proof}{\prf}{\finprf}
\newtheorem{theorem}{Theorem}[section]
\newtheorem{lemma}{Lemma}[section]
\newtheorem{proposition}{Proposition}[section]
\newtheorem{remark}{Remark}[section]
\newtheorem{definition}{Definition}[section]
\newcommand{\ve}{\varepsilon}
\numberwithin{equation}{section}
\def\qed{\,\unskip\kern 6pt \penalty 500
\raise -2pt\hbox{\vrule \vbox to8pt{\hrule width 6pt
\vfill\hrule}\vrule}\par}
\definecolor{darkblue}{rgb}{0.05, .05, .65}
\definecolor{darkgreen}{rgb}{0.1, .65, .1}
\definecolor{darkred}{rgb}{0.8,0,0}
\newcommand{\blue}{\color{blue}}
\begin{document}
\title{\textbf{ Anisotropic Fast Diffusion Equations}\\[7mm]}

\author{\Large  Filomena Feo\footnote{Dipartimento di Ingegneria, Universit\`{a} degli Studi di Napoli
\textquotedblleft Parthenope\textquotedblright, Centro Direzionale Isola C4
80143 Napoli, \    E-mail: {\tt filomena.feo@uniparthenope.it}}  \,, \quad
\Large Juan Luis V\'azquez \footnote{Departamento de Matem\'aticas, Universidad Aut\'onoma de Madrid, 28049 Madrid, Spain. \newline
E-mail:
{\tt juanluis.vazquez@uam.es}} \,,
\\[8pt] \Large  and
 \ Bruno Volzone \footnote{Dipartimento di Scienze e Tecnologie, Universit\`{a} degli Studi di Napoli ''Parthenope", Centro Direzionale Isola C4, 80143 Napoli, Italy. \    E-mail: {\tt bruno.volzone@uniparthenope.it}}}

\date{\today} 

\maketitle

\begin{abstract}
We prove the existence of self-similar fundamental solutions (SSF) of the anisotropic porous medium equation in the suitable fast diffusion range. Each of such SSF solutions is uniquely determined by its mass. We also obtain the asymptotic behaviour of all finite mass solutions in terms of the family of  self-similar fundamental solutions.  Time decay   rates  are derived as well as other  properties of the solutions,    like quantitative boundedness, positivity and regularity. The combination of self-similarity and anisotropy is essential in our analysis and creates serious mathematical difficulties that are addressed by means of novel methods.

\end{abstract}

\setcounter{page}{1}

\noindent {\bf 2020 Mathematics Subject Classification.}
  	35K55,  	
   	35K65,   	
    35A08,   	
    35B40.   	

\

\noindent {\bf Keywords: } Nonlinear parabolic equations, fast diffusion, anisotropic diffusion, fundamental solutions,  asymptotic behaviour.

\

\numberwithin{equation}{section}

\section{Introduction }
This paper focusses on the study of the existence and uniqueness of  self-similar fundamental solutions  to the following \emph{anisotropic porous medium equation} (APME)
\begin{equation}\label{APM}
u_t=\sum_{i=1}^N(u^{m_i})_{x_i x_i}\quad \mbox{in  } \ \quad Q:=\mathbb{R}^N\times(0,+\infty)
\end{equation}
with $N\geq2$ and $m_i>0$ for $i=1,...,N$.  In case all exponents are the same we recover the well-known equation
$$
u_t=\Delta u^m, \quad m>0\,,
$$
which for $m=1$ is just the classical heat equation. For $m\ne 1$ it is a well-studied model for nonlinear diffusion and heat propagation. For $m>1$ the equation is degenerate parabolic and is called the Porous Medium Equation, PME, see \cite{Vlibro}. On the other hand, for $m<1$ the equation is singular parabolic and is called the Fast Diffusion Equation, FDE, see \cite{DK07, VazSmooth}.    The solutions are assumed to be nonnegative; this restriction makes sense in view of applications where $u$ is an evolving mass density.

According to standard terminology, a fundamental solution is a finite mass solution of the Cauchy problem    having the Dirac mass as its initial trace, more precisely $u(x,t)\to  M\,\delta(x)$ as $t\to 0$ in the sense of distributions.  The typical fundamental solution is the one with constant $ M=1$. The concept plays a central role in the theory of linear PDEs.    Fundamental solutions are also important in nonlinear parabolic problems of diffusion type, where they are also called source-type solutions, a main reference being Barenblatt's \cite{Barbk96}, see also \cite{Kamin, Vascppme, Vaz20}. Once constructed, the self-similar fundamental solutions are shown to be the asymptotic attractors of all    nonnegative  solutions with finite mass for a number of relevant PDEs.    It is our purpose to show that this phenomenon occurs in the anisotropic equation  APME.

   Equation \eqref{APM} and similar ones appear for instance in hydrology as simplified models for the motion of water in anisotropic media,   see \cite{Bear, H, S01, S01bis, SJ05, SJ06}. If the conductivities of the media may be different in different directions, the constants $m_i $ in \eqref{APM} may be different from each other. Note that the spatial operator in \eqref{APM} is the sum of independent { 1-dimensional }Laplacians along the different coordinate directions, each  applied to a possibly different power of $u$.
We will consider solutions to the Cauchy problem for \eqref{APM} with nonnegative initial data
\begin{equation}\label{IC}
u(x,0)=u_0(x), \quad x\in \R^{N}.
\end{equation}
We will assume that  $u_0\in L^1(\mathbb{R}^N)$, $u_0\ge 0$, and we put $M:=\int_{\mathbb{R}^N} u_0(x)\,dx$, so-called total mass.    We look for solutions $u\ge 0$.

Experience with the isotropic PME shows that the existence and behaviour of special solutions strongly depends on the range of exponents \cite{VazSmooth},  and so happens with their role in the theory.  In this paper we will focus on the fast diffusion range

\noindent  (H1) \centerline{$0< m_i< 1$ \qquad  for all $i=1,...,N$.}

\noindent Note that this is a condition of ``fast diffusion in all directions'' that is made here for convenience of exposition since it allows for a unified theory with clear-cut results. We will refer to the equation in that range as AFDE. As a natural extension, we also consider at the end of the paper cases where some exponents are 1, i.e., linear diffusion in some directions, but this is not our main interest.  Note that when all     $m_i=m<1$ are equal, we recover the classical (isotropic) Fast Diffusion Equation, and when $m=1$ the classical Heat Equation.

We need a further assumption on the exponents. We recall that in the isotropic fast diffusion equation  (\emph{i.e.,} equation \eqref{APM} with $m_{1}=...=m_{N}=m<1$), there is a well-known \emph{critical exponent},
\begin{equation}\label{m bar}
m_c:=1-\frac{2}{N}, \quad
\end{equation}
such that $m>m_c$ is a necessary and sufficient condition  for the existence of fundamental solutions, see for instance \cite{VazSmooth}.
In the same spirit, in this work we will always assume the average condition

 \noindent  (H2) \centerline{$\displaystyle \overline{m}:=\frac1{N}  {\sum_{i=1}^N m_i}   > m_c\,.$ }

\noindent    This can be written as $\sum_{i=1}^N m_i>N-2$. This condition is crucial in our paper. Indeed, we will show that (H2) alone ensures the existence of the self-similar fundamental solution (SSFS)   in the anisotropic FDE  case. The fact that the fundamental solution has a self-similar form will be a consequence of the analysis we perform, based on the scaling invariance satisfied by the equation. See details in Subsection \ref{ssc.sss}.    Let us point out that condition (H2)  may allow for some $m_i$ to be less than $m_c$ in dimensions 3 or more. In any case we take $m_i>0$. In dimension $N=2$ condition (H2) implies no restriction.

 The problem we discuss came to our attention years ago during a visit of Prof. B.~H. Song to Madrid. He then published a number of works on the issue, mentioned above. Of interest here are   \cite{SJ06} where solutions with finite mass are constructed, and \cite{SJ05} where a fundamental solution is constructed for general initial data, i.e., a solution with a Dirac delta as initial data. It was supposed to be the basis of asymptotic long-time analysis.

 We contribute the missing analysis of self-similarity, which produces a  critical amount of extra information    and paves the way to the asymptotic behaviour. We note the presence of the anisotropy produces several difficulties that cannot be approached by classical tools as in the isotropic case, hence the problem had remained open for all these years. Indeed,  the combination of self-similarity and anisotropy is an uncommon topic in the literature, see an example in \cite{Per2006}, far from our field. However, it is rich in details and consequences.

 Here,    we construct a supporting theory and   prove two main results. Firstly, we establish the existence of a unique fundamental solution of self-similar type (SSFS), one for every mass $M>0$. We do it by using a new fixed-point argument and the mass difference analysis, which are flexible techniques that could be useful in a broad variety of situations. This allows to identify in a very precise way not only the decay and propagation exponents in every direction, but also the whole asymptotic profile $F$ (see details in Section \ref{ssc.sss}), which is shown to be a solution to an anisotropic nonlinear elliptic problem of Fokker-Planck type:
 \begin{equation}\label{StatEq}
\sum_{i=1}^{N}\left[(F^{m_i})_{y_iy_i}+\alpha \sigma_i\left( y_i F\right)_{y_i}
\right]=0.
\end{equation}
 Explicit solutions of this    nonlinear elliptic   equation are not known so far,  but we prove that $F$ is a positive and $C^\infty$ smooth function. The proof of the result relies on tools like a comparison principle and the construction of an    explicit   anisotropic upper barrier,    an important tool   used to have an upper control of general solutions. A specific feature for the fixed point argument is the use of a suitable quantitative positivity lemma for solutions of the rescaled equation which lie below the anisotropic upper barrier at the initial time.
 Furthermore, numerical studies highlighted in Section \ref{se.numer} confirm the nonstandard shape of the self-similar
 profiles $F$ for different choices of the initial data.

The second main result shows the  role of the self-similar solutions we have just constructed as attractors. Thus, we are able to establish the sharp asymptotic convergence  of any nonnegative solution with finite mass towards the self-similar solution with the same mass, this being the other main  result of the paper (see Section 8). In this way we complete for our equation the program outlined by G. Barenblatt in \cite{Barbk96} about scaling,  self-similarity, and intermediate asymptotics.

 The case of partial linear diffusion, where some $m_i=1$, has some special features that we will briefly discuss at the end of the paper. The case where slow diffusion exponents $m_i>1$ appear deserves separate analysis and is not treated in this work.


\subsection{Self-similar solutions}\label{ssc.sss}

We present next in an informal way the main functions to be constructed and studied. The formal justification will be done in this paper in the framework of weak energy solutions. That concept is well known in the theories of nonlinear diffusion, but we will review the needed theory for the reader's benefit in Section \ref{sec.basic}, which contains  existence, uniqueness and useful properties of such solutions for the Cauchy problem. Later results on regularity and positivity, proved in the paper, simplify the issue.

   Let us examine our main construction.   The common type of self-similar solution of equation \eqref{APM} has the form
\begin{equation}\label{sss}
U(x,t)=t^{-\alpha}F(t^{-a_1}x_1,..,t^{-a_N}x_N)
\end{equation}
with constants $\alpha>0$, and $a_1,..,a_n\ge 0$ to be chosen below. We look for this type  as model solutions for our equation  \eqref{APM}.
As announced before, we will restrict the study to nonnegative solutions. \noindent Note that,  after writing $y=(y_1,\cdots,y_N)$ and $y_i=x_i \,t^{-a_i}$, we have
$$
U_t=-t^{-\alpha-1}\left[\alpha F(y)+
\sum_{i=1}^{N}a_iy_i\,F_{y_i}
\right]
$$
and
$$
\sum_{i=1}^{N}(U^{m_i})_{x_ix_i}=\sum_{i=1}^{N}t^{-(\alpha m_i+2a_i)}(F^{m_i})_{y_iy_i}.
$$
Therefore, equation \eqref{APM} becomes
\begin{equation}
\label{Eq1}
-t^{-\alpha-1}\left[\alpha F(y)+
\sum_{i=1}^{N} a_iy_i\,F_{y_i}
\right]=\sum_{i=1}^{N}t^{-(\alpha m_i+2a_i)}(F^{m_i})_{y_iy_i}.
\end{equation}
We see that time is eliminated as a factor in the resulting equation on the condition that:
\begin{equation}\label{ab}
\alpha(m_i-1)+2a_i=1 \quad \mbox{ for all } i=1,2,\dots,  N.
\end{equation}
We also want integrable solutions that will enjoy the mass conservation property, which
implies \ $\alpha=\sum_{i=1}^{N}a_i$.
Imposing both conditions, and putting $a_i=\sigma_i \alpha,$ we determine in a unique way the values for the exponents \ $\alpha$ and $\sigma_i$ (a lucky fact):
\begin{equation}\label{alfa}
\alpha=\frac{N}{N(\overline{m}-1)+2},
\end{equation}
and
\begin{equation}\label{ai} \ 
 \sigma_i= \frac{1}{N}+ \frac{\overline{m}-m_i}{2}.
\end{equation}

\begin{definition} A mass-preserving self-similar solution to \eqref{APM} is just a solution $U$ to \eqref{APM} of the form \eqref{sss}, where $a_i=\alpha \sigma_i$ for all $i=1,\cdots,N$, and $\alpha$ and $\sigma_i$ satisfy \eqref{alfa} and \eqref{ai}.
\end{definition}

 In what follows we will usually skip writing mass-preserving, because all solutions considered in this paper enjoy the property    (unless mention to the contrary).
Observe that by Condition (H2) imposed in the Introduction we have $\alpha>0$,  so that the self-similar solution will decay in time in maximum value like a power of time. This is a typical feature of many  diffusion processes.

As for the $\sigma_i $ exponents,    that control the expansion of the solution in the different coordinate directions with time , we easily see that \ $\sum_{i=1}^{N}\sigma_i=1$, and in particular $\sigma_i=1/N$ in the isotropic case.  Conditions (H1) and (H2) on the $m_i$  ensure that $\sigma_i> 0$. This means that the self-similar solution expands  as time passes, or at least does not contract, along any of the space coordinate variables.

With these choices,  and working again at the formal level for brevity, the \sl profile function \rm $F(y)$   must satisfy {the nonlinear anisotropic stationary equation  \eqref{StatEq} in $\mathbb{R}^N$.}

\begin{proposition}\label{Lem1}
$U(x,t)$ is a self-similar solution to \eqref{APM} of the form \eqref{sss} where $a_i=\alpha \sigma_i$ for all $i=1,\cdots,N$ and $\alpha$ and $\sigma_i$ satisfy \eqref{alfa} and \eqref{ai} if  and only if its profile $F$ satisfies the stationary equation \eqref{StatEq}. Moreover, $\int U(x,t)\, dx =\int F(y)\, dy=M$ \ {\rm for} $t>0.$
\end{proposition}

\noindent {\sl Proof.} Under our choices of exponents $\alpha$ and $\sigma_i$ given by \eqref{alfa} and \eqref{ai}, equation \eqref{Eq1} becomes \eqref{StatEq}. Besides, the conservation of mass follows by a change of variables. \qed

The    profile $F$   is an interesting mathematical object in itself, as a solution of a nonlinear anisotropic Fokker-Planck equation.
It is our purpose to prove that there exists a suitable solution of this elliptic equation, which is the anisotropic version of the equation of the Barenblatt profiles in the standard PME/FDE, cf. \cite{Barbk96, Vascppme, Vlibro}.
  Again, the general theory deals with weak energy solutions, but we will prove later that the self-similar profiles are even $C^\infty$ smooth functions (see Subsection \ref{ssec.reg}).
The solution is indeed explicit in the isotropic case:
 $$
 F(y;m)=\left( C + \frac{\alpha(1-m)}{2mN}|y|^{{2}}\right)^{{{-1}}/(1-m)},
 $$
 with a free constant $C>0$ that fixes the total mass    $M$   of the solution, $C=C(M)$.    An explicitly expression of $C(M)$ is given in \cite{BS} and in \cite{Vlibro}. Moreover, we refer the reader to Subsection 2.3 for more details on the mass changing rule. It is clear that this formula breaks down for $m\le m_c$ (called very fast diffusion range), where many new developments occur, see the monograph \cite{VazSmooth} and papers \cite{BBDGV, BDGV}.    We will not enter into that range and its features. This explains    our insistence   on restriction (H2).

Here is the main result of the present paper,    dealing with   the theory of self-similar solutions.

\begin{theorem}\label{fundamental solution} Under the restrictions (H1) and (H2), for any mass $M>0$ there is a unique self-similar fundamental solution $U_M(x,t)\ge 0$ of equation \eqref{APM} with mass $M$. The profile $F_M$  of such a solution is an SSNI (separately symmetric and nonincreasing) positive function. Moreover,   $0<F_M(y)\le G_k(y)$, for a suitable  choice of the  barrier function $G_k$ given in formula \eqref{G_k}.
\end{theorem}

\noindent{ \bf Remarks.}
1)  We recall that by solution we understand a weak energy solution, see the theory in Section \ref{sec.basic}. In the end, the solution of this class is proved to be smooth, so the weak energy solution is indeed a classical solution of the equation.

\smallskip

2) For the concept of separately symmetric and nonincreasing function (SSNI for short) see Section \ref{ALEKSAND}. The proof of the main theorem will be done in Section \ref{sec.ex.ssfs}.

\smallskip

3) We will not get any explicit formula for $F_{M}$ in the anisotropic case, but we have suitable estimates, in particular regularity and decay in space.  Thus, we get a clean upper bound for the behaviour of $F_{M}$ at infinity:
$F_{M}(y)\le O(|y_i|^{-2/(1-m_{i})})$. Anisotropy will be evident in the graphics of the level lines, see also the Numerical Section \ref{se.numer}.

\smallskip

4)  The existence of a fundamental solution, not necessary self-similar, was proved in \cite{SJ05}
with a different approach. There is to our knowledge no proof of uniqueness for such a general concept of solution.
Uniqueness is a crucial aspect in the study of asymptotic behaviour to be done later.

\smallskip

5) As in the isotropic case, there is an algebraic way to pass from any mass $M_{1}>0$ to another mass $M_{2}>0$, see Subsection \ref{sec scaling}. Thus, all the $F_M$ functions are rescalings of $F=F_1$, of the form $F_M(y)=k F(k^{\nu_i}y_i)$ with suitable constants  $k=k(M)$ and $\nu_i>0$.

The following result shows that self-similar solutions of the type  \eqref{sss} are actually fundamental solutions to \eqref{APM}.
 \begin{lemma}\label{selfsimarefundsolut}
 If    $U(x,t)=t^{-\alpha}F(t^{-a_1}x_1,..,t^{-a_N}x_N)$ is the self-similar function   defined in \eqref{sss}, where $a_i=\alpha \sigma_i$ for all $i=1,\cdots,N$, and also $\alpha$ and $\sigma_i$ satisfy \eqref{alfa} and \eqref{ai}, then it is  a fundamental solution of the Cauchy Problem \eqref{APM}-\eqref{IC} if \ $F\ge0$,  $F\in L^1(\ren)$ and it satisfies equation \eqref{StatEq}.
 \end{lemma}

\noindent {\sl Proof.} We recall that a self-similar fundamental solution with mass $M$ to the Cauchy Problem \eqref{APM}-\eqref{IC} is just a self-similar solution to \eqref{APM} that tends to  the Dirac delta with mass $M$ as time goes to $t=0$ in a suitable weak sense.
Thus, we only have to check the convergence of $U(x,t) $ to $\delta(x)$ in the sense of measures, i.e.
$$
\lim_{t\to 0}\int_{\mathbb{R}^N} U(x,t)\varphi(x)\,dx= M \varphi(0)
$$
for all $\varphi$ continuous, nonnegative and bounded in $\ren$. This follows from the self-similarity formula and the integrability of $F$.
\qed

\medskip

\subsection{Self-similar variables}
In several instances in the sequel it will be convenient to pass equation \eqref{APM}   to self-similar variables,
by zooming the original solution according to the self-similar exponents \eqref{alfa}-\eqref{ai}. More
precisely, the change is done by the set of anisotropic formulas
 \begin{equation}\label{NewVariables}
v(y,\tau)=(t+t_0)^\alpha u(x,t),\quad \tau=\log (t+t_0),\quad y_i=x_i(t+t_0)^{-\sigma_i\alpha} \quad i=1,..,N,
\end{equation}
with $\alpha$ and $\sigma_i$ as calculated before.
We recall that all of these exponents are positive. There is a free time parameter $t_0\ge 0$ (a time shift) that can be used at convenience, normally $t_0=0$ or $t_0=1$. See in this regard the discussion in \cite{VazSmooth}.


\begin{lemma}\label{Lem1}
If $u(x,t)$ is a solution (resp. supersolution, subsolution) of \eqref{APM}, then $v(y,\tau)$ is a solution (resp. supersolution, subsolution) of
\begin{equation}\label{APMs}
v_\tau=\sum_{i=1}^N\left[(v^{m_i})_{y_i y_i}+\alpha \sigma_i \left(\,y_i\,v\right)_{y_i}\right] \quad \text{ in \quad }\mathbb{R}^N\times(\tau_0,+\infty).
\end{equation}
\end{lemma}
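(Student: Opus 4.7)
The plan is to perform the change of variables \eqref{NewVariables} directly and show that the PDE \eqref{APM} for $u$ transforms into \eqref{APMs} for $v$, with inequalities preserved throughout (so the cases of super- and sub-solution follow from the same computation).

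First, I would invert the change of variables to write $u(x,t)=(t+t_0)^{-\alpha}v(y,\tau)$, where $y_i=x_i(t+t_0)^{-\sigma_i\alpha}$ and $\tau=\log(t+t_0)$. A direct application of the chain rule gives
\[
u_t=(t+t_0)^{-\alpha-1}\!\left[v_\tau-\alpha\,v-\alpha\sum_{i=1}^{N}\sigma_i y_i v_{y_i}\right],
\]
and, for each $i$,
\[
(u^{m_i})_{x_ix_i}=(t+t_0)^{-\alpha m_i-2\sigma_i\alpha}\,(v^{m_i})_{y_iy_i}.
\]
Plugging these into \eqref{APM}, the key observation is that the exponents match: thanks to the defining relation \eqref{ab} (which, with $a_i=\sigma_i\alpha$, reads $\alpha(m_i-1)+2\sigma_i\alpha=1$), every term on the right-hand side carries the same power $(t+t_0)^{-\alpha-1}$ as the left-hand side, so that time can be factored out uniformly.

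After cancelling the common factor $(t+t_0)^{-\alpha-1}$, one arrives at
\[
v_\tau=\sum_{i=1}^{N}(v^{m_i})_{y_iy_i}+\alpha\,v+\alpha\sum_{i=1}^{N}\sigma_i y_i v_{y_i}.
\]
To recognize this as \eqref{APMs}, I would use the identity $(y_i v)_{y_i}=v+y_i v_{y_i}$ together with the normalization $\sum_{i=1}^{N}\sigma_i=1$ (established right after \eqref{ai}), which gives $\alpha\sum_i \sigma_i(y_i v)_{y_i}=\alpha v+\alpha\sum_i\sigma_i y_i v_{y_i}$. This is exactly the drift form appearing in \eqref{APMs}.

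For the super-/sub-solution statements, since the change of variables is a pointwise one and multiplication by the positive factor $(t+t_0)^{\alpha+1}$ preserves inequalities, the same calculation shows that $u_t\gtreqless\sum_i(u^{m_i})_{x_ix_i}$ is equivalent to $v_\tau\gtreqless\sum_i[(v^{m_i})_{y_iy_i}+\alpha\sigma_i(y_iv)_{y_i}]$. There is essentially no obstacle here; the only point requiring a small amount of care is the bookkeeping of powers of $(t+t_0)$, which is handled precisely by invoking the self-similarity relation \eqref{ab} — this is the reason the scaling exponents were chosen as in \eqref{alfa}--\eqref{ai} in the first place.
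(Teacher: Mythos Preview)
Your computation is correct and is exactly the straightforward chain-rule verification that the paper has in mind; indeed, the paper states Lemma~\ref{Lem1} without proof, since the derivation is essentially the same as the one already carried out for the self-similar ansatz in Section~\ref{ssc.sss}. Your bookkeeping of the powers of $(t+t_0)$ via relation \eqref{ab} and the use of $\sum_i\sigma_i=1$ to rewrite the drift in divergence form are precisely the two ingredients needed.
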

This equation will be a key tool in our study. Note that the rescaled equation does not change with the time-shift  $t_0$   but the initial value of the new time does, $\tau_0 = \log(t_0)$. If $t_0 = 0$ then $\tau_0 =-\infty$ and the $v$ equation is defined for all $\tau\in \mathbb{R}$.

We stress that this change of variables preserves the $L^1$ norm: the mass of the $v$ solution at new time $\tau:=\log(t+t_0)\geq\tau_0:=\log t_0$ equals that of the $u$ at the corresponding time $t\geq t_0$:
 $$\int_{\mathbb{R}^N}v(y,\tau) \, dy=\int_{\mathbb{R}^N}u(x,t) \, dx\quad \text{ if }\tau=\log (t+t_0).$$

   We recall that so far the approach is similar to the one used in the works that deal with the isotropic PME and FDE, as explained in great detail in \cite{Vlibro, VazSmooth}.

\subsection{Outline of later sections }\label{ssec.outline}

After the introduction of the problem, conditions, and concept of self-similarity  done in this section, we devote Section \ref{sec.basic} to establish the basic theory  of weak energy  solutions to be used and its main properties. The theory follows ideas used in the isotropic case but there are some special features and derivations that we explain in some  detail and they lead to Theorem   \ref{EUWES}.

Section \ref{sec.upp} contains the construction of the Anisotropic  Upper Barrier, a key tool in the proof of existence of a self-similar fundamental solution. This is followed by two technical sections on Aleksandrov's Principle and local positivity.

After this preparation, we are ready for the statement and proof of existence and uniqueness of a self-similar fundamental solution, contained in the important Section \ref{sec.ex.ssfs}. This proof faces several difficulties that are not found in previous works on degenerate parabolic equations of porous medium or fast diffusion type. A number of novel ideas are introduced; some similar ideas were recently used in \cite{Vaz20}. We also prove monotonicity, positivity and regularity of the profile.

 Section \ref{pos.gen}  deals with the strict positivity of nonnegative solutions, a typical fast diffusion feature.    Regularity follows.


In Section \ref{sec.asymp} we establish the asymptotic behaviour of finite mass solutions, another goal of this paper. Solution is understood in the sense of    Theorem \ref{EUWES} , Section \ref{sec.basic}.

\begin{theorem}\label{thmasympto}
Let $u(x,t)$ be the unique solution  of the Cauchy problem for equation \eqref{APM} with nonnegative initial data $u_{0}\in L^{1}(\R^{N})$  under the restrictions (H1) and (H2). Let $U_{M}$ be the unique self-similar fundamental   solution with the same mass as $u_{0}$. Then,
\begin{equation}\label{L1conv}
\lim_{t\rightarrow\infty}\|u(t)-U_{M}(t)\|_{1}=0.
\end{equation}
The convergence holds in the $L^{p}$ norms, $1\le p <\infty$,  in the proper scale
\begin{equation}\label{Lpconv}
\lim_{t\rightarrow\infty}t^{\frac{(p-1)\alpha}{p}}\|u(t)-U_{M}(t)\|_{p}=0,
\end{equation}
where $\alpha=N/(N(\overline{m}-1)+2)$ is the constant in \eqref{alfa}.
 Finally, under certain conditions of the initial data, convergence    \eqref{Lpconv}   holds also for $p=\infty$,   see Theorem \ref{thm8.2}.
\end{theorem}

 At the time the present article was written, a number of similar ideas were used in the study of nonlocal nonlinear diffusion in \cite{VspLsub} and then they appeared in the simpler study of anisotropic $p$-Laplacian local diffusion in \cite{FVV21}.

As a complement to this information, numerical studies are produced in Section \ref{se.numer} to make clear the effect of anisotropy on the shape of the solutions. We briefly discuss the case of partial linear diffusion in Section \ref{sec.meq1}. The paper ends with a section on comments  and open problems.


\section{Preliminaries. Basic theory}\label{sec.basic}

Even in the case of the isotropic FDE the existence of classical solutions is not granted a priori, so the basic existence and uniqueness theory deals with solutions in some generalized sense using the techniques of Nonlinear Analysis. This approach is followed here. The simplest concept of solution of the APME  is in principle the \sl distributional solution \rm where we consider a function $u\ge 0$ defined in $Q=\mathbb{R}^N\times(0,+\infty)$, such that $u, u^{m_{i}}\in L^{1}_{loc}(Q)$ for all $i=1,...,N$, and equation \eqref{APM}  is solved  the distributional sense, i.e.,
\begin{align}\label{distr.ren}
&\int_{0}^{T}\int_{\ren}u\varphi_{t}\,dx\,dt+\sum_{i=1}^{N}\int_{0}^{T}\int_{\ren}{ u^{m_{i}}\,\varphi_{x_{i}x_{i}}}dx\,dt\\
&=\int_{\ren}u(x,T)\varphi(x,T)dx\nonumber -\int_{\ren}u_{0}(x)\varphi(x,0)dx,
\end{align}
for all the compactly supported test functions $\varphi\in C^{\infty}(\overline Q)$ and every $T>0$.

The need to prove uniqueness and a number of extra properties for the class of solutions we actually use leads in this section to the introduction of more restrictive concept of solution. Thus, for the isotropic PME/FDE the class of \sl mild solutions \rm described by Crandall-Liggett in \cite{CL71} with $L^1$ initial data provides a general concept that enjoys the properties of uniqueness, comparison, smoothing effect, energy estimates and conservation of mass, among others.    See also B\'enilan's approach with his integral solutions   in \cite{Benilan72}. For the application to the theory of the PME we refer to \cite{Vlibro}. 
Like in the PME/FDE the absence of a right-hand side in the equation allows to conclude that a more suitable subclass of distributional solutions enjoys existence and uniqueness as well as good estimates. This is the class of \sl weak energy solutions \rm contained in Theorem \ref{EUWES}.

We recall here that the existence and uniqueness of suitable solutions of our Cauchy problem with integrable nonnegative data was solved by Song and Jian in  \cite{SJ06} after a fundamental solution was constructed in \cite{SJ05}. Thus, their  Theorem 1.2 proves that, under some assumptions on the problem, for any nonnegative $u_0 \in L^1(\mathbb{R}^N)$ {there is a unique function $u$ such that $u, u^{m_{i}}\in L^{1}_{loc}(Q)$ for all $i=1,...,N$, solving equation \eqref{APM} in the distributional sense on $Q=\mathbb{R}^N\times(0,+\infty)$, with the following properties:}

- $u\in C([0,\infty): L^1(\mathbb{R}^N))$, \ $u \in C(Q)\cap L^\infty(\mathbb{R}^N \times [\tau,\infty))$ for each $\tau>0$,

- $u$  takes the initial data  in the sense that $u(x,t)\to u_0(x)$ in $L^1(\ren)$ as $t\to 0$.

- The solution preserves the total mass, $M=\int_{\mathbb{R}^N} u_0(x) \,dx=\int_{\mathbb{R}^N} u(x,t)\,dx$.

They call this type of solution the $ L^1$-regular solution.

\medskip

This is the complete statement of the results    we use in the paper.

 \begin{theorem}\label{EUWES}    Let the exponents $m_i$ satisfy assumptions (H1) and (H2). Then, for any nonnegative $u_0 \in L^1(\mathbb{R}^N)$ {there is a unique function $u\in C([0,\infty): L^1(\mathbb{R}^N))$ such that $u, u^{m_{i}}\in L^{1}_{loc}(Q)$ for all $i=1,...,N$, and equation \eqref{APM} holds in the distributional  sense in $Q=\mathbb{R}^N\times(0,+\infty)$, with the following additional properties:}

1)  $u(x,t) $ is a uniformly bounded function for each $\tau>0$ with an estimate of the form $\|u(\cdot,t)\|_\infty\leq C t^{-\alpha}$. The precise estimate is given in \eqref{Linfty-L1}.

2)  Let $Q_\tau=\ren\times (\tau,\infty)$. We have $\partial_i u^{m_{i}}\in L^{2}(Q_\tau)$ for every $i$ and the energy estimates \eqref{Energywholespace} are satisfied.
 Equation \eqref{APM} holds in the weak sense of  \eqref{weak.ren} applied in $Q_\tau$ for every $\tau>0$.

3) Consequently, the maps $S_t: u_0\mapsto u(\cdot,t)$ generate a semigroup of $L^1$ ordered contractions in $L^1_+(\ren)$. The $L^1$-contraction estimates \eqref{L1_contr} are satisfied. The maximum principle applies.

4) Conservation of mass holds: for all $t>0$ we have \ $\int u(x,t)\,dx=\int u_0(x)\,dx$. Assumption (H2) is crucial.

5) If we start with initial data $u_0\in L^1(\ren)\cap L^\infty(\ren)$ we may also conclude item 2) with $\tau=0$ and $u(x,t)$ is uniformly bounded and continuous   in space and time.

 \end{theorem}

Below, we will follow an approach   to existence that is self-contained. Indeed, we want to establish the existence of  a non-negative solution $u(x,t)$ with nonnegative initial datum $u_0$ by a method of smooth positive approximations for better-behaved approximate problems with bounded positive data. Though this theory is not the main scope of the paper, in view of the previous results by Song et al. and the current state of the theory of nonlinear diffusion equations, we hope it will be enlightening for the reader and  useful in justifying the above items and different results and proofs in what follows.


\subsection{Construction of solutions by approximation}\label{ssec.approx}

We start with initial data $u_0\in L^1(\ren)\cap L^\infty(\ren)$ and construct an ($L^2$) \sl weak energy solution \rm  $u$,  in the sense that $u\in L^{2}(Q)$, $\frac{\partial}{\partial x_{i}}u^{m_{i}}\in L^{2}(Q)$ \, for all $i=1,\cdots,N$  and it satisfies
\begin{align}\label{weak.ren}
&\int_{0}^{T}\int_{\ren}u\varphi_{t}\,dx\,dt-\sum_{i=1}^{N}\int_{0}^{T}\int_{\ren} (u^{m_{i}})_{x_{i}}\varphi_{x_{i}}dx\,dt\\
&=\int_{\ren}u(x,T)\varphi(x,T)dx\nonumber -\int_{\ren}u_{0}(x)\varphi(x,0)dx,
\end{align}
for all the test functions $\varphi\in C^{1}(Q)$ with $\varphi(x,t)\rightarrow 0$ as $|x|\rightarrow \infty$ for all $t$. Moreover, these solutions will enjoy the \sl energy estimates   \rm
\begin{equation}\label{Energywholespace bis}
\begin{split}
 4 \sum_{j=1}^N\frac{m_im_j}{(m_i+m_j)^2}\int_0^T\int_{\ren}& \left|\frac{\partial}{\partial x_j}\left(u^{\frac{m_i+m_j}{2}}\right)\right|^2 \, dx\, dt
\\
\leq&\int_{\ren}\left[
\frac{1}{m_i+1}{u_0}^{m_i+1}
\right]\, dx
-
\int_{\ren}\left[
\frac{1}{m_i+1}{u}^{m_i+1}(x,T)
\right]\, dx
\end{split}
\end{equation}
for all $i=1,...,N$ and $T>0$.  Since all the terms in the left-hand side are nonnegative we get  in particular
\begin{equation}\label{Energywholespace}
\int_0^T\int_{\ren}
\left|\frac{\partial}{\partial x_i}u^{m_i}\right|^2 \, dx\, dt
\leq\int_{\ren}\left[
\frac{1}{m_i+1}{u_0}^{m_i+1}
\right]\, dx - \int_{\ren}\left[
\frac{1}{m_i+1}{u}^{m_i+1}(x,T)
\right]\, dx
\end{equation}
for all $i=1,...,N$ and $T>0$. This estimate is called the energy estimate.

(i) \textit{Sequence of approximate Cauchy-Dirichlet problems in a ball. } We now consider the following sequence of approximate Cauchy-Dirichlet problems
\begin{equation}\label{PCD}\tag{$P_n$}
\left\{
\begin{aligned}
    &(u_n)_t=\sum_{i=1}^N(u_n^{m_i})_{x_ix_i} & \quad \text{in }Q_n:=B_n(0)\times (0,+\infty), \\
    &u_n(x,0)={u_0}_n(x) &\quad  \text{for }|x|\leq n, \\
   &u_n(x,t)=0 &\quad  \text{for }|x|=n, t\geq0,
\end{aligned}
\right.
\end{equation}
where $B_n(0):=\{x:|x|<n\}$, and \ ${u_0}_n\ge 0$ is a suitable approximation of $u_0$ in $B_n(0)$. This is a rather standard
method. However, solving the problem in this formulation encounters the difficulty that the equation is not uniformly parabolic at the level $u=0$ because of the lack of regularity of the powers $u^{m_i}$  at such level, i.e.,  the diffusion coefficients $m_iu^{m_i-1}$ blow up  when $u \to 0$. This is well-known in the isotropic PME case, see Theorem 5.5 in \cite{Vlibro}. To avoid this degenerate parabolic character we will use a rather standard regularization approach. Instead of solving $(P_n)$ we begin by constructing a sequence of approximate initial data $u_{0,n,\varepsilon}$ which do not take the value $u = 0$. We will avoid the singularity of the equation by moving up the initial and boundary data. Thus, we replace problem \eqref{PCD} by
\begin{equation}
\label{PCD appr}
\tag{$P_{n,\varepsilon}$}
\left\{
\begin{aligned}
    &(u_{n,\varepsilon})_t=
    \sum_{i=1}^N\left(a_\varepsilon^i(u_{n,\varepsilon})(u_{n,\varepsilon})_{x_i}\right)_{x_i} & \quad \text{in }Q_n, \\
    &u_{n,\varepsilon}(x,0)=u_{0,n,\varepsilon}(x) &\quad  \text{for }|x|\leq n, \\
   &u_{n,\varepsilon}(x,t)=\varepsilon &\quad  \text{for }|x|=n, \ t\geq0,
\end{aligned}
\right.
\end{equation}
where, for definiteness, we put \ $u_{0,n,\varepsilon}=u_{0n}+\varepsilon$. We recall that we are assuming $u_0$ bounded.
The new diffusion coefficients $a_\varepsilon^i(u_{n,\varepsilon})$ are chosen to be bounded and uniformly bounded from below and the nonlinearities are such that $a_\varepsilon^i(z)=m_{i}z^{m_{i}-1}$ for $z\in[\varepsilon, \sup u_{0}+\varepsilon]$.

Since problem \eqref{PCD appr} is uniformly parabolic, we can apply  the standard quasilinear theory,   see \cite{LSU} , to find a unique solution $u_{n,\varepsilon}(x,t)$, which is bounded from below by $\varepsilon>0$ in view of the Maximum Principle. Moreover, the solutions $u_{n,\varepsilon}$ in this step are $C^\infty(Q_n)$ by bootstrap arguments based on repeated differentiation and interior regularity results for parabolic equations. Using again the Maximum Principle we conclude that
 $\varepsilon\le u_{n,\varepsilon} \le \sup u_{0}+\varepsilon$.
It follows by the definition of $a_\varepsilon^i$ that we can then replace
$a_\varepsilon^i(u_{n,\varepsilon})\,\partial_{x_i}(u_{n,\varepsilon})$  by $\partial_{x_i}(u^{m_{i}})$ in the equation of \eqref{PCD appr}, as planned from the beginning.

  Moreover, we get monotonicity in time for different norms of these solutions. Indeed, we easily obtain for $u=u_{n,\ve}$
\begin{equation}\label{monot.norm}
\frac{d}{dt}\int_{B_n(0)}(u-\ve)^p(x,t)\,dx=-p(p-1)\sum_{i=1}^{N} m_i \int_{B_n(0)} (u-\ve)^{p-2}u^{m_i-1}\left|u_{x_{i}}\right|^{2}\,dx\le 0,
\end{equation}
from which we conclude that the $L^p$ norms $\|u(\cdot,t)-\ve\|_p$ are nonincreasing in time for every $p> 1$, and in the limit also for $p=1$ and $p=\infty$. This will be recalled and extended in Proposition \ref{decay of the $L^p$} and other places.

In order to get energy estimates that are uniform in $\ve$ and $n$, we proceed as follows. We multiply the equation in \eqref{PCD appr} by $\eta_\varepsilon=u_{n,\varepsilon}^q-\varepsilon^{q}$ with $q=m_i$ for some $i$. Integrating by parts in $B_n(0)\times(0,T)$ and recalling the non-negativity of the solutions, we get
\begin{equation}\label{E1}
\begin{split}
4 \sum_{j=1}^N\frac{m_im_j}{(m_i+m_j)^2}\int_0^T\int_{B_n(0)}&\left|\frac{\partial}{\partial x_j}\left(u_{n,\varepsilon}^{\frac{m_i+m_j}{2}}\right)\right|^2 \, dx\, dt
\\
\leq&\int_{B_n(0)}\left[
\frac{1}{m_i+1}{u_{0,n,\varepsilon}}^{m_i+1}
-\varepsilon^{m_i}u_{0,n,\varepsilon}
\right]\, dx
\\
&-
\int_{B_n(0)}\left[
\frac{1}{m_i+1}{u}_{n,\varepsilon}^{m_i+1}(x,T)
-\varepsilon^{m_i}{u}_{n,\varepsilon}(x,T)
\right]\, dx
\end{split}
\end{equation}
for each $i$  with $T>0$. We can sum in $i=1,...,N$ to get a joint inequality.

\medskip

(ii)\textit{ Passage to the limit as $n\rightarrow\infty$ }.   We let the ball $B_n(0)$ expand into the whole space for fixed $\ve>0$. The family $\{u_{n,\varepsilon}: n\ge 1\}$ is uniformly bounded in $Q_n$ and also uniformly away from 0.
We recall that each $u_{n,\varepsilon}$  is a non-negative solution of problem \eqref{PCD appr}. Since $u_{n,\varepsilon}(x,t)\leq u_{n+1,\varepsilon}(x,t)$ on the boundary of the cylinder $Q_n$,  applying the classical comparison principle we get \ $u_{n,\varepsilon}(x,t)\leq u_{n+1,\varepsilon}(x,t)\text{ \ in } Q_{n}$. Thus, we obtain the monotonicity of $u_{n,\varepsilon}$ in $n$ and we are able to pass to the limit as $n\rightarrow\infty$.  Moreover, estimate \eqref{E1} guarantees a uniform estimate of $\partial_{x_i}(u_{n,\ve}^{m_i})$ in $L^2(B_n(0)\times(0,\infty))$ for all $i$. Using the interior regularity for uniformly parabolic equations (already quoted), we may pass to the limit and get
$$
u_\ve (x,t)=\lim_{n\to\infty} u_{n,\varepsilon}(x,t),
$$
which is a classical solution of \eqref{APM} in the whole space. Since
$$
\varepsilon\leq u_{n,\varepsilon}(x,t)\leq \sup u_0 +\varepsilon\quad \text{ in } Q_n
$$
the same inequalities apply to $u_\ve(x,t)$ in $Q=\R^N\times (0,\infty)$.

It is easy to see that the monotonicity in time of the norms  $\|u-\ve\|_p$ is kept in the limit. In order to pass to the limit in the energy inequalities we have to find an expression in the right-hand side of \eqref{E1} that is uniformly bounded in $n$ and allows for passage to the limit $n\to \infty$. We proceed as follows with $u=u_{n,\ve}$:
the mentioned right-hand term in \eqref{E1} can be written as
\begin{equation}\label{E2}
\begin{split}
\dots \leq& \frac{1}{m_i+1} \int_{B_n(0)} ( u_{0,n,\varepsilon}^{m_i}-\ve^{m_i})\, u_{0,n,\varepsilon} \, dx
\\
& + \frac{m_i}{m_i+1}
\int_{B_n(0)}\ve^{m_i} ({u}_{n,\varepsilon}(x,T)-{u}_{n,\varepsilon}(x,0))\, dx-\frac{1}{m_i+1} \int_{B_n(0)} ({u}^{m_{i}}_{n,\varepsilon}(x,T)-\varepsilon^{m_{i}}){u}_{n,\varepsilon}(x,T)\, dx\\
&=:I_{1}+I_{2}+I_{3}.
\end{split}
\end{equation}
  We may neglect the last term since the integrand is non-negative.
 We can pass to the limit as $n\to\infty$ in the remaining expression, because both integrals $I_1, I_2$ can be bounded by a constant not depending on $n$. Indeed, by the mean value theorem (recall that $u_{0,n,\varepsilon}=u_{0n}+\varepsilon$) we have
\[
I_{1}\leq \frac{1}{\varepsilon}\frac{m_{i}}{m_{i}+1}\int_{B_{n}(0)}(u_{0n}+\varepsilon)^{m_{i}}u_{0n}\,dx\leq C_1(\varepsilon,N)
\]
Moreover, the $L^{p}$ monotonicity of the norm \eqref{monot.norm} gives $I_{2}\leq0$.

Now, since $T$ is arbitrary, it follows that $\{\frac{\partial}{\partial x_i}u_{n,\varepsilon} ^{m_i}\}$ is uniformly bounded in $L^2(Q)$ for all $i$. Therefore, a subsequence of it converges to some limit $\psi_{\varepsilon,i}$ weakly in
$L^2(Q)$. Since\\ $u_{n,\varepsilon}\rightarrow u_{\varepsilon}$ for $n\rightarrow\infty$ everywhere, we can identify $\psi_{\varepsilon,i}=\frac{\partial}{\partial x_i}u_{\varepsilon}^{m_i}$ in the
sense of distributions. Therefore we can pass to the limit in the energy estimate \eqref{E1} and obtain

\begin{equation}\label{Eeps}
\begin{split}
4 \sum_{j=1}^N\frac{m_im_j}{(m_i+m_j)^2}\int_0^T\int_{\R^{N}}&\left|\frac{\partial}{\partial x_j}\left(u_{\varepsilon}^{\frac{m_i+m_j}{2}}\right)\right|^2 \, dx\, dt
\\
\leq&\int_{\R^{N}}\left[
\frac{1}{m_i+1}({u_0}+\varepsilon)^{m_i+1}
-\varepsilon^{m_i}({u_0}+\varepsilon)
\right]\, dx
\\
&-
\int_{\R^{N}}\left[
\frac{1}{m_i+1}{u}_{\varepsilon}^{m_i+1}(x,T)
-\varepsilon^{m_i}{u}_{\varepsilon}(x,T)
\right]\, dx.
\end{split}
\end{equation}

\medskip

(iv) \textit{ Passage to the limit as $\ve \to 0$ }. We notice that the family $\{u_{\varepsilon}\}$ is monotone in $\ve$  by the construction of the initial data. We may then define the limit function
\begin{equation}\label{un}
u(x,t)=\lim_{\varepsilon\rightarrow0} u_\varepsilon(x,t)
\end{equation}
as a monotone limit in $ Q$ of bounded non-negative (smooth) functions. We see that $u_\varepsilon$ converges
to $u$ in a.e. in $Q$ and also in $L^\infty (0,T:L^p(K))$ for every $1 \leq p< \infty$, every finite $T$ and compact set $K\subset \R^N$. We want to show that the limit $u$ is a weak
energy solution of Problem  \eqref{PCD} with initial datum $u_0$. Passing to the limit in \eqref{Eeps} as $\varepsilon\rightarrow0$, we get the following anisotropic energy inequalities:
\begin{equation}\label{E2b}
\begin{split}
 4 \sum_{j=1}^N\frac{m_im_j}{(m_i+m_j)^2}\int_0^T\int_{\R^N}&\left|\frac{\partial}{\partial x_j}\left(u^{\frac{m_i+m_j}{2}}\right)\right|^2 \, dx\, dt+
\frac{1}{m_i+1} \int_{\R^N} { u }^{m_i+1}(x,T) \, dx
\\
&\leq
\frac{1}{m_i+1} \int_{\R^N} {u_0}^{m_i+1} \, dx
\end{split}
\end{equation}
for all $i$   and $T>0$.
Finally, since $u_\varepsilon$ is a classical solution, it clearly is a weak solution with
initial datum ${u_0}_\varepsilon$. Letting $\varepsilon \rightarrow 0$ in the weak formulation
we get that $u$ is a weak
solution  \eqref{PCD} with initial datum $u_0$, in the sense that $u$ satisfies the equality
\begin{equation}\label{weak.sol}
\begin{split}
&\int_{0}^{T}\int_{B_{n}(0)}u\varphi_{t}\,dx\,dt-\sum_{i=1}^{N}\int_{0}^{T}\int_{B_{n}(0)} (u^{m_{i}})_{x_{i}}\varphi_{x_{i}}dx\,dt\nonumber =
\\& \int_{B_{n}(0)}u(x,T)\varphi(x,T)dx-\int_{B_{n}(0)}u_{0}(x)\varphi(x,0)dx,
\end{split}
\end{equation}
for all the compactly supported  test functions $\varphi\in C^{1}(\R^N\times [0,\infty])$.

We remark that all the solutions $u_{\varepsilon}$ are bonded in $L^p(Q_T)$, $Q_T=\Omega\times (0,T)\blue$ for all $1\le p\le \infty$, while $\nabla u\in L^2(Q_T)$ since for all $i$
$$
 \partial_{x_{i}}u_{\varepsilon}=\frac{1}{m_{i}}\,u_{\varepsilon}^{1-m_i} \, (u_{\varepsilon}^{m_i})_{x_i}\in L^2(0,T:L^2(\Omega)).
$$
As for regularity in the time derivative we get weaker information. We have  $u_t=\sum_i \partial_i w_i$ with $w_i=(u^{m_i})_{x_i}, $  hence we have $u_t\in L^2(0,T:X)$ with $X=W^{-1,2}(\Omega)$, for any open bounded set $\Omega$ of $\R^{N}$.
Then the famous Aubin-Lions-Simon lemma \cite{Aub1963, Sim86}  that, in an adapted form, says that if a  sequence  $u_\varepsilon$ is bounded in $L^2(0,T:H^{1}(\Omega))$ and $\partial_t u_n$ is bounded in $L^p(0,T:X)$, $p\geq 1$ with any $X$ some Banach space containing $L^2(\Omega)$, then it is precompact in $L^2([0,T]:L^2(\Omega))$. If $p=\infty$ it is precompact in $C([0,T]:L^2(\Omega))$. We conclude that $u=\lim\limits_{\varepsilon\to 0} u_{\varepsilon}\in L^2([0,T]:L^2(\Omega))$ with a.e. limit.



\medskip

  \noindent {\bf Conclusion.} We may call this limit the \sl constructed  solution\rm, which is a weak energy solution obtained as a monotone limit of positive classical solutions. In the literature we find the name SOLA (i.e., solution obtained as limit of approximations) for a similar situation, see for instance \cite{DallA, KM}. Note that this object has been defined in a unique way by the above construction. Besides, such a type of limit solution is sometimes called in the technical literature a maximal solution, a property derived from our way of approximation. We are going to prove next a more general uniqueness result so that these labels will not be needed. Note that up to this point we are dealing with bounded solutions.

%

\subsubsection{ Comparison of constructed solution with the $L^1$ regular solution} \label{subsec_L1 reg}

Let us go back to the results of \cite{SJ06}. The existence of an $L^1$ regular solution is solved there
by a approximation method from below that starts by treating the case of initial data that are bounded and also compactly supported. For such data the two still missing properties: $u\in C([0,\infty): L^1(\mathbb{R}^N))$ and conservation of mass are proved. The proofs depend on the existence of an integrable barrier, see \cite[Theorem 1.2]{SJ06}. The uniqueness of the $L^1$ regular solution follows.

Now we want to compare the $L^1$ regular solution for an initial datum $u_0\ge 0$ with is bounded and compactly supported in $B_R(0)$, let us call it $\underline{u}$, with the $\ve$ approximation $u_\ve$ of our constructed solution, called here $u_c$. The idea is to use the fact that $u_\ve$ is classical and a strict supersolution for our data so it must be easy to put it on top of other solutions. Here is an argument: we recall (\cite{S01, S01bis, SJ06}) that the $L^1$ regular solution $\underline{u}$ can be obtained as  the limit of a sequence $u_k$  of classical approximations to the equation with initial data $u_k(x,0)=\min\{u(x), 1/k\}$ in some ball $B_{R_k}(0)$, $R_k\ge R$, and boundary data $u_k=1/k$ on  $|x|=R_k$. We choose $R_k\to\infty$. By a standard comparison theorem, using the fact that $u_\ve(x,t)\ge \ve$ everywhere, we conclude that for all large $k$ we have $u_k\le u_\ve$ everywhere in
$Q_k=B_{R_k}\times (0,\infty)$. In the limit $k\to \infty$ we get \ $\underline{u}\le u_\ve$ for every $\ve>0$, and the inequality holds in $Q$. Letting now $\ve\to 0$,
$$
\underline{u}(x,t)\le u_c(x,t).
$$

But conservation of mass for the first solution implies that mass must also be conserved for the second since mass cannot increase. Therefore, $\int (u_c(x,t) - \underline{u}(x,t))\,dx=0$ for all $t$. But then $\underline{u}\equiv u_c$.

This equivalence is extended by approximation and passage to the limit to all non-negative bounded data $u_0$. We conclude that the constructed weak energy solutions are the same as the $L^1$ regular solutions. We have uniqueness of such solutions. In what follows we may simply refer to them as \sl the \rm solutions.

Finally, let us recall that  by \cite[Theorem 1]{H} we have plain continuity, $u\in C(Q)$. Let us also add that at all points where $u_0$ is continuous, the constructed solution also is by a simple barrier argument based on comparison that we leave to the reader.

\newpage

\subsection{ $T$-contraction in $L^1$ norm} \label{subsec_contraction}

Let us first recall  the property the $L^p$ boundedness, that follows from formula \eqref{monot.norm} in the limit, plus conservation of mass for $p=1$.

  \begin{proposition}\label{decay of the $L^p$}
Let $u$ be the constructed solution with $u_0\in L^{1}(\mathbb{R}^N)\cap L^p(\mathbb{R}^N)$ for $p\in[1,+\infty]$. Then, $u(t)\in L^{p}(\mathbb{\R^{N}})$ and
\begin{equation}
\|u(t)\|_{p}\leq \|u_{0}\|_{p}.\label{boundLpnormindata}
\end{equation}
Under conditions (H1) and (H2) we have equality for $p=1$.
\end{proposition}

The next result shows that the set of constructed solutions enjoys the property of $L^1$ contraction in time in the strong form proposed by B\'enilan \cite{Benilan72} as $T$-contraction, a property that implies comparison.

\begin{theorem}\label{contraction}
For every two constructed  solutions $u_1$ and $u_2$ to \eqref{APM} with respective initial data $u_{0,1}$ and $u_{0,2}$ in $L^{1}(\R^{N})\cap L^\infty(\R^{N})$ we have
\begin{equation}\label{L1_contr}
\int_{\R^{N}} (u_1(t)-u_2(t))_+\,dx\le \int_{\R^{N}} (u_{0,1}-u_{0,2})_+\,dx\,.
\end{equation}
In particular, if $u_{0,1}\le u_{0,2}$ for a.e.  $x$, then for every $t>0$ we have $u_1(t)\le u_2(t)$ for all $x\in \R^N$ and $t>0$.
\end{theorem}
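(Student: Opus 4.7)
The plan is to carry out the classical Kato-type $L^{1}$-contraction argument, adapted term-by-term to the anisotropic operator. The only algebraic input needed is the monotonicity of each power $s\mapsto s^{m_i}$ for $m_i>0$ on $[0,\infty)$, which implies that $u_1-u_2$ and $u_1^{m_i}-u_2^{m_i}$ have the same sign at every point. I would first reduce to the smooth, strictly positive, non-degenerate approximations $u_{j,n,\varepsilon}$ ($j=1,2$) constructed in Section~\ref{ssec.approx} on the cylinders $B_n\times(0,T)$, where all formal manipulations are legitimate, and only at the end pass the resulting inequality to the double limit $\varepsilon\to 0$, $n\to\infty$.

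At the approximation level, subtract the two equations to obtain
\[
(u_1-u_2)_t=\sum_{i=1}^{N}\bigl(u_1^{m_i}-u_2^{m_i}\bigr)_{x_ix_i}.
\]
Fix a smooth nondecreasing approximation $p_\eta$ of the sign-plus function with $p_\eta(s)=0$ for $s\le 0$, $0\le p_\eta\le 1$, and $p_\eta'\ge 0$, and let $J_\eta$ be its primitive, so $J_\eta(s)\to s_+$ as $\eta\to 0$. Multiply the identity by $p_\eta(u_1-u_2)$ and integrate by parts twice in each variable $x_i$ on $B_n$. The boundary contributions vanish because the approximate solutions agree on $\partial B_n$ (both equal $\varepsilon$ in $(P_{n,\varepsilon})$). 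For the volume terms one gets, for each $i$,
\[
-\int_{B_n}\bigl(u_1^{m_i}-u_2^{m_i}\bigr)_{x_i}\,\bigl(u_1-u_2\bigr)_{x_i}\,p_\eta'(u_1-u_2)\,dx\le 0,
\]
since $p_\eta'\ge 0$ and the two factors $(u_1^{m_i}-u_2^{m_i})_{x_i}$ and $(u_1-u_2)_{x_i}$ are parallel by the chain rule and the monotonicity of the power. Hence
\[
\frac{d}{dt}\int_{B_n} J_\eta\bigl(u_1-u_2\bigr)\,dx\le 0.
\]
Integrating in time between $0$ and $t$ and letting $\eta\to 0$ (using dominated convergence and $0\le J_\eta(s)\le s_+$), we obtain the contraction at the approximation level.

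Finally, I would pass to the limit $\varepsilon\to 0$ and $n\to\infty$ using the monotone/$L^1$ convergence of $u_{j,n,\varepsilon}$ to $u_j$ established in Section~\ref{ssec.approx}: the map $(v_1,v_2)\mapsto\int (v_1-v_2)_+\,dx$ is continuous on $L^1(\mathbb{R}^N)\times L^1(\mathbb{R}^N)$, so the inequality survives the limit and yields \eqref{L1_contr}. The comparison statement is then automatic: if $u_{0,1}\le u_{0,2}$ almost everywhere then the right-hand side of \eqref{L1_contr} is zero, hence $(u_1(t)-u_2(t))_+=0$ a.e.\ for every $t>0$.

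The main obstacle is not the algebra but the justification of the test function: one must know that $(u_j^{m_i}-u_2^{m_i})_{x_i}\in L^2_{\mathrm{loc}}$ and that products such as $(u_1-u_2)_{x_i}(u_1^{m_i}-u_2^{m_i})_{x_i}p_\eta'(u_1-u_2)$ are integrable. This is precisely why the argument must be done on the smooth, strictly positive approximations, where classical regularity holds; the energy estimate \eqref{E1} from Section~\ref{ssec.approx} provides the uniform bounds needed to control all terms when taking the limits. Since the limit solution is unique by \cite{SJ06}, the inequality obtained for the approximations transfers to it without ambiguity.
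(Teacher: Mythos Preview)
Your overall strategy --- prove the contraction on the smooth, strictly positive approximations $u_{j,n,\varepsilon}$ in $B_n$ (where both have boundary value $\varepsilon$, so boundary terms drop) and then pass to the limit --- is sound, and the paper itself flags this as a legitimate alternative in the remark after the proof. The gap is in the core sign computation. After one integration by parts you display
\[
-\int_{B_n}\bigl(u_1^{m_i}-u_2^{m_i}\bigr)_{x_i}\,(u_1-u_2)_{x_i}\,p_\eta'(u_1-u_2)\,dx
\]
and claim it is nonpositive because ``the two factors $(u_1^{m_i}-u_2^{m_i})_{x_i}$ and $(u_1-u_2)_{x_i}$ are parallel by the chain rule and the monotonicity of the power''. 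This is false: $u_1^{m_i}-u_2^{m_i}$ is not a function of $u_1-u_2$ alone, so the chain rule does not align the gradients. Concretely, with $a=(u_1)_{x_i}$, $b=(u_2)_{x_i}$, $\alpha=m_iu_1^{m_i-1}$, $\beta=m_iu_2^{m_i-1}$, the pointwise product is $\alpha a^2-(\alpha+\beta)ab+\beta b^2$, which is negative for $a$ strictly between $b$ and $(\beta/\alpha)b$ whenever $\alpha\ne\beta$. (In the isotropic case one escapes this by testing with $p_\eta(u_1^{m}-u_2^{m})$, but here the exponents differ with $i$, so no single test function makes every term a perfect square.)

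The fix is to avoid that cross term altogether and use Kato's inequality as the paper does. On the approximations, first let $p_\eta\to\mathrm{sign}^{+}$ and use monotonicity of $s\mapsto s^{m_i}$ to replace $\mathrm{sign}^{+}(u_1-u_2)$ by $\mathrm{sign}^{+}(u_1^{m_i}-u_2^{m_i})$; Kato then gives
\[
\mathrm{sign}^{+}(u_1-u_2)\,\partial_{x_ix_i}(u_1^{m_i}-u_2^{m_i})\le \partial_{x_ix_i}(u_1^{m_i}-u_2^{m_i})_{+}\,,
\]
and integrating over $B_n$ yields a boundary flux of $(u_1^{m_i}-u_2^{m_i})_{+}$, which is $\le 0$ since this function vanishes on $\partial B_n$ and is nonnegative inside. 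From there your limit argument goes through. For comparison, the paper's main proof works directly on $\mathbb{R}^N$ with cutoffs $\zeta_n$, which produces residual terms $\int (u_1^{m_i}-u_2^{m_i})_{+}\,\partial_{x_ix_i}\zeta_n$; controlling those requires a Young-inequality splitting and an \emph{anisotropic} rescaling of $\zeta_n$ that uses the hypothesis~(H2) in an essential way. Your bounded-domain route, once the Kato step is done correctly, bypasses that analysis entirely.
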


\begin{proof} (i) The proof follows a rather typical idea for estimating evolution in $L^1(\R^{N})$. See the arguments from \cite[Prop. 9.1]{Vlibro} in the isotropic case. Formally, we may proceed as follows. This will work for classical solutions.

\begin{lemma}\label{comparisboundedunbound} Suppose that $u_1$ and $u_2$ are classical  nonnegative solutions to \eqref{APM} defined in a bounded {or unbounded} spatial domain $\Omega$, with smooth boundary, living for a time interval $0\le t\le T$  and suppose that $u_{1}\le u_{2}$ on $\partial   \Omega$. Then the contraction result holds. If  we have \, $\int_{\Omega} (u_{0,1}-u_{0,2})_+\,dx=\infty$ there is no assertion.
\end{lemma}

\noindent {\sl Proof of the lemma.}
 Let $p=p(s)$  be a smooth approximation of the positive part of the sign function $\text{sign}(s)$, with $p(s)=0$ for $s\leq0$, $0\leq p(s)\leq 1$ for all $s\in\R$ and $p^{\prime}(s)\geq0$ for all $s\geq0$. Let us multiply \eqref{APM} by
\ $\varphi(x,t)=p(u_1-u_2)$ \ and integrate over $\Omega $, for each solution $u_{1}$, $u_{2}$. After subtracting the resulting equations,  we then have
\[
\int_{\Omega }(u_{1}-u_{2})_{t}\, p(u_{1}-u_{2})\,dx=\sum_{i=1}^N\int_{\Omega }(u_{1}^{m_i}-u_{2}^{m_{i}})_{x_i x_i\,}p(u_1-u_2)\,)dx.
\]
Letting now $p$ tend to $\text{sign}^{+}$ and observing that
\[
\frac{\partial}{\partial t}(u_{1}-u_{2})_{+}=\text{sign}^{+}(u_{1} - u_{2}) \frac{\partial }{\partial t}(u_{1}-u_{2}),
\]
we get after performing the time integration
\[
\frac{d}{dt}\int_{\Omega }(u_{1}(t)-u_{2}(t))_{+} \,dx=\sum_{i=1}^N\int_{\Omega }(u_{1}^{m_i}-u_{2}^{m_{i}})_{x_i x_i}\,\text{sign}^{+}(u_1-u_2)\,dx.
\]
Now, the well-known Kato's inequality implies that for all $i=1,\ldots,N$
\[
\partial_{x_{i}x_{i}}(u_{1}^{m_i}-u_{2}^{m_{i}})_{+}\geq
[\text{sign}^{+}(u_{1}^{m_i}-u_{2}^{m_{i}})](u_{1}^{m_i}-u_{2}^{m_{i}})_{x_ix_i}\,,
\]
  and we recall that \ $\text{sign}^{+}(u_{1}-u_{2})=\text{sign}^{+}(u_{1}^{m_i}-u_{2}^{m_{i}})$. Thus,
\begin{align}\label{8}
\frac{d}{dt}\int_{\Omega }(u_{1}(t)-u_{2}(t))_{+}\,dx& \leq\sum_{i=1}^N\int_{\Omega }\partial_{x_{i}x_{i}} (u_{1}^{m_i}-u_{2}^{m_{i}})_{+}\,dx
\nonumber
\\
&=\sum_{i=1}^N\int_{\partial\Omega }\partial_{x_{i}}(u_{1}^{m_i}-u_{2}^{m_{i}})_{+}\,\nu_i\,dx,\nonumber
\end{align}
where $\nu_i$ is the $i$.th component of the  external normal to $\partial\Omega$.
The last integrand is negative for all $i$ since $(u_{1}^{m_i}-u_{2}^{m_{i}})_{+}\ge0$ in $\Omega$ and it vanishes on the boundary.  We observe that
\[
\partial_{x_{i}}(u_{1}^{m_i}-u_{2}^{m_{i}})_{+}\,\nu_i= |\nu_{i}|^2\partial_{\nu}(u_{1}^{m_i}-u_{2}^{m_{i}})_{+}\le 0.
\]
 \ Therefore,
$$
\frac{d}{dt}\int_{\Omega }(u_{1}(t)-u_{2}(t))_{+}\,dx \le 0\,.
 $$
This ends the proof for classical solutions. In fact, we get for all $0\le s\le t$
\begin{equation}\label{L1_contr.st}
\int_{\Omega} (u_1(t)-u_2(t))_+\,dx\le \int_{\Omega} (u_1(s)-u_2(s))_+\,dx\,.
\end{equation}
\hskip 15cm\qed

\medskip

(ii) Using this lemma the result will be justified for the constructed weak solutions. Recalling the approximation procedure of Section \ref{ssec.approx} we will first prove the result for the solutions of the approximated problems ($P_{n,\varepsilon}$) posed in the ball $\Omega=B_n(0) $ with data $\overline u_{0,1}(x)=u_{0,1}(x)+\ve$ and $\overline u_{0,2}(x)=u_{0,2}(x)+\ve$. Recall that they are smooth solutions of equation \eqref{APM} with the same boundary data. Hence, by the previous result we have
for all $0\le s\le t$
\begin{equation*}
\int_{B_n(0) } (\overline{ u}_1(t)-\overline{ u}_2(t))_+\,dx\le \int_{B_n(0) } (\overline{ u}_1(s)-\overline{ u}_2(s))_+\,dx\,.
\end{equation*}
It is easy to justify that we can pass to the limit first in $n \to\infty$ and then in $\ve\to 0$ to get the same result
 \eqref{L1_contr} for the constructed solutions in $\R^N$.
\end{proof}

\begin{remark} {\rm To obtain this result we do not need the solutions to be the limit of classical solutions. In order to justify the first formal step (i) the solutions only have to be weak solutions with energy estimates and all involved derivatives in $L^1_{loc}(\Omega\times(0,T))$ with the equation satisfied a.e. in $Q=\Omega\times(0,T)$ so that Kato's result can be used. This is usually called a \sl strong solution\/.}\end{remark}

\begin{remark}
In the next section we shall need Lemma \ref{comparisboundedunbound} applied also to equation \eqref{APMs} in the self-similar variables, which is true. Moreover, the proof of  Lemma \ref{comparisboundedunbound} holds to compare a supersolution and a subsolution as well.
\end{remark}

\subsection{Theory in $L^1$. The $L^1$ to $L^\infty$ estimate }

The main step into a theory with unbounded data $u_0\in L^1(\R^N)$ is a result which is usually known as the $L^1$ to $L^\infty$ smoothing effect.

\begin{theorem}\label{L1LI} If $u_0\in L^1(\mathbb{R}^N)\cap L^\infty(\mathbb{R}^N)$, then the constructed solution $u$ to \eqref{APM}-\eqref{IC} under assumptions (H1) and (H2)   satisfies
 \begin{equation}\label{Linfty-L1}
 \|u(t)\|_\infty\leq C t^{-\alpha}\|u_0\|_1^{2\alpha/N}\quad \forall t>0,
 \end{equation}
where the exponent $\alpha$ is defined in \eqref{alfa} and $C=C(N,m_1,...,m_N)$.
 \end{theorem}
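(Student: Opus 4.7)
By the approximation scheme of Section \ref{ssec.approx}, I may assume that $u$ is a smooth, strictly positive solution in a bounded cylinder $Q_n$ and recover \eqref{Linfty-L1} in the limit through uniform estimates on the $u_\varepsilon$. The proof then proceeds by a Moser-type $L^p$-iteration combined with the scaling invariance \eqref{scal.tr}, which is needed to pin down the sharp exponents.

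Multiplying \eqref{APM} by $p\,u^{p-1}$ ($p\ge 2$) and integrating by parts over $\mathbb{R}^N$ yields the $L^p$ energy identity
\begin{equation*}
\frac{d}{dt}\int u^p\,dx = -\sum_{i=1}^N \frac{4\,p(p-1)\,m_i}{(p+m_i-1)^2}\int \bigl|\partial_{x_i}(u^{(p+m_i-1)/2})\bigr|^2\,dx.
\end{equation*}
Introducing the homogenized quantity $w = u^{(p+\overline{m}-1)/2}$ and applying the anisotropic Sobolev inequality of Troisi type,
\begin{equation*}
\|w\|_{L^{2N/(N-2)}(\mathbb{R}^N)} \le C_0 \prod_{i=1}^N \|\partial_{x_i} w\|_{L^2(\mathbb{R}^N)}^{1/N},
\end{equation*}
followed by an AM--GM step and interpolation against the conserved $L^1$-norm $\|u(t)\|_1=M$, one arrives at a Nash-type ODE
\begin{equation*}
Y_p'(t)\le -c_p\,M^{-\lambda_p}\,Y_p(t)^{1+\theta_p},\qquad Y_p(t)=\|u(t)\|_p^p,
\end{equation*}
whose integration and iteration along $p=p_k\to\infty$ (with explicit bookkeeping of the constants) yields the smoothing estimate
\begin{equation*}
\|u(t)\|_\infty \le C\,t^{-a}\,M^{b}
\end{equation*}
for some positive exponents $a,b$ depending only on $N$ and the $m_i$.

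The key obstacle is the mismatch between the dissipation, which controls the gradients of the \emph{different} powers $u^{(p+m_i-1)/2}$ along the different coordinate directions, and the Troisi inequality, which requires the gradients of a \emph{single} function. The natural remedy is the identity
\begin{equation*}
\partial_{x_i}\bigl(u^{(p+m_i-1)/2}\bigr) = \frac{p+m_i-1}{p+\overline{m}-1}\,u^{(m_i-\overline{m})/2}\,\partial_{x_i} w,
\end{equation*}
which reduces the discrepancy to the pointwise factors $u^{(m_i-\overline{m})/2}$. Under (H1) the exponents $(m_i-\overline{m})/2$ are bounded, so these factors can be absorbed at each iteration step by a running a priori $L^\infty$ bound obtained from the previous Moser step, or equivalently by a H\"older interpolation between $L^p$ and $L^1$ norms. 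Once the rough smoothing $\|u(t)\|_\infty \le C\,t^{-a}\,M^{b}$ has been established, I apply it to the rescaled solution $\widehat u(x,t)=k^\alpha u(k^{a_i}x_i,kt)$ from \eqref{scal.tr} and demand that the resulting bound be independent of the scaling parameter $k>0$; this forces $a=\alpha$ and $b=2\alpha/N$ by exactly the computation that fixed the self-similar exponents in Section \ref{ssc.sss}, yielding \eqref{Linfty-L1} with a constant $C=C(N,m_1,\ldots,m_N)$.
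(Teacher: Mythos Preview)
The paper does not prove this estimate from scratch: it simply quotes Theorem~1.2 of Song--Jian \cite{SJ06} for the case $\|u_0\|_1=1$ and then recovers the general dependence on $M$ via the scaling transformation \eqref{scal.tr}. Your closing scaling step is exactly that second half of the paper's argument, so the difference between your proposal and the paper lies entirely in your attempt to replace the citation by a self-contained Moser--Troisi iteration.

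That attempt has a genuine gap at precisely the point you identify as the key obstacle. Inverting your identity gives
\[
\partial_{x_i} w \;=\; \frac{p+\overline m-1}{p+m_i-1}\,u^{(\overline m-m_i)/2}\,\partial_{x_i}\bigl(u^{(p+m_i-1)/2}\bigr),
\]
so feeding $\|\partial_{x_i}w\|_{L^2}$ into the Troisi inequality requires controlling
$\int u^{\,\overline m-m_i}\,\bigl|\partial_{x_i}(u^{(p+m_i-1)/2})\bigr|^{2}\,dx$
by the dissipation. In the genuinely anisotropic case there is always at least one index with $m_i>\overline m$, and for such $i$ the weight $u^{\,\overline m-m_i}$ is a \emph{negative} power of $u$. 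A running $L^\infty$ bound from a previous Moser step, or a H\"older interpolation between $L^p$ and $L^1$, only controls \emph{positive} powers of $u$; a negative power calls for a uniform \emph{lower} bound on $u$, which is not available (the approximants $u_\varepsilon\ge\varepsilon$ give one, but the constant then blows up like $\varepsilon^{\,\overline m-m_i}$ as $\varepsilon\to0$). As written, the iteration therefore either fails to close or produces a bound depending on quantities beyond $M$, and the final scaling step cannot repair the exponents because there is no universal power-law estimate to rescale.

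If you want a self-contained argument you must avoid forcing all directions through a single function $w$; one route is to use an anisotropic embedding tailored to the family $\{u^{(p+m_i-1)/2}\}_i$ directly (this is closer to what \cite{SJ06} actually does). Otherwise, the honest proof here is the paper's: cite \cite{SJ06} for mass one and scale.
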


   This important result can be found in \cite{SJ06}. For our  proof see the Appendix.

\medskip

\noindent {\bf Definition of solution with $L^1$ data.} Once we know  bound \eqref{Linfty-L1} with a right-hand side that does not involve the $L^\infty$ norm, we may construct the solution of equation \eqref{APM} in $Q=\R^{N}\times (0,\infty)$ with initial data $u_0\in L^1(\ren)$ as the monotone limit of the approximate solutions $u_k$ with initial data $u_{0,k}$, $k\ge 1$ such that
$u_{0,k}=\min\{u_0(x),k\}.$  Then,
 \begin{equation}\label{L1.def}
u(x,t)=\lim_{k\to\infty} u_k(x,t).
 \end{equation}

\begin{theorem}\label{L1LIbis} The solution defined in \eqref{L1.def} satisfies the conditions of Theorem \ref{EUWES}.
 \end{theorem}

 \begin{proof} The limit is obtained as a non-decreasing limit of bounded functions in $C([0,\infty]:L^1(\R^N))$, hence it belongs to the same class. We leave the rest of the details to the reader or see  \cite{SJ06}.  \end{proof}

\medskip

\begin{remark}From Proposition \ref{decay of the $L^p$} and Theorem \ref{L1LI} we have that for $u_0\in L^1\cap L^\infty$, the rescaled evolution solution $v$ of \eqref{NewVariables} is uniformly bounded in time: indeed, for a fixed $\tau_{1}>0$, Theorem \ref{L1LI} implies, for $\tau>\tau_{1}$,
\[
|v(y,\tau)|\leq C(\tau_1)\|u_{0}\|_{1}^{2\alpha/N}
\]
while Proposition \ref{decay of the $L^p$} yields, for $\tau\leq \tau_{1}$,
\[
|v(y,\tau)|\leq C_{1}(\tau_1)\|u_{0}\|_{\infty}.
\]
We will take into account the  dependence of the constants on $\tau_1$ in what follows.
\end{remark}


\subsection{Scaling}\label{sec scaling} Equation \eqref{APM} is invariant under the scaling transformation
\begin{equation}\label{scal.tr}
\widehat u(x,t)=k^{\alpha}u( k^{a_1}x_1,\cdots,  k^{a_N}x_N  ,kt), \quad k>0,
\end{equation}
assuming that \eqref{ab} holds.
 This is of course related to self-similarity. But we can have other choices {different from \eqref{alfa} and \eqref{ai}}. Suppose we put $a_i=\alpha \omega_i$ and
$$
\omega_i(c)= \frac{c}{N}+\frac{\overline m-m_i}2
$$
for some $c>0$. Then $\sum_i \omega_i(c)= c$ and we can get
$$
\alpha(c) =\frac{1}{\overline m -1+ (2c/N)}
$$
For $c= 1$ we retrieve the old scaling exponents that conserve mass {(see \eqref{alfa} and \eqref{ai})}. Indeed,  conservation of mass does not hold unless $c=1$ since
$$
{M(\widehat u):=}\int_{{\mathbb{R}^N}} \widehat u(t)\,dx= k^{\alpha(c)[1-\sum_i \omega_i(c)]}\int_{{\mathbb{R}^N}} u(kt)\,dx,
$$
hence, $M(\widehat u)=k^{\alpha(c)(1-c)}M(u)$.

\medskip

\noindent $\bullet$ {\bf Scaling for the stationary equation}.
The following transformation changes (super) solutions into new (super) solutions of the stationary equation \eqref{StatEq} and it also changes the mass. We put
\begin{equation}\label{Tk}
{\mathcal T}_k F(y)=F_k(y)=k F(k^{\nu_i}y_i)\,.
\end{equation}
The equation is invariant under this transformation  if  \ $m_i +2\nu_i= 1$ for all $i$, hence $\nu_i=(1-m_i)/2$. Note that this changes the mass ({or the $L^1$ norm})
\begin{equation}\label{Change mass}
\int_{{\mathbb{R}^N}} F_k(y)dy= k\int_{{\mathbb{R}^N}} F(y_i\,k^{\nu_i})\,dy=k^{\beta} \,\int_{{{\mathbb{R}^N}}} F(z)\,dz
\end{equation}
where
\begin{equation}\label{beta}
\beta=1-\sum_i\nu_i=1-N(1-\overline {m})/2=\frac{N}{2}(\overline {m}-m_c)\in (0,1).
\end{equation}
Changing $ F_1=\overline{F}$ into the rescaled version
${\mathcal T}_k F_1$ we can make ${\mathcal T}_k r$ (where $r$ is the radius of the anisotropic domain) as small as we want, and both the mass and the maximum of  ${\mathcal T}_k F_1$ will grow  according to the rates $k^\beta$ and $k$ respectively.  This transformation will be used in the sequel to reduce the calculations with self-similar solutions to the case of mass $M=1$.

\medskip

\section{Anisotropic upper barrier construction}\label{sec.upp}
We first observe that our hypotheses (H1), (H2) and formula \eqref{ai} guarantee that
\begin{equation}\label{33}
\frac{1}{\sigma_i}<\frac{2}{1-m_i}.
\end{equation}

The construction of an upper barrier in an outer domain will play a key role in the proof of existence of the  self-similar   fundamental solution in Section \ref{sec.ex.ssfs}.

\begin{proposition}\label{P11}
 Let us assume $m_i<1$ for all i.    The function
\begin{equation}\label{outer.barr bis}
\overline{F}(y)= \left( \sum_{i=1}^N \gamma_i|y_i|^{\frac{2}{1-m_i}} \right)^{-1}
\end{equation}
with
\begin{equation}\label{gammai}
 0<   \gamma_i
\leq\left[\frac{\alpha}{N}\left(\min_i\{\sigma_i\frac{2}{1-m_i}\}-1\right)\frac{(1-m_i)^2}{4m_i(m_i+1)}
\right]^{\frac{1}{1-m_i}}
\end{equation}
is a weak supersolution to \eqref{StatEq} in ${\mathbb{R}^N\setminus B_R(0)}$ and a classical supersolution  in $\mathbb{R}^N\setminus\left\{0\right\}$, with $B_R(0)$ being a  any   ball of radius $R>0$. Moreover,   $\overline{F}\in L^p(\mathbb{R}^N\setminus B_R(0))$ for every $p\geq1$.
\end{proposition}

We say that $\overline{F}$ is a weak (energy) supersolution to \eqref{StatEq} in ${\mathbb{R}^N\setminus B_R(0)}$ if $\overline{F}\in L^2 (\mathbb{R}^N\setminus B_R(0))$,
$(\overline{F}^{m_{i}})_{y_{i}}\in L^2(\mathbb{R}^N\setminus B_R(0))$ for all $i=1,...,N$ and the following inequality holds
\[
\sum_{i=1}^{N}\int_{\mathbb{R}^N\setminus B_R(0)}\left[(\overline{F}^{m_i})_{y_i}\varphi_{y_{i}}+\alpha \sigma_{i}  y_{i} \overline{F}\, \varphi_{y_i}\right]dy\geq 0
\]
for all the nonnegative functions $\varphi\in C_{c}(\mathbb{R}^N\setminus B_R(0))$.  A classical supersolution occurs when
$$
\sum_{i=1}^{N}\left[(F^{m_i})_{y_iy_i}+\alpha \sigma_i\left( y_i F\right)_{y_i}
\right] \le 0
$$
everywhere in the chosen domain. The opposite sings apply to subsolutions. Similar definitions  hold    for the parabolic problem.

\medskip

We need the following technical lemma (see \cite[Lemma 2.2]{SJ05} for the proof):

\begin{lemma}
\label{Lemma Song}
Let $\alpha>0$ and $\vartheta_i>0$ for all $i=1,\cdots,N$ such that $\sum\left(\vartheta_i\alpha\right)^{-1}<1$. Then the function
$$\Upsilon(y)= \left( \sum_{i=1}^N \gamma_i|y_i|^{\vartheta_i} \right)^{-\alpha}$$
belongs to $L^1(\mathbb{R}^N\setminus B_R(0))$ for every $R>0$.
\end{lemma}

\noindent {\sl Proof of Proposition \ref{P11}}.
 We observe that Lemma \ref{Lemma Song} guarantees the summability of $\overline{F}$ outside any ball centered at the origin.

 Denoting $X=\sum_{j=1}^N\gamma_j|y_j|^{2/(1-m_j)}$, for $y\in\mathbb{R}^N\setminus\cup_{i=1}^N\{y\in \mathbb{R}^N:y_i=0\}$  and stressing that $2/(1-m_i)\geq2$  we have
\begin{equation*}
\begin{split}
I:=&\sum_{i=1}^{N}\left[(\overline{F}^{m_i})_{y_iy_i}+\alpha\sigma_i\left( y_i \overline{F}\right)_{y_i}
\right]
\\
&\leq
\sum_{i=1}^{N} 4 m_i(m_i+1)\left(\frac{\gamma_i}{1-m_i}\right)^{2}X^{-(m_i+2)}|y_i|^{2\frac{1+m_i}{1-m_i}}
+\alpha X^{-1}-2\alpha X^{-2}\sum_{i=1}^{N} \frac{\sigma_i\gamma_i}{1-m_i}|y_i|^{\frac{2}{1-m_i}}
\\
&=X^{-1}\left[
\sum_{i=1}^{N}  4m_i(m_i+1)\left(\frac{\gamma_i}{1-m_i}\right)^{2}X^{-(m_i+1)}|y_i|^{2\frac{1+m_i}{1-m_i}}
+\alpha -2\alpha X^{-1}\sum_{i=1}^{N}\frac{ \sigma_i\gamma_i}{1-m_i}|y_i|^{\frac{2}{1-m_i}}
\right]
\\
&\leq
X^{-1}\left[
\sum_{i=1}^{N} 4 m_i(m_i+1)\left(\frac{\gamma_i}{1-m_i}\right)^{2}X^{-(m_i+1)}|y_i|^{2\frac{1+m_i}{1-m_i}}
+\alpha\left(1-\min_i\{\sigma_i\frac{2}{1-m_i}\}\right)
\right]
\end{split}
\end{equation*}
Since for every $i$ we have $$\gamma_i|y_i|^{2/(1-m_i)}\leq\sum_{j=1}^N\gamma_j|y_j|^{2/(1-m_j)}=X,$$ it follows that $$
|y_i|^{2(1+m_i)/(1-m_i)}\leq
X^{(m_i+1)}\gamma_i^{-(m_i+1)},$$ then
\begin{equation*}
I
\leq
X^{-1}
\sum_{i=1}^{N} \left[4m_i(m_i+1)\left(\frac{\gamma_i}{1-m_i}\right)^{2}\gamma_i^{-(m_i+1)}
+\frac{\alpha}{N}\left(1-\min_i\{\sigma_i\frac{2}{1-m_i}\}\right)\right],
\end{equation*}
where $1-\min_i\{\sigma_i\frac{2}{1-m_i}\}<0$ by \eqref{33}. In order to conclude that $I\leq0$ it is enough to show that
\begin{equation*}
4m_i(m_i+1)\left(\frac{\gamma_i}{1-m_i}\right)^{2}\gamma_i^{-(m_i+1)}
+\frac{\alpha}{N}\left(1-\min_i\{\sigma_i\frac{2}{1-m_i}\}\right)\leq0
\end{equation*}
for every $i=1,..,N$, i.e. \eqref{gammai}. It is easy to check that computations works  for $y\in\mathbb{R}^N\setminus\left\{0\right\}$. Finally, we stress that
$({\overline{F}^{m_i}})_{y_i}\in L^{2}(\mathbb{R}^N\setminus B_R(0))$ with $R>0$
and then we can easily conclude that $\overline{F}$ is a weak supersolution as well.
\qed

\medskip


\noindent $\bullet$ {\bf Scaling for $\bar F$}. The change of mass \eqref{Tk} gives
\begin{equation}\label{Change mass}
{{
\int_{\mathbb{R}^N\setminus {\mathcal T}_k B_r(0)} F_k(y)dy= k\int_{\mathbb{R}^N\setminus {\mathcal T}_k B_r(0)} F(y_i\,k^{\nu_i})\,dy=k^{\beta} \,\int_{\mathbb{R}^N\setminus B_r(0)} F(z)\,dz\,
}}
\end{equation}
for any $r>0$,  where $\beta$ is given by \eqref{beta}   and
$${\mathcal T}_k B_r(0)=
\{y:(k^{\nu_1}y_1,\cdots,k^{\nu_N}y_N)\in B_r(0)\}.
 $$
 We will replace $\overline{F}$ with the rescaled version ${\mathcal T}_k \overline{F}$ with some large $k$,  and both the mass and the maximum of  ${\mathcal T}_k \overline{F}$ will grow  according to the rates $k^\beta$ and $k$ respectively. Moreover, it is easy to check that the following property holds:

\begin{lemma}\label{monot TK}If $0<k_1<k_2$, then ${\mathcal T}_{k_1}\overline{F}(y)<{\mathcal T}_{k_2}\overline{F}(y)$ in the common domain, where $\overline{F}$ is given by \eqref{outer.barr bis}.
\end{lemma}

\begin{remark}
We stress that  we can replace the ball $B_r(0)$ with an  \textquotedblleft anisotropic ball\textquotedblright $\,\,\widetilde{B}_r(0)=\{y:\sum_{i=1}^N\gamma_i|y_i|^{\frac{2}{1-m_i}}<r\}$, obtaining that $\overline{F}$ is a weak supersolution to \eqref{StatEq} in $\mathbb{R}^N\setminus {\widetilde{B}_r(0)}$.  

\end{remark}

\subsection{Upper comparison}\label{ssec.uc}

We are ready to prove a comparison theorem that is needed in the proof of existence of the self-similar fundamental solution.  We rely on the construction of a suitable upper barrier.
We take as first candidate a suitable rescaled  version of the function $\overline{F}(y)$ given in \eqref{outer.barr bis}
according to formula  \eqref{Tk}, i.e., $\mathcal{T}_k\overline{F}(y)$. For fixed $r>0$ and $k>0$ we denote the rescaled set of $\widetilde{B}_r(0)=\{y:\sum_{i=1}^N\gamma_i|y_i|^{\frac{2}{1-m_i}}<r\}$ by
\begin{equation}\label{Tk Omega}
\mathcal{T}_k\widetilde{B}_r(0)=\left\{y \in\mathbb{R}^N:(k^{\gamma_1}y_1,\cdots,k^{\gamma_N}y_N )\in \widetilde{B}_r(0) \right\}.
\end{equation}
In order to have a bounded global barrier $G_k$,
we define $G_k$ in $\mathcal{T}_k\widetilde{B}_r(0)$ as a constant equal to the value
$\mathcal{T}_k \overline{F}$ takes at the boundary of $\mathcal{T}_k\widetilde{B}_r(0)$. Thus, for fixed $r>0$ and $k>0$ we define
\begin{equation}\label{G_k}
G_{k}(y)=\min\{
\mathcal{T}_{k}\overline{F}(y),
\max_{\mathbb{R}^N\setminus\mathcal{T}_{k}\widetilde{B}_r(0)}\mathcal{T}_{k}\overline{F}(y)\}.
\end{equation}
Recall that $\mathcal{T}_k$ is defined in \eqref{Tk} and $\overline{F}$ is given in \eqref{outer.barr bis}.

The following comparison result is stated in terms of the solutions $v$ of rescaled equation \eqref{APMs}. We recall that the relation between $u$ and $v$ is given by \eqref{NewVariables} and the equation is invariant under time shift $t_0$. We also recall that $\tau_0=\log t_0$  (for every $t_0\in\mathbb{R}$) is the initial time for the Cauchy problem for \eqref{APMs}, \textit{i.e.} $v(y,\tau_0)=v_0(y)$.

\begin{theorem}[Barrier comparison]\label{thm.barr} Let $v$ be a solution of \eqref{APMs} with a nonnegative initial datum $v(y,\tau_0)=v_0(y)\in L^1(\mathbb{R}^N)\cap L^\infty(\mathbb{R}^N)$ such that
\begin{itemize}
\item[(i)] $v_0(y)\leq L_1$ a.e. in $\mathbb{R}^N$
\item[(ii)]$\int v_0(y) \,dy\leq M$,
\end{itemize}
where  $M>0$ and $L_1>0$. Then there exists $k$ large enough such that
$$
v_0(y)\leq G_k(y) \qquad \mbox{ a.e in } \mathbb{R}^N\setminus\mathcal{T}_{k}\widetilde{B}_r(0)
$$
implies
\begin{equation}\label{outer.comp}
v(y,\tau)\le G_k(y)  \quad \mbox{ for } \ y\in \ren, \ \tau>\tau_0,
\end{equation}
\end{theorem}

\begin{figure}[t!]
 \centering
 \vspace{-3.0cm}
\includegraphics[scale=1.3]{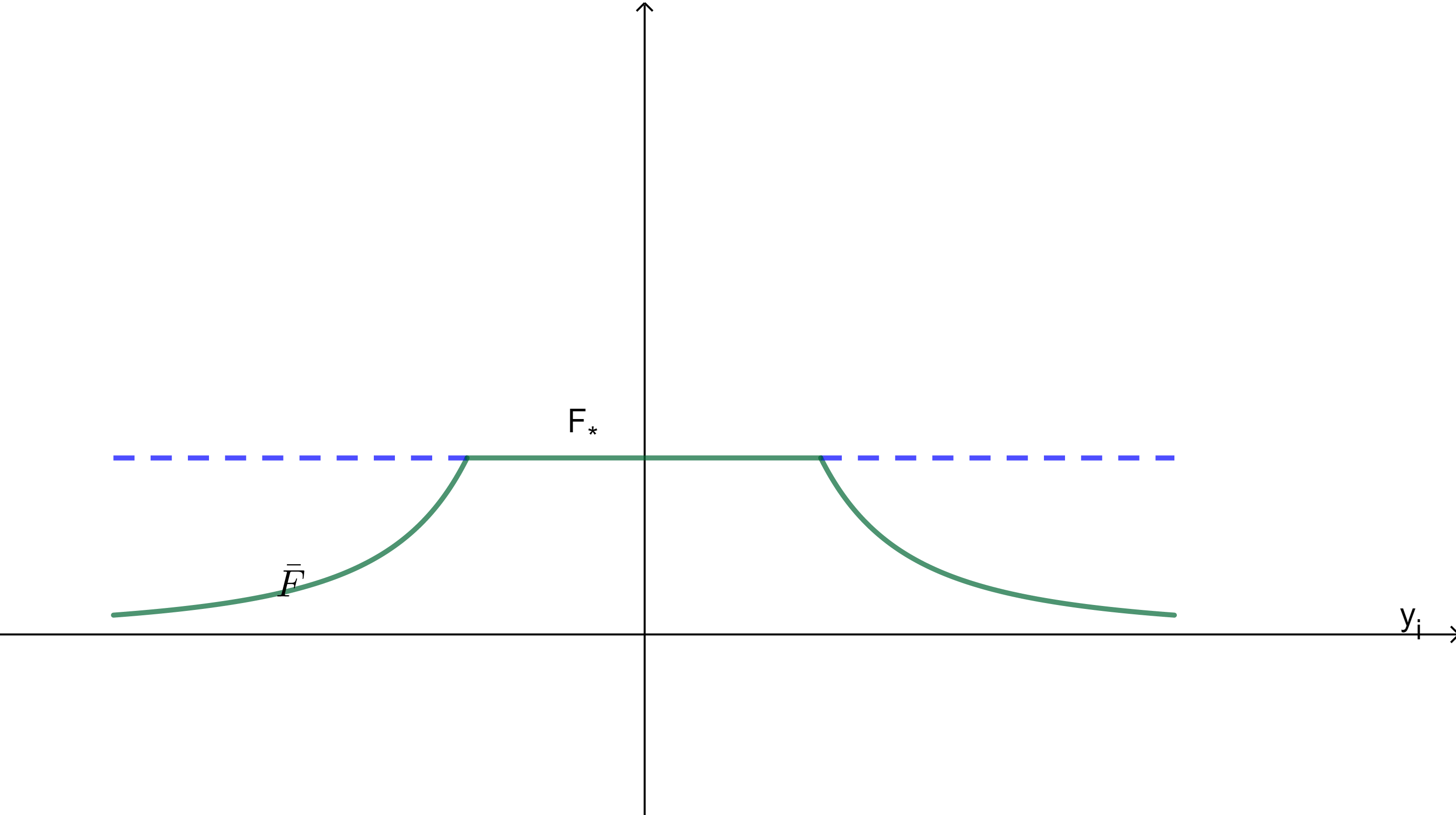}
 \caption{The barrier for $y_j=0,\forall j\neq i$ and $k=1$}
  \label{fig:boat2}
\end{figure}

\noindent {\sl Proof.} (a) Without loss of generality we fix $t_0=1$ and then $\tau_0=0$. Let us pick some $\tau_1>0$ to consider first the time $\tau \geq \tau_1$ and later the interval $[0,\tau_1)$. We denote by $F_*=\max\{\mathcal{T}_k\overline{F}(y): y\in \mathbb{R}^N\setminus\mathcal{T}_k\widetilde{B}_r(0)\}$ and we choose $k\geq 1$ such that
 \begin{equation}\label{scelta k}
\max\left\{L_1 e^{\alpha\tau_1}, C M^{2\alpha/N}(1-e^{-\tau_1})^{-\alpha}\right\}\leq F_*\equiv {{ \max_{\mathbb{R}^N\setminus \mathcal{T}_{k}\widetilde{B}_r(0)}\mathcal{T}_{k}\overline{F}(y)}}.
\end{equation}
Using the smoothing effect \eqref{Linfty-L1} and the scaling transformation \eqref{NewVariables}, we get that
\begin{equation}\label{77}
v(y,\tau)= (t+1)^{\alpha}u(x,t)\le C M^{2\alpha/N}((t+1)/t)^{\alpha}= C M^{2\alpha/N}(1-e^{-\tau})^{-\alpha},
\end{equation}
where $C$ is the constant that appears in \eqref{Linfty-L1}. By \eqref{scelta k} we have $\|v(\tau)\|_\infty\le F_*$ for all $\tau\ge \tau_1$.

(b) For $0\leq\tau<\tau_1 $ we argue as follows: from $v_0(y)\le L_1$  we get $u_0(x)\le L_1$, so by Proposition \ref{decay of the $L^p$} we have  $u(x,t)\le L_1$, therefore by \eqref{scelta k}
$$
\|v(\tau)\|_\infty \le L_1 (t+1)^{\alpha}= L_1e^{\alpha\tau}\leq F_{\ast}.
$$

(c) Under this choice we get $\|v(\tau)\|_\infty\le F_*$  for every $\tau>0$, which gives a comparison between  $v(y,\tau) $ with $G_k(y)$ in the interior cylinder $Q_{int}:={{\mathcal{T}_{k}\widetilde{B}_r(0)}}\times (0,\infty)$. In the outer cylinder $Q_{ext}:= (\mathbb{R}^N \setminus \mathcal{T}_{k}\widetilde{B}_r(0)) \times (0,\infty)$ we use the comparison principle for the $v$ variable as in Lemma \ref{comparisboundedunbound} which applies for solutions and supersolutions defined in $Q_{ext}$ and ordered on the parabolic boundary, which consists of the initial time border and the lateral border. We conclude that
$$
v(y,\tau)\leq {{\mathcal{T}_k\overline{F}}(y)} \quad \mbox{ for } \ y\in {{\mathbb{R}^N\setminus\mathcal{T}_{k}\widetilde{B}_r(0)}}, \ \tau>0,
$$
using Lemma \ref{monot TK}. The comparison for $y\in{{\mathcal{T}_{k}\widetilde{B}_r(0)}}$ has been already proved, hence the result \eqref{outer.comp}. \qed

\begin{remark}\label{remark reg}
Note that when $v_{0}\in C_{c}^{\infty}(\R^{N})$ there exists an integral bounded barrier  depending only on $L_1$ and $M$. The existence of some integrable barrier is essential to prove that the solution constructed in Section \ref{ssec.approx} is in $C([0,\infty): L^1(\mathbb{R}^N))$, see an instance in the proof of \cite[Theorem 1.2]{SJ06}.  See also subsequent sections here, where the accurateness of the asymptotic behavior as $|x|\to \infty $ plays an important role.
\end{remark}


\section{Aleksandrov's reflection principle}\label{ALEKSAND}

This is an auxiliary section used in the proof of Aleksandrov's symmetry principle so we will skip unneeded generality.
Let $H_j^+=\{x\in \mathbb{R}^N:x_j>0\}$  be the positive half-space with respect to the $x_j$ coordinate for any fixed $j\in\{1,\cdots,N\}$.  Its boundary is the hyperplane $H_j=\{x_j=0\}$  that divides $\mathbb{R}^N$ into the half spaces $H_j^+=\{x_j>0\}$ and $H_j^-=\{x_j<0\}$. We denote by $\pi_{H_j}$ the specular symmetry that maps a point $x\in H_j^+$  into $\hat x= \pi_{H_j}(x)\in H_j^-$, its symmetric image with respect to $H_j$, where $\widetilde x_j= -x_j$, $\widetilde x_i=x_i$ for $i\ne j$. We have the following important result:

\begin{proposition}\label{Prop Al}
Let $u$  be   a  nonnegative solution of the Cauchy problem for \eqref{APM} with
nonnegative initial data $u_0\in L^1(\mathbb{R}^N)$.
If  for a given hyperplane $H_j$ with  $j=1,\cdots,N$ we have
$$u_0(\pi_{H_j}(x))\leq u_0(x)\, \text{ for all }x\in H{_j}^{+}$$
then 
$$
u(\pi_{H_j}(x),t)\leq u(x,t)\quad \text{ for all }(x,t)\in H_{j}^{+} \times  (0,\infty).
$$
\end{proposition}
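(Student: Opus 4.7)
The plan is to exploit the reflection symmetry of equation \eqref{APM} in the coordinate direction $x_j$ together with the $L^1$-contraction principle on a half-space provided by Proposition \ref{Prop ComU}. The observation that makes the argument go through is that, although \eqref{APM} is not invariant under general rotations, it is invariant under each coordinate reflection $x_j\mapsto -x_j$, because each term $(u^{m_i})_{x_ix_i}$ is a pure second derivative and $\sigma_j^{2}=1$.

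Concretely, I would define the reflected function
\[
\widetilde u(x,t):=u(\pi_{H_j}(x),t),\qquad (x,t)\in H_j^{+}\times(0,\infty).
\]
Because of the reflection invariance just mentioned, $\widetilde u$ is itself a nonnegative solution of \eqref{APM} on $H_j^{+}\times(0,\infty)$. Its initial datum is $\widetilde u(x,0)=u_{0}(\pi_{H_j}(x))$, which by assumption satisfies $\widetilde u(x,0)\le u_{0}(x)=u(x,0)$ for every $x\in H_j^{+}$. On the boundary $H_j=\{x_j=0\}$ the reflection acts as the identity, so $\widetilde u(x,t)=u(x,t)$ for $x\in H_j$, $t>0$. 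Hence $u$ and $\widetilde u$, viewed as solutions on $U=H_j^{+}$, solve the Cauchy-Dirichlet problem \eqref{APM U} with the same boundary trace $h(x,t):=u(x,t)\big|_{H_j}$ and with ordered initial data $\widetilde u(\cdot,0)\le u(\cdot,0)$. An application of Proposition \ref{Prop ComU} then yields $\widetilde u(x,t)\le u(x,t)$ on $H_j^{+}\times(0,\infty)$, which is precisely the claimed inequality.

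The main obstacle is of a technical nature rather than conceptual: Proposition \ref{Prop ComU} is formulated for positive smooth solutions with continuous initial and boundary data, whereas here the boundary datum is not prescribed a priori but inherited from the trace of $u$ on $H_j$, and the solutions are only weak. To justify this rigorously I would invoke the approximation scheme of Subsection \ref{ssec.approx}: replace $u_{0}$ by a monotone sequence of smooth, strictly positive approximants $u_{0,n,\varepsilon}$ on the ball $B_n(0)$ that still satisfy the half-space ordering $u_{0,n,\varepsilon}(\pi_{H_j}(x))\le u_{0,n,\varepsilon}(x)$ for $x\in H_j^{+}\cap B_{n}(0)$ (which is automatic if one symmetrically reflects the construction inside $B_n$). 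Each approximate problem is uniformly parabolic and its solution $u_{n,\varepsilon}$ is classical, so the reflected function $\widetilde u_{n,\varepsilon}$ is classical too and Proposition \ref{Prop ComU} applies directly on $B_{n}(0)\cap H_j^{+}$ with the common trace from $u_{n,\varepsilon}$ on $H_j$. Passing to the monotone limits $\varepsilon\to 0$ and $n\to\infty$, using the stability of weak solutions and $L^1$-convergence recorded in Subsection \ref{ssec.approx}, the pointwise inequality $\widetilde u\le u$ is preserved, finishing the proof.
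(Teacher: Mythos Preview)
Your proposal is correct and follows essentially the same route as the paper: define the reflected function $\widetilde u(x,t)=u(\pi_{H_j}(x),t)$, observe it solves \eqref{APM} with ordered initial data and identical boundary trace on $H_j$, and apply Proposition~\ref{Prop ComU} on the half-space $H_j^{+}$. The paper's proof is terser about the approximation step (it simply invokes Subsection~\ref{ssec.approx} to assume smoothness), but the argument is the same.
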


\begin{proof}
We first observe that if $u(x,t)$ is a solution to Cauchy problem with initial datum $u_0(x)$, then $\widetilde{u}(x,t)=u(\pi_{H_j}(x),t)$ is a solution to Cauchy problem with initial datum $u_0(\pi_{H_j}(x))$.
 By approximation we may assume that the solutions are
continuous and even smooth, and continuous at $t = 0$ as explained in Subsection \ref{ssec.approx}.
We consider in $Q^+ =H_j^+\times(0,+\infty)$ the
solution $u_1(x,t) = u(x,t)$ and a second solution
$u_2(x, t) = u(\pi_{H_j}(x), t)$. Our aim is to show that
\[
u_2(x,t)\leq u_1(x,t)\quad \text{ for all }(x,t)\in H_{j}^{+} \times  (0,\infty).
\]
By assumption the initial values satisfy $u_2(x, 0)\leq u_1(x, 0)$  and the boundary values on $\partial H_j^+\times[0,+\infty)$ are the same. Then Lemma \ref{comparisboundedunbound} yields the assertion.
\end{proof}

We now introduce the concept of \sl separately symmetric and nonincreasing function\rm, SSNI. Precisely, a function $g:\mathbb{R}^N\rightarrow\mathbb{R}$ is \ SSNI if it is a  symmetric function in each variable $x_i$ and a nonincreasing function in $|x_i|$ for all $i$,\textit{ i.e.}
\begin{equation}\label{simm}
 g(x_1,\cdots,x_N)=g(|x_1|,\cdots,|x_N|) \quad \forall x \in \mathbb{R}^N,
\end{equation}
and for all $j=1,\cdots,N$
\begin{equation}\label{mon}
g(|x_1|,\cdots,|x_j| ,\cdots,|x_N|)\leq g(|x_1|,\cdots,|\widehat{x}_j| ,\cdots,|x_N|) \quad \text{ if }|\widehat{x}_j|\leq |x_j|.
\end{equation}
We say that the evolution function $u(x,t)$ is  \ SNNI if it is an \ SSNI function with respect to the space variable for all $t>0$.
The next result states the conservation in time of the SSNI property under the AFDE flow.

\begin{proposition}\label{Prop 3}
Let $u$ be a  nonnegative solution of the Cauchy problem for \eqref{APM} with nonnegative
initial data $u_0\in L^1(\mathbb{R}^N)$. If $u_0$ is a  symmetric function in each variable $x_i$,  and also a  nonincreasing   function in $|x_i|$ for all $i$, then $u(x,t)$  is also symmetric and a nonincreasing function in $|x_i|$ for all $i$ for all $t$.
\end{proposition}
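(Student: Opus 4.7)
The plan is to deduce both assertions from the reflection principle of Proposition \ref{Prop Al}, applied first to the coordinate hyperplane $\{x_i=0\}$ to get symmetry and then to the translated hyperplanes $\{x_i=h\}$, $h>0$, to get monotonicity. Using the approximation scheme of Subsection \ref{ssec.approx} we may assume throughout that our solutions are smooth and strictly positive, so that Proposition \ref{Prop ComU} applies on half-spaces without qualification; both symmetry and monotonicity then pass to the limit.

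For the symmetry, fix $i\in\{1,\dots,N\}$ and set $H_i=\{x_i=0\}$. Because $u_0$ is symmetric in $x_i$, the identity $u_0(\pi_{H_i}(x))=u_0(x)$ makes the inequality $u_0(\pi_{H_i}(x))\le u_0(x)$ hold trivially both on $H_i^+$ and on $H_i^-$. Applying Proposition \ref{Prop Al} on each half-space in turn (the proof does not distinguish between the two sides of the hyperplane) we obtain $u(\pi_{H_i}(x),t)\le u(x,t)$ on all of $\mathbb{R}^N\times(0,\infty)$, that is, $u(\pi_{H_i}(x),t)=u(x,t)$. Iterating over $i$ gives the symmetry of $u(\cdot,t)$ in each of its variables.

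For the monotonicity, observe first that Proposition \ref{Prop Al} remains valid for every translated hyperplane $H_i^h=\{x_i=h\}$, $h\in\mathbb{R}$: its proof appeals only to the $L^1$-comparison of Proposition \ref{Prop ComU} on the half-space $\{x_i>h\}$, and since $\pi_{H_i^h}$ fixes $\partial U$ pointwise the Dirichlet data of $u_1(x,t)=u(x,t)$ and $u_2(x,t)=u(\pi_{H_i^h}(x),t)$ match on $\partial U\times(0,\infty)$. Granted this extension, pick any $a>b\ge 0$ and set $h=(a+b)/2>0$. For every $x$ with $x_i=a$ one has $x\in (H_i^h)^+$ and $\pi_{H_i^h}(x)$ has $i$-th coordinate $2h-a=b$; moreover a short case analysis ($x_i\in(h,2h]$ or $x_i>2h$) shows $|2h-x_i|<|x_i|$ throughout $(H_i^h)^+$, so the hypothesis that $u_0$ is decreasing in $|x_i|$ yields $u_0(x)\le u_0(\pi_{H_i^h}(x))$ there. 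Applying the extended reflection principle with the roles of the two half-spaces exchanged gives $u(x,t)\le u(\pi_{H_i^h}(x),t)$ on $(H_i^h)^+\times(0,\infty)$, which specialised at $x_i=a$ reads $u(\dots,a,\dots,t)\le u(\dots,b,\dots,t)$. Combined with the symmetry already established, this is exactly the required monotonicity in $|x_i|$. The only mildly delicate step is the extension of Proposition \ref{Prop Al} to translated hyperplanes, which is essentially bookkeeping; everything else is a clean application of $L^1$-comparison on a half-space.
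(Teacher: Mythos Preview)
Your proof is correct and follows essentially the same route as the paper: symmetry from Proposition \ref{Prop Al} at the coordinate hyperplane, then monotonicity from a half-space comparison via Proposition \ref{Prop ComU}. The only cosmetic difference is that for the monotonicity step the paper compares $u$ with its translate $\widehat u(x,t)=u(x_1,\dots,x_i+h,\dots,x_N,t)$ on $H_i^+$, whereas you reflect across the shifted hyperplane $\{x_i=h\}$; once symmetry in $x_i$ is in hand these are the same manoeuvre, and your formulation has the advantage of making the matching of the Dirichlet data on the half-space boundary explicit.
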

\begin{proof}
By Proposition \ref{Prop Al} the solution $u(x,t)$ is a function in $|x_i|$. Finally, Lemma \ref{comparisboundedunbound} applied in $H_i^+$, to $u(x,t)$ and to $\widehat{u}(x,t)=u(x_1,\cdots,x_i+h,\cdots,x_N,t)$ yields that $u$ is a nonincreasing function in $|x_i|$.
\end{proof}

\section{A quantitative positivity lemma}\label{sec.qual}

 As a consequence of mass conservation and the existence of the upper barrier we obtain a positivity lemma for certain solutions of the equation. This is the uniform positivity that is needed in the proof of existence of self-similar solutions, it avoids the  fixed point from being trivial. A similar but simpler barrier construction was done in  \cite{Vaz20} where radial symmetry was available.
 It is convenient to use the rescaled variable $v$ of \eqref{NewVariables} instead of $u$.

\begin{lemma}\label{lem.pos}
Let $v$ be the solution of the rescaled equation \eqref{APMs} with a nonnegative SSNI initial datum $v_0\in L^\infty(\ren)\cap L^1(\ren)$ with mass $M>0$, such that $v_{0}\leq  G_k$ a.e. in
${ {\mathbb{R}^N\setminus\mathcal{T}_k \widetilde{B}_r(0)}}$, where $G_{k}$ is a suitable barrier defined as in \eqref{G_k} and $\mathcal{T}_k  {\widetilde{B}_r(0)}$ is defined in \eqref{Tk Omega}. Then, there is a continuous nonnegative function $\zeta(y)$, positive in a ball of radius $r_0>0$, such that
\begin{equation*}
v(y,\tau)\ge  \zeta(y) \quad \mbox{ for all } \ y\in \ren, \ \tau>0.
\end{equation*}
In particular, we may take $\zeta(y)\ge c_1>0$ in $ B_{r_0}(0)$ for suitable $r_0$ and $c_1>0$, to be computed  below.
\end{lemma}

\noindent {\sl Proof.} We know that for every $\tau>0$ the solution $v(\cdot,t)$ will be nonnegative, and also by the previous section it is SSNI. By Theorem \ref{thm.barr} there is an upper barrier $G(y)=G_k(y)$  for $v(y,\tau)$ for every $\tau$.
 We stress that we need $v_0$ to be below a suitable barrier to use Theorem \ref{thm.barr}.
 Since $G$ is integrable, there is a large box $Q=\{y: |y_i|\le R\}$  such that in the outer set  $O=\mathbb{R}^n\setminus Q$ we have a small mass:
$$
\int_{O} v(y,\tau)\,dy\le
\int_{O} G(y)\,dy< M/5,
$$
for all $\tau>0$. On the other hand, we consider the set $A_i(r_0)=\{y\in Q:| y_i| \le r_0\}$. Since for small $r_0$ this set has a volume of the order of $r_0R^{N-1}$ and the function $G$ is bounded by a constant $C_1$ we have
$$
\int_{A_i(r_0)} v(y,\tau)\,dy\le
\int_{A_i(r_0)} G(y)\,dy\le C_1R^{N-1}r_0
$$
for all $\tau>0$. By choosing $r_0>0$ small we can get this quantity to be less than $M/4N$. This calculation is the same for all $i$. 

Now we look at the integral in the complement of the above sets, i.e., the remaining set $D=\{y: r_0<|y_i|<{R} \ \mbox{\rm for all }\ i\}$. Note that this set is composed of  $2^N$ symmetrical copies (see Figure \ref{fig:boat1} for the two dimensional case). Using the mass conservation we get
$$
\int_{D} v(y,\tau)\,dy\ge M-{ M/5}  -M/4 >M/2.
$$
Since $v$ is an SSNI function, we get in each of the $2^N$ copies the same result, so if
$D^+=\{y\in D: y_i>0 \ \mbox{\rm for all } \ i\} $ {(see Figure \eqref{fig:boat1})}  we get
$$
\int_{{D^+}} v(y,\tau)\,dy >M/2^{N+1}.
$$

\begin{figure}[t!]
 \centering
 \vspace{-3.0cm}
 \includegraphics[scale=0.1]{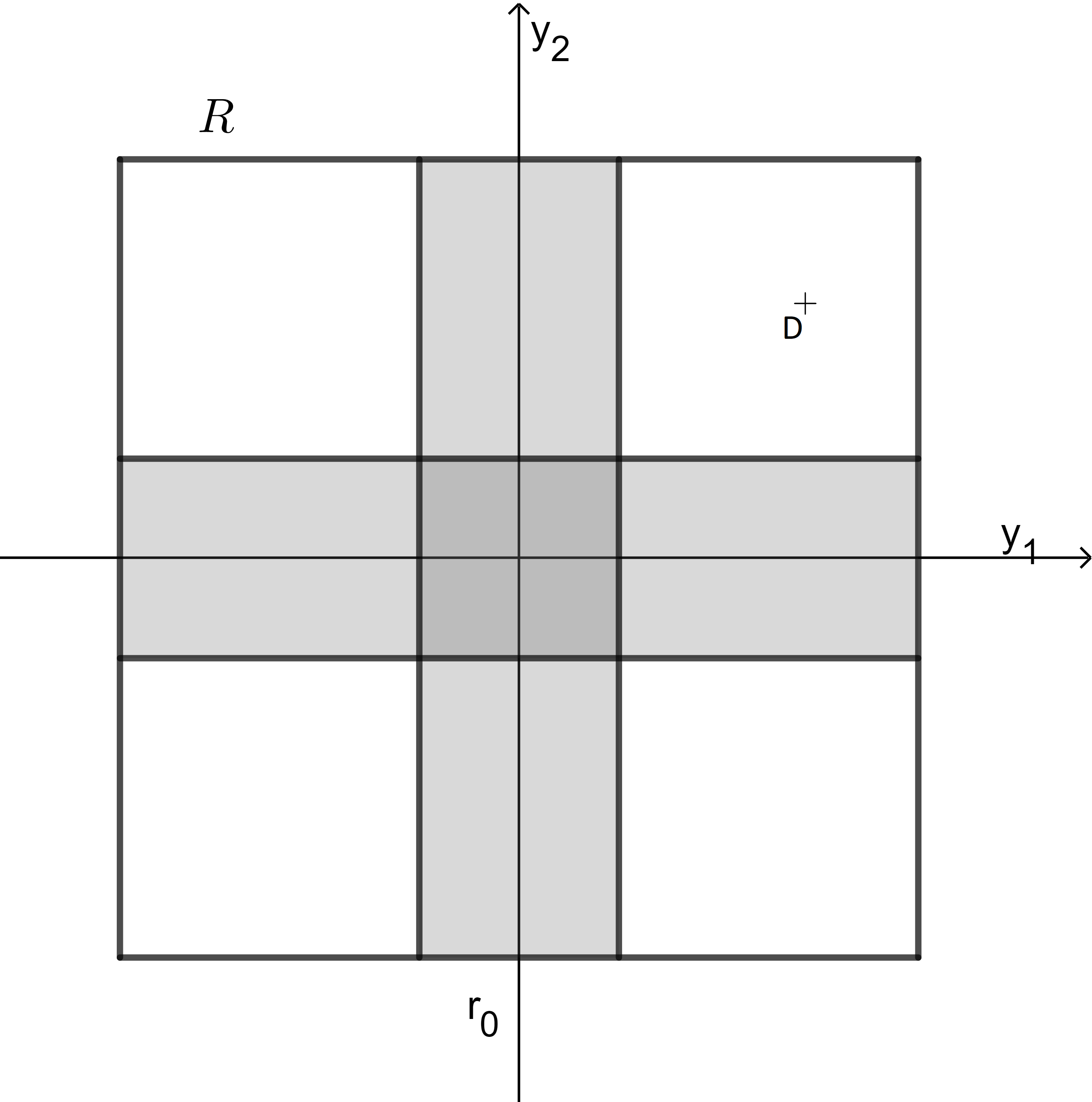}
 \caption{the set $D$ made by the union of the four white cubes and its subset$D^+$}
  \label{fig:boat1}
\end{figure}

Now we use the monotonicity in all variables to show that at the bottom-left corner point of $D^+$ we obtain a maximum, hence
$$
v(r_0,r_0,...,r_0,\tau)( R  -r_0)^N\ge M/2^{N+1}.
$$
Using again the separate monotonicity and symmetry of $v(\cdot,\tau)$, we conclude that
 $$
 v(y,\tau)\ge c_1 \qquad \mbox{ for all} \ |{y_i}|\le r_0, \ \tau>0,
 $$
 with $c_1=M\,2^{-(N+1)}( R -r_0)^{-N}$. The function $\zeta$ can be constructed as cut-off function, whose value is $c_{1}$ in the ball $B_{r_0/2}(0)$ and vanishing outside $B_{r_0}(0)$.
 This concludes the proof.
\qed


\section{Self-similar fundamental solution. Existence, uniqueness and properties}\label{sec.ex.ssfs}

Here we  prove one of the main results of  the present paper, Theorem \ref{fundamental solution}, concerning the existence of a unique fundamental solution  $U_M(x,t)$ of  the self-similar type \eqref{sss} with mass $M$, $M>0$.
 We start by the uniqueness part, Subsection \ref{sec.uniq}. The existence part will be discussed in Subsection \ref{existself}.
Precise monotonicity,  positivity and regularity properties occupy the final subsections.


\subsection{Proof of uniqueness of the  self-similar fundamental solution}\label{sec.uniq}

We settle the issue regarding the uniqueness of the  self-similar fundamental  solution  stated in Theorem \ref{fundamental solution}. We recall that the  profile $F_{M}$ of a self-similar fundamental  solution $U_M$ is nonnegative, bounded and  goes to zero as $|y|\to\infty$ with a certain multi-power rate.

   The proof combines a number of different arguments.

$\bullet$  \textit{Step 1.a. Aleksandrov's principle.}   We first recall the following anisotropic version of the monotonicity result \cite[Proposition 14.27]{Vlibro}. It uses another version of Aleksandrov's principle.

\begin{lemma}\label{monotonicitProp}
Let $u\geq0$ be a solution of the Cauchy problem for equation \eqref{APM} with initial data supported in a box $Q_{R}=[-R,R]^{N}$, $R>0$. Then for every $x,\widetilde{x}\in \ren$,  $x,\widetilde{x}\not\in Q_{2R}$   such that for some fixed $i$, the $i$-th coordinates  satisfy \, $|\widetilde{x}_i|\ge |x_i|+2R$, while the other components, labeled $j$, are equal, \textit{i.e.} $\widetilde{x}_j=x_j$ for all $j\neq i$. Then we have
\begin{equation}\label{alek.ssni}
u(x,t)\ge u(\widetilde{x},t).
\end{equation}
\end{lemma}
\begin{proof}
We consider the hyperplane $H$ which is the mediatrix between the points $x$ and $\widetilde{x}$ and let  $H^+$ and $H^-$ be the two half-spaces such that $x\in H^+$ and $\widetilde{x}\in H^-$. We denote by $\pi_H$ the specular symmetry that maps a point $x\in H^+$ into a point $\pi_H(x)\in H^-$  as in Proposition \ref{Prop Al}.  Let $u_1(x,t)=u(x,t)$ in $H^+\times (0,+\infty)$  and $u_2(x,t)=u_1(\pi_H(x),t)$.  By our choice of $\widetilde{x}$ and $x$ the initial value of $u_2$  at $t=0$ is zero because the support of $u(\cdot,0)$ is inside $Q_{R}$ while the initial value of $u_1$ is nonnegative in $H^+$. Moreover the values on $H\times(0,+\infty)$ and on $\partial Q_{2R}\times(0,+\infty)$ are the same by construction.  Then by Lemma \ref{comparisboundedunbound} with $\Omega=H^+\setminus Q_{2R}$ we get \eqref{alek.ssni}.
\end{proof}

   $\bullet$ \textit{Step 1.b. SSNI property  of the self-similar solutions}. Now we can prove the following property of the self-similar solutions.

\begin{lemma}\label{prop.ssni} Any non-negative self-similar solution $U_M(x,t)$ with finite mass $M$ is SSNI.
\end{lemma}

\noindent {\sl Proof}. We use two general ideas:

i) SSNI  is an asymptotic property of many solutions,

ii) self-similar solutions necessarily
verify asymptotic properties for all times.

 Let  us consider a non-negative self-similar solution, $U_M(x,t)$, of the self-similar form $U_M(x,t)=t^{-\alpha}F_M(t^{-a_1}x_1,..,t^{-a_N}x_N)$, see \eqref{sss}. We must prove that it fulfills the SSNI property. This is done by approximation, rescaling, and passing to the limit. We start by approximating $U_M$ at time $t=1$ with a sequence of bounded, compactly supported functions $u_n(x,1)$ with increasing supports and converging to $U_M(x,1)$ in $L^1(\mathbb{R}^N)$. We consider the corresponding solutions $u_n(x,t)$ to \eqref{APM}, for times $t\ge 1$.

 (1) Because of their compact support at $t=1$, the Aleksandrov principle implies that these functions $u_n(\cdot,t)$ satisfy for all $t>1$ an approximate version of the SSNI properties as follows. If the initial support at $t=1$ is contained in the box $Q_{R_{n }}=[-R_{ n },R_{n }]^N$, $R_{ n }>0$, then by Lemma \ref{monotonicitProp}, we have that for all $t>1$ and for every $x,\widetilde{x}\in \ren$,  $x,\widetilde{x}\not\in Q_{2R_{n }}$, it holds
\begin{equation}\label{alek.ssni}
u_n(x,t)\ge u_n(\widetilde{x},t)
\end{equation}
on the condition that  for every fixed $i$  $|\widetilde{x}^i|\ge |x^i|+2R_n$ and $\widetilde{x}^j=x^j$ for all $j\neq i$. The length $2R_{ n }$ plays here the role of error in the SSNI property in what regards monotonicity in every coordinate direction.

(2) Since the self-similar solution has typical penetration length  $x_i\sim t^{a_i}$ in each direction and $a_i>0$,  the previously detected error length $2R_n$ becomes comparatively negligible.  It is now convenient to pass to  rescaled variables as in \eqref{NewVariables} (with $t_0=0$, so that $t=1$ means $\tau=0$, and  $y_i=x_i\,t^{-a_i}$). Then, the monotonicity properties, as just derived for $u_n$ by Aleksandrov, keep being valid in terms of $(y_1,\cdots, y_N)$ with the reformulation:
\begin{equation}\label{ssni}
v_{n}(y,\tau)\ge v_{n}(\widetilde{y},\tau)
\end{equation}
 for $y,\widetilde{y}\not\in \prod_{i=1}^N [-R_n t^{-a_i},R_nt^{-a_i}]$ on the condition that for every fixed $i$ \ $|\widetilde{y}_i|\ge |y_i|+2R_n t^{-a_i}$ and $\widetilde{y}_j=y_j$ for all $j\neq i$ ,  $\tau= \log(t)$. Similarly, symmetry comparisons are true up to a displacement $R_n\,t^{-a_i}$. We also note that by the contraction principle, for $\tau \geq 0$
and $n\ge n(\ve)$ we have after an easy computation
\[
\|v_{n}(\cdot, \tau)-F_M\|_{L^{1}(\R^{N})}\leq \|u_{n}(1)-U_M(1)\|_{L^{1}(\R^{N})}\le \ve.
\]

(3) Now we pass to the limit in $n,\tau$ and $\ve$ to translate the asymptotic approximate properties into
exact properties. We first let $\tau\to\infty$ with  $\ve$ and  $n\ge n(\ve)$ fixed.
We observe that $v_n(y,\tau)$  converges to some $V_n(y)$ along some subsequence $\tau_k\to\infty$,  using the smoothing effect \eqref{Linfty-L1}.  The necessary compactness of the orbit
$v_{n}(\cdot, \tau)$ follows from the energy estimates  \eqref{Energywholespace} for equation \eqref{APM} after translating them to the $v$-equation, an operation that keeps the estimates uniformly bounded. Compactness in time follows from the Aubin-Lions-Simon lemma, used as in Subsection 2.

 We stress that $R_nt_k^{-a_i}:=R_n e^{-\tau_k a_i}\rightarrow0$ as $\tau_k\to\infty$. From
\eqref{ssni} we get
\begin{equation}
V_{n}(y)\ge V_n(\widetilde{y})
\end{equation}
on the condition that for every fixed i $|\widetilde{y}_i|\ge |y_i|$ and $\widetilde{y}_j=y_j$ for all $j\neq i$.  We also have    by Fatou's lemma that   $\|V_{n}-F_M\|_{L^{1}(\R^{N})}\leq \ve$. This implies after letting $\ve\to 0$ (hence, $n(\ve)\to\infty$) that
\[
F_M(y)\geq F_M(\widetilde{y})
\]
for {almost every $y$, $\widetilde{y}$} such that for every fixed $i$,  $|\widetilde{y}_i|\ge |y_i|$ and $\widetilde{y}_j=y_j$ for all $j\neq i$. {Therefore, the continuity of $F_{M}$ implies that such monotonicity is preserved for \emph{all} the points $y$, $\widetilde{y}$ with the previous property}.

 By a similar argument, $F_M$ is symmetric with respect to each $ y_{i}$ and the full SSNI applies to $F$, hence to the original $U_M$. \qed

\medskip

 In the proof of the uniqueness, we need the following lemma on the set of positivity of profile $F_M$.

\begin{lemma}\label{star shap} The set $\Omega_0=\left\{y\in \mathbb{R}^N: F_M(y)>0\right\}$ is star-shaped from the origin,\textit{ i.e.,} for all $y^0\in\Omega_0$ the line segment from $0$ to $y^0$ lies in $\Omega_0$.
\end{lemma}
\noindent {\sl Proof.} We stress that $F_M(0)>0$, then $0\in \Omega_0$. Let us take $y^0\in\Omega_0$ and consider the segment $y_i=\frac{y_i^0}{s_0}s$ for $i=1,\cdots,N$ and $s\in[0,s_0]$. Recalling that $F_M$ is SSNI (see Lemma \ref{prop.ssni}), then $F_M(y)= F_M(\frac{|y_1^0|}{s_0}s, \cdots,\frac{|y_N^0|}{s_0}s)\geq F_M(\frac{|y_1^0|}{s_0}s_0, \cdots,\frac{|y_N^0|}{s_0}s_0) =F(y^0)>0$. \qed

\medskip

\noindent    $\bullet$ \textit{Step 2. Mass Difference Analysis. }  We enter the main steps in the proof of uniqueness. The first one is the mass difference analysis.

(i) {Let us suppose that $U_1$ and $U_2$ are two self-similar fundamental solutions with the same mass $M_1=M_2=M>0$ and profiles $F_{1},\,F_{2}$. We introduce the functional}
 \begin{equation}\label{J}
 J[U_1,U_2](t)=\int_{\mathbb{R}^N} (U_1(x,t)-U_2(x,t))_+\,dx.
 \end{equation}
 By the accretivity of the operator  this is a Lyapunov functional, i.e., it is nonnegative and nonincreasing in time. Observe that we have the formula,
  \begin{equation}
 J[U_1,U_2](t)=\int_{\mathbb{R}^N} (F_1(x)-F_2(x))_+\,dx,\label{JF}
 \end{equation}
 \emph{i.e.} $J[U_1,U_2]$ must be constant in time for self-similar solutions, that is
 \begin{equation}
 J[U_1,U_2](t)=c_0 \label{constantJ}
 \end{equation}
 for a suitable constant $c_0\geq0$.

(ii) The main point is that such different solutions with the same mass must intersect.
 We argue as follows: we define at the time $t=1$, the maximum of the two profiles $G^*=\max\{F_1,F_2\}$, and the minimum $G_*=\min\{F_1,F_2\}$. Let $U^*$ and $U_*$ the corresponding solutions of   \eqref{APM} for $t>1$. By Theorem \ref{contraction} we have for every such $t>1$
  \begin{equation}\label{eq.order}
U_*(x,t)\le U_1(x,t), U_2(x,t)  \le U^*(x,t).
\end{equation}

%
%

 We claim that $U^*(x,t)$, $t>1$, is a self-similar solution that equals the maximum of the two solutions $U_1$ and $U_2$, and similarly, $U_*(x,t_1)$, $t>1$,  is a self-similar solution that equals the minimum of the two solutions.
First, note that
\begin{equation}\label{57.2}
U^*\geq \max\{U_1,U_2\} \text{ for all }(x,t)\in \mathbb{R}^N\times[1,+\infty)
\end{equation}
by Theorem \ref{contraction}. Next, by the mass preservation, we get
\begin{equation}\label{57.3}
\int_{\mathbb{R}^N}U^*(x,t)\,dx=\int_{\mathbb{R}^N}U^*(x,1)\,dx=\int_{\mathbb{R}^N}G{*}(x)\,dx
\end{equation}
for all $t\geq1$. Since
$$\max\{U_1,U_2\}(x,t)=t^{-\alpha}\max\{F_1,F_2\}(t^{-\alpha \sigma_1}x_1,\cdots,t^{-\alpha \sigma_N}x_N)$$
we have
$$\int_{\mathbb{R}^N} \max\{U_1,U_2\}(x,t)\,dx=\int_{\mathbb{R}^N} \max\{F_1,F_2\}(x)\,dx=\int_{\mathbb{R}^N} G^{*}(x)\,dx$$
for all $t>0$. Consequently,
$$\int_{\mathbb{R}^N} U^*(x,t)\,dx=\int_{\mathbb{R}^N}\max\{U_1,U_2\}(x,t)\, dx$$
for all $t\geq1$. Combining last inequality and {\eqref{57.2}} we conclude that
$$
U^*(x,t)= \max\{U_1,U_2\}(x,t)\text{ for all }(x,t)\in \mathbb{R}^N\times[1,+\infty).
$$
Similarly,
$$
U_*(x,t)= \min\{U_1,U_2\}(x,t)\text{ for all }(x,t)\in \mathbb{R}^N\times[1,+\infty).
$$
 Note that both are self-similar solutions.

\noindent $\bullet$\textit{    Step 3.  Strong Maximum Principle. } Finally, we use the strong maximum principle (SMP for short) to show that the last conclusion is impossible in our setting.

(iii) Since $ U^*(x,1)=G^*(x)$, we have that
\begin{itemize}
\item [-] $U^*(x,1)\geq U_1(x,1)$ and $U^*(x,1)\geq U_2(x,1)$ for all $x\in\mathbb{R}^N$.
\item [-] $U^*(0,1)$ equals $U_1(0,1)$ or $U_2(0,1)$
\end{itemize}
Now we make the observation that for self-similar solutions touching at $x=0$ for $t=1$ implies touching at $x=0$ for every $t>1$.

We observe that for every $t\ge 1$ both  $U_1(x,t),U_2(x,t)>0$ are strictly positive at $x=0$ because they are SSNI (see Lemma \ref{prop.ssni}) and they have positive mass. By continuity $U_1(x,t),U_2(x,t)>c>0$ in a neighbourhood $I(0)$ of 0 for all $t\geq1$, $t$ close to 1.

We stress that in the open set $I(0)$ the solutions $U_1,U_2$ and $U^*$ are positive, so that we can prove locally smoothness for them since the equation is not degenerate (see  \cite[ Theorem 6.1, Chapter V]{LSU}). As a consequence, we can apply the evolution strong maximum principle   (see \cite{Ni, PS2007})   in $I(0)\times [1,t_1]$ for  $t_1>1$, $t_1-1$ small, applying it to the ordered solutions $U^*$ and $U_1$, or to $U^*$ and $U_2$.

Suppose that $U^*$ and $U_1$ touch for $t=1$ at $x=0$, i.e.,  $U^*(0,1)=U_1(0,1)$. The SMP implies that they cannot touch again for $t>1$ at $x=0$ unless they are locally the same. However, both are self-similar so that the touching point is preserved.  Indeed, since they are self-similar, if  $U^*(0,1)=U_1(0,1)$ then $U^*(0,t)=U_1(0,t)$ for all $t>1$. 
We conclude  that
$U^*=U_1$ in the whole open set $\Omega_1=\{x: U_1(x,t_1)>0\}$ and the SMP can be applied (and it holds also on its closure by continuity). By the definition of maximum of two solutions, it means that $U_1(x,t_1)\ge U_2(x,t_1)$ in $\Omega_1$.

If $\Omega_1$ is the whole of $\mathbb{R}^N$, we arrive at the conclusion that $U_1(x,t_1)\ge U_2(x,t_1)$
everywhere. This implies by equality of mass that $U_1=U_2$ at $t=t_1$. 
In that case we must have that $c_0=0$, where $c_0$ is the constant appearing in \eqref{constantJ}, therefore $U_{1}\leq U_{2}$  for all $x$  all times. By  mass conservation  we finally have $U_{1}=U_{2}$  (for all $x$ and all $t$).  The proof on uniqueness concludes in this case.

iv) We still have to consider the case where the positivity set of $U_1$, $\Omega_1$, is not known to be $\mathbb{R}^N$.
  Lemma \ref{star shap} guarantees that the set where $U_1$ is positive is   star-shaped  from the origin. If $U^*$ and $U_1$ touch for $t=1$ at $x=0$ and $\Omega_1$ is the not whole $\mathbb{R}^N$, then for every unit vector $\vec e\in\ren$ there is a point  $x=s_0\vec e$ with $s_0>0$ that belongs to the boundary of $\Omega_{1}$ and is such that $U_1(x,t_1)>0$ if $x=s\vec e$ with $s<s_0$ and $U_1(x,t_1)=0$ if $x=s\vec e$ with $s\ge s_0$.  We conclude as in the previous analysis that $U_1(x,t_1)\ge U_2(x,t_1)$ on $\partial \Omega_1$, which means by the property SSNI applied to $U_2$ that $U_2=0$ is zero outside of $\Omega_1$.  Hence, $U_1\ge U_2$ everywhere. By equality of mass $U_1=U_2$. 

 v ) A similar argument applies when $U^*{(0,1)}=U_2{(0,1)}$. At the end in every case $U_1=U_2$ and we conclude the proof of uniqueness. \qed

\subsection{Proof of existence of a self-similar solution}\label{existself}

\noindent    $\bullet $ \textit{A remark.}   We start the existence proof   with the following remark.
 Let $\phi\geq 0$ bounded, symmetrically decreasing with respect to $x_{i}$ with total mass $M$.
We consider the solution $u_1$ (uniqueness is given by Theorem \ref{contraction}) with such initial datum, i.e. $u_1(x, 0  )=\phi$, and denote
\begin{equation}\label{uk}
u_k(x,t)=\mathcal{R}_ku_1(x,t)=k^{\alpha}u_1(k^{\sigma_1\alpha}x_1,..., k^{\sigma_N\alpha}x_N,kt)
\end{equation}
for every $k>1$, which solves the main equation \eqref{APM}. We want to let $k\rightarrow\infty$. In terms of rescaled variables \eqref{NewVariables} (with $t_0= 1 $) we have
\begin{equation*}
\begin{split}
v_k(y,\tau)&=e^{\alpha\tau }u_k(y_1e^{\alpha \sigma_1 \tau},...,y_Ne^{\alpha \sigma_N \tau},e^\tau)
\\
&=e^{\alpha \tau }k^{\alpha}
u_1(k^{\sigma_1\alpha}y_1e^{\tau\sigma_1\alpha},..., k^{\sigma_n\alpha}y_Ne^{\tau\sigma_N\alpha},ke^\tau),
\end{split}
\end{equation*}
where $t=e^{\tau}$, $\tau>0$. Put $k=e^{h}$ so that $k^{\sigma_i\alpha}e^{\tau\sigma_i\alpha}=e^{(\tau+h)\sigma_i\alpha}$. Then
\begin{equation*}
v_k(y,\tau)
=e^{(\tau+h) \alpha }
u_1\left(y_1e^{(\tau+h)\sigma_1\alpha},..., y_Ne^{(\tau+h)\sigma_N\alpha},e^{(\tau+h)}\right).
\end{equation*}
Putting $v_1(y',\tau' )=t^{\alpha}u_1(x,t)$ with $y'_i=x_it^{-\alpha \sigma_i}$, $\tau'=\log t$, then
 \begin{equation*}
v_k(y,\tau)
=
e^{(\tau+h-\tau') \alpha }
v_1(y_1e^{(\tau+h-\tau')\sigma_1\alpha},..., y_Ne^{(\tau+h-\tau')\sigma_N\alpha},\tau+h).
\end{equation*}
Setting $\tau'=\tau+h$, we get
\begin{equation}\label{vk}
v_k(y,\tau)=v_1(y,\tau+h).
\end{equation}
This means that the transformation $\mathcal{R}_k$ becomes a forward time shift in the rescaled variables that we call $\mathcal{S}_h$, with $h=\log k$.

\medskip

\noindent    $\bullet$\textit{Step 1. Fixed point argument.}   The next steps prepare the setting to obtain a fixed point of $\mathcal{S}_h$.
\medskip

(i) We let $X = L^1(\mathbb{R}^N)$. We consider an important subset of $ X$ defined as follows.

\noindent  Let us fix a large constant $L_1>0$ and consider the barrier  $G_{k(L_1)}$ as in Theorem \ref{thm.barr} (with $k$ related to $L_1$ and $M=1$). We define $\mathcal K= \mathcal K(L_1)$ as the set of all $\phi\in L_{+}^{1}(\R^{N})\cap L^{\infty}(\R^{N})$  such that: \

(a) $\int \phi(y)\,dy = 1$, \

(b) $\,\phi\,$ is SSNI (separately symmetric and nonincreasing w.r.\,to all coordinates),

(c1) $\phi$  is uniformly bounded above by $L_1$.

(c2) $\phi$  is bounded above by the fixed barrier function $G_{k(L_1)}$,  \textit{i.e.} $\phi(x)\leq G_{k(L_1)}(x)$ for all $x\in\R^{N}$.  

\medskip

\noindent We stress that we can reduce ourselves to the case of unit mass, because we can pass from any mass $M>0$ to mass $M=1$ (see Subsection \ref{sec scaling} and \eqref{Change mass}).

\noindent It is easy to see that  $\mathcal K(L_1)$ is a non-empty, convex, closed and bounded subset  of   the Banach space $X$.

(ii) Next, we prove the existence of periodic orbits. For all $\phi\in \mathcal K(L_1)$ we consider the solution $v(y,h)$ to equation \eqref{APMs} starting at $\tau = 0$ with data $v(y, 0) =\phi(y)$.
We now consider for $h>0$ the semigroup map $S_h : X \rightarrow X$ defined by ${S} _h(\phi) =v(y, h)$.
The following lemma collects the facts we need.

\begin{lemma}\label{lemma.below}
Given $h>0$ there exists a value of $L_1(h)=L_1$ such that $S_h(\mathcal K(L_1))\subset K(L_1)$.  Under such situation, for every $\phi\in \mathcal K(L_1)$
\begin{equation}\label{bdd.below}
S_{h}\phi(y)\ge \zeta_{{h}}(y)   \quad \mbox{ for } \ y\in \ren, \ {h}>0.
\end{equation}
where $\zeta_{{h}}$ is a fixed function as in Lemma \ref{lem.pos}, which depends only on $h$.
\end{lemma}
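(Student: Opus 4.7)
The plan is to verify the three assertions of Lemma \ref{lemma.below} separately, using the barrier comparison of Theorem \ref{thm.barr}, the Aleksandrov symmetrization of Proposition \ref{Prop 3}, the smoothing effect of Theorem \ref{L1LI}, the approximation scheme of Subsection \ref{ssec.approx}, and the quantitative positivity statement in Lemma \ref{lem.pos}. The only delicate bookkeeping is to coordinate the constants $M$ and $L_1$ so that the four defining conditions of $K$ are preserved and the positivity and compactness ingredients apply uniformly to all $\phi\in K$.

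\emph{Invariance $S_h(K)\subset K$.} Given $h>0$, set $\tau_0=h$ in Subsection \ref{ssec.uc}. Pick $M$ small enough that \eqref{barr.cond1} holds, and then pick $L_1$ in the interval
\[
C_1 M^{2\alpha/N}(1-e^{-h})^{-\alpha}\le L_1\le F_*\,e^{-\alpha h},
\]
which is non-empty once $M$ is small enough, so that \eqref{barr.cond2} and \eqref{barr.cond3} both hold. With this choice, property (a) (mass $M$) is preserved because the self-similar change of variables preserves the $L^1$-norm and \eqref{APM} preserves total mass; property (b) (SSNI) is preserved by Proposition \ref{Prop 3} translated to rescaled variables, whose drift term $\alpha\sigma_i(y_iv)_{y_i}$ is coordinate-wise symmetric and inward-pointing; property (c), namely $S_h\phi\le G$, is exactly the conclusion of Theorem \ref{thm.barr}; and property (d), $\|S_h\phi\|_\infty\le L_1$, follows from the rescaled form of the smoothing effect combined with the refined bound \eqref{barr.cond2}, as noted in the remark after Theorem \ref{thm.barr}.

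\emph{Relative compactness of $S_h(K)$ in $L^1(\mathbb{R}^N)$.} I apply the Kolmogorov--Riesz criterion to the family $\{S_h\phi:\phi\in K\}$. Tightness at infinity is immediate from the uniform domination $S_h\phi\le G$, with $G\in L^1(\mathbb{R}^N)$, and equi-integrability on bounded sets follows from the uniform $L^\infty$ bound by $L_1$. For the required $L^1$-equicontinuity under translations, I use that by the construction of Subsection \ref{ssec.approx} each $v(\cdot,h)=S_h\phi$ is the monotone limit of smooth positive solutions satisfying the anisotropic energy estimate \eqref{E2} on compact sets; combined with the uniform $L^\infty$ bound, this gives a uniform local bound on $\partial_{y_i}v$ in $L^2_{\mathrm{loc}}$ (and hence on its modulus of continuity in $L^1_{\mathrm{loc}}$), which transfers into the desired equicontinuity of translations in the mean.

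\emph{Lower bound.} Every $\phi\in K(M,L_1)$ is bounded, nonnegative, SSNI, has mass $M$ and satisfies $\phi\le G\le\overline{F}$; inspection of the proof of Lemma \ref{lem.pos} shows that the compact-support assumption there is used only for convenience and that precisely these four properties enter the argument. Applying Lemma \ref{lem.pos} directly to $v(y,\tau)=S_\tau\phi(y)$ yields \eqref{bdd.below} with $\zeta\ge c_1>0$ on $B_{r_0}(0)$, where $c_1$ and $r_0$ depend only on $M$, $L_1$ and $G$, hence are uniform over $\phi\in K$. The main obstacle I anticipate is the local $L^1$-equicontinuity in the compactness step, which requires carefully passing the anisotropic energy inequality through the approximation limit and converting gradient bounds on $v^{m_i}$ into bounds on $v$ via the uniform $L^\infty$ control; the remaining parts reduce to a consistent bookkeeping of the estimates already established in Sections \ref{sec.basic}--\ref{sec.qual}.
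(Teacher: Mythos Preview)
Your proof is correct and follows essentially the same approach as the paper: the invariance step uses the barrier comparison (Theorem \ref{thm.barr}) together with the SSNI preservation (Proposition \ref{Prop 3}) and the same bookkeeping of conditions \eqref{barr.cond1}--\eqref{barr.cond3}, and the lower bound comes directly from Lemma \ref{lem.pos}. The only notable difference is in the compactness step: the paper simply invokes ``known regularity theory'' (i.e., local H\"older estimates for degenerate parabolic equations, which give local equicontinuity and hence compactness via Ascoli--Arzel\`a combined with the tail control from $G$), whereas you spell out a Kolmogorov--Riesz argument based on the anisotropic energy estimates \eqref{E2} and the uniform $L^\infty$ bound; both routes are valid, and yours is more self-contained at the cost of a little extra work in converting bounds on $\partial_{y_i}v^{m_i}$ into bounds on $\partial_{y_i}v$.
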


\noindent {\sl Proof.}
Fix a small $h>0$, and let $L_1=L_{1}(h)$ such that
\begin{equation}\label{L1 h}
L_{1}\geq  C   (1-e^{-h})^{-\alpha},
\end{equation}
where $ C   $ is the constant in the smoothing effect \eqref{Linfty-L1}. We take $\tau_1=h$ in the proof of Theorem \ref{thm.barr} and choose the rescaled $\mathcal{T}_k\overline{F}$ such that $F_{\ast}=F_{\ast}(h)$, the maximum of $\mathcal{T}_k\overline{F}$ on  the complement of ${\mathcal{T}_k\widetilde{B}_r(0)}$, fulfills
\[
L_1 e^{\alpha h}\le F_*.
\]
Then  using \eqref{L1 h}   we have
\[
 C    (1-e^{-h})^{-\alpha}\le F_*,
\]
whence \eqref{scelta k} is satisfied.
This ensures the existence of a barrier  $G_{k(h)}(y)$   (defined in \eqref{G_k}), such that for $\phi\in \mathcal{K}(L_1(h))$ and any $\tau>0$ we have $S_{ \tau}(\phi)\leq G_{k(h)}(y)$
a.e.. Then $S_{h}(\phi)$ obviously verifies (c2), while (a) is a consequence of mass conservation and (b) follows by Proposition \ref{Prop
3}. Moreover, \eqref{L1 h} ensures that from \eqref{77} we immediately find $S_{h}(\phi)\le L_1 $ a.e., that is property (c1).
The last estimate \eqref{bdd.below} comes from
Lemma \ref{lem.pos}, which holds once a fixed barrier is determined.  \qed

\begin{lemma}\label{lemma.precomp}
The image set \ $Y=S_h(\mathcal K(L_1))$  is relatively compact in $X$.
\end{lemma}

\begin{proof} The  image set $S_h(\mathcal K)$ is bounded in $L^1(\ren)$ and $L^\infty(\ren)$ by already established estimates using the definition of $v$ in terms of $u$. The Fr\'echet-Kolmogorov theorem says that a subset $Y$ is relatively compact in $L^1(\ren)$ if and only if the following two conditions hold

(A) (Equi-continuity in $L^1$ norm)
$$
\displaystyle \lim_{| z | \to 0} \int_{\ren} |f(y)-f (y+z)|\,dy = 0
 $$
 and the limit is uniform on $f\in Y$.

(B) (Equi-tightness)
$$
\displaystyle \lim _{R\to \infty }\int _{|x|>R}\left|f(y)\right|\,dy=0
$$
and the limit is uniform  on $f\in Y$.

In our case the second property comes from the uniform upper bound by a common function $G_{k(h)}$. So for every $\ve>0$ we can find $R(\ve)>0$ such that
$\int _{|x|>R(\ve)}\left|f\right|\,dy<\ve$ for all $f=S_t\phi$ and all $t>0$.

For the proof of (A) we proceed as follows. Let $v(\cdot,t)= S_t\phi$. As a consequence of the energy estimates {{\eqref{Energywholespace}}}, all the derivatives $\partial_i v^{m_i}$ are bounded in $L^2(0,T:L^2(\ren))$. Since $\partial_i v= (1/m_i) v ^{1-m_i}\partial_i v^{m_i}$ and $v$ is bounded in this time interval we conclude that $|\nabla v|$ is bounded in $L^2(0,h:L^2(\ren))$. This means that for some $\tau_0 \in (h/2,h)$ the integral
$$
\int_{\ren} |\nabla v(y,\tau_0)|^2\,dy \le \frac2{h}\int_{h/2}^h\int_{\ren} |\nabla v(y,\tau)|^2\,dyd\tau \le C_2/h,
$$
where $C_2$ depends only on $L_1$. By an easy functional immersion this implies that  for every small displacement  $z$ with $|z|\le \delta$ we have and for every $R>0$
$$
\int_{B_R(0)} |v(y,\tau_0)-v(y+s,\tau_0)|\,dy \le \delta C_3
$$
and $C_3$ is a constant that depends only $R$, $h$ and on $C_2$. This equi-continuity bound in the interior is independent of the particular initial data in $\phi  \in \mathcal K$. Putting $R=R(\ve)$ and using the uniform bound $S_t \phi\le G_{k(h)}$ we get full equi-continuity at $\tau=\tau_0$:
$$
\int_{\ren} |v(y,\tau_0)-v(y+z,\tau_0)|\,dy \le 2\ve
$$
uniformly on $\phi\in \mathcal K$ if $\delta$ is small enough. Since both $v(y,\tau)$ and $v(y+z,\tau)$ are solutions of the renormalized equation, we conclude from the $L^1$ contraction property \eqref{contraction} that
$$
\int_{\ren} |v(y,\tau_0)-v(y+z,\tau_0)|\,dy \le 2\ve
$$
uniformly on $\phi\in \mathcal K$ for all $\tau\ge \tau_0$, in particular for $\tau=h$.
 This makes the set $S_h(\mathcal K)$ precompact in $L^1(\ren)$.
\end{proof}

We resume the proof of existence. It now follows from the Schauder Fixed Point Theorem,  see \cite{Evansbk}, Section 9,
that there exists at least  a   fixed point $\phi_h \in \mathcal{K}$, \textit{i.\,e.,} $ S_h(\phi_h) = \phi_h$. The fixed point is in $\mathcal{K}$, so it is not trivial because  its mass is $1$.
Iterating the equality we get periodicity for the orbit $V _h(y, \tau)$  starting at $\tau = 0$
from $V_h(y,0)=\phi_{h}(y)$:
$$
V_h(y,\tau+ kh) =  V_h(y,\tau )\quad  \forall \tau > 0,
$$
This is valid for all integers $k\geq1$.   It is not a trivial orbit, $ V _h\not\equiv 0$.

\medskip

   $\bullet$\textit{Step 2. Periodicity of solutions.}    The next result examines the role of periodic solutions.

\begin{lemma}\label{lemma.periodicmeans}
Any periodic solution  of our renormalized problem, like $ V_h$, must be stationary in time. We will write $V_h(y,\tau )=F(y)$.
\end{lemma}

\begin{proof} The proof follows the lines of the uniqueness proof of previous subsection. Thus, if $ V _1$ is periodic solution that is not stationary, then $ V _2(y,\tau)= V _1(y,\tau+ c)$ must be different from $ V _1$ for some $c>0$, and both have the same mass. With notations as above we consider the functional { defined in \eqref{J} evaluated at $V_1,V_2$:}
 \begin{equation*}
 J[ V_1, V_2](\tau)=\int_{\mathbb{R}^N} ( V_1(x,\tau)- V_2(x,\tau))_+\,dx.
 \end{equation*}
 {As recalled before, b}y the accretivity of the operator  this is a Lyapunov functional, i.e., it is nonnegative and nonincreasing in time.
 By the periodicity of $ V _1$ and $ V _2$, this functional must be periodic in time. Combining those properties we conclude that it is constant, say $C\ge 0$, and we have to decide whether $C$ is a positive constant or zero.  In case $C=0$ we conclude that $V_1=V_2$ and we are done.

To eliminate the other option, $C>0$, we may argue as in the uniqueness result. We will prove that for two different solutions with the same mass this functional must be strictly decreasing in time. The main point is that such different solutions with the same mass must intersect. We argue as in Subsection \ref{sec.uniq}, where the difference is due to the fact that our solutions are \emph{not} in the self-similar form : we define at a certain time, say $\tau=0$, the maximum of the two profiles $ V ^*(0)=\max\{ V _1(0), V _2(0)\}$, and the minimum $ V _*(0)=\min\{ V _1(0), V _2(0)\}$. Let $ V ^*$ and $ V _*$ the corresponding solutions for $\tau>0$. We have for every such $\tau>0$
  \begin{equation}\label{eq.order2}
 V _*(y,\tau)\le  V _1(y,\tau),  V _2(y,\tau)  \le  V ^*(y,\tau).
\end{equation}
On the other hand, it easy to see by the definitions of $ V ^*(0)$, $ V _*(0)$ that
$$
\int_{\mathbb{R}^N}  V ^*(y,0)\,dy=M+ J[ V _1, V _2](0), \quad  \int_{{\mathbb{R}^N}}  V _*(y,0)\,dy=M- J[ V _1, V _2](0).
$$
Since $V ^*(y,0)$ and $ V _*(y,0)$ are ordered, this difference of mass is conserved in time: for $\tau>0$
\begin{equation}
\int_{{\mathbb{R}^N}} ( V ^*(y,\tau)- V _*(y,\tau))\,dy= 2J[ V _1, V _2](0).\label{eqJ2}
\end{equation}
Now, since $ V _{1},\, V _{2}$ have the same mass, \eqref{eq.order2} and \eqref{eqJ2} imply that for $\tau>0$
$$
\int_{\mathbb{R}^N} ( V _2(y,\tau)- V _1(y,\tau))_+\,dy=\int_{{\mathbb{R}^N}} ( V _1(y,\tau)- V _2(y,\tau))_+\,dy\le J[ V _1, V _2](0),
$$
but the constancy of  $J[ V _1, V _2]$ forces to have an equality, occurring only if the solution $ V ^*(y,\tau)$ equals the maximum of the two solutions $ V _1$ and $ V _2$, and the solution $ V _*(x,\tau)$  equals the minimum of the two solutions.

We argue then as in Subsection 6.1, in order to show that the constant defining $J$ is actually zero, therefore $V_{1}=V_{2}$. We need only to recall that $ V _1$ and $ V _2$ are SSNI. \end{proof}

\medskip

  $\bullet$\textit{Step 3. Conclusion.}   Thus, we take as profile the just defined $F(y)=V_h(y,0))$, where $V_h$ is the fixed point found above. Going back to the original variables, it means that the
corresponding function
\begin{equation}\label{u cap}
U(x,t)= t^{-\alpha} F(x_1 t  ^{-\sigma_1\alpha},...,x_N  t  ^{-\sigma_N\alpha})
\end{equation}
is a self-similar solution of equation \eqref{APM}  by construction. Indeed, it is defined as a self-similar function in \eqref{u cap} and  the profile $F$ verifies the stationary equation \eqref{StatEq}, see Lemma \ref{selfsimarefundsolut}. Actually $U$ has mass 1, but we can get a self-similar solution of any fixed mass by using the scaling \eqref{scal.tr}.


From this moment on we will denote the fundamental solution with mass $M$ by the label $U_M(x,t)$ and its profile, given by \eqref{sss}, by $F_M(y)$. The subscript $M$ will be omitted at times when explicit mention is not needed.

\subsection{Property of monotonicity with respect to the mass}\label{ssec.monotM}

 We conclude this section with some properties of the self-similar fundamental solution $U_M$ of mass $M$ and on its profile $F_M$, both built in the previous subsection. First we prove the monotonicity property with respect to the total mass. This property will be needed below.

\begin{proposition}\label{prop.mon.M} The profile $F_M$ depends monotonically on the mass $M$.
\end{proposition}

\noindent {\sl Proof.} Let us suppose $M_2>M_1>0$. We will prove that  $F_{M_2}(y)\ge F_{M_1}(y)$ for all $y$.

i) By uniqueness of the profile of every mass (see Section \ref{sec.uniq}) and \eqref{Tk}, we have
$$
F_{M_2}(y)=k F_{M_1}(k^{(1-m_1)/2}y_1,\cdots,k^{(1-m_N)/2}y_N)
$$
where $k>1$ is such that $M_2=M_1k^{\frac{N}{2}(\bar m-m_c)}$. Then $F_{M_2}(0)>F_{M_1}(0)$. Moreover $F_{M_2}(0),F_{M_1}(0)>0$ and by continuity $F_{M_1},F_{M_2}$ are positive in a  neighbourhood of zero $I(0)$.

ii) We are as in the conclusion of the proof of uniqueness (Subsection \ref{sec.uniq}). Let us consider $F^*(y)=\max\{F_{M_1},F_{M_2}\}$. It is a solution of the equation and $F^*(y)=F_{M_2}(y)$ in $y=0$. We stress that in the open set $I(0)$ the solutions $F^*,F_{M_2}$ are positive, so that we can prove locally smoothness for them since the equation is not degenerate (see \cite[ Theorem 6.1, Chapter V]{LSU}). As a consequence, we can apply the strong maximum principle in the whole open set $\Omega_2$, where $F_{M_2}>0$. We conclude that $F^*(y)=F_{M_2}(y)$ in $\Omega_2$. If $\Omega_2$ is the whole of $\mathbb{R}^N$, we have arrived at the conclusion that $F_{M_2}(y)\ge F_{M_1}(y)$ for all $y$.

iii) We still have to consider the case where $\Omega_2$ is a proper subset of $\mathbb{R}^N$.
 We observe that by Lemma \ref{star shap}, the set $\Omega_2$ is  star-shaped sets from the origin. Then for every unit direction $\vec e$ there is a point $x=s_0\vec e$, $s_0>0$, that belongs to the boundary of $\Omega_2$ and is such that $F_{M_2}(s\vec e)>0$ if $s<s_0$ and $F_{M_2}(s\vec e)=0$ if $s\ge s_0$. We conclude from the previous analysis that $F_{M_1}(y)\le F_{M_2}(y)=0$ on $\partial \Omega_2$, which means by the property SSNI applied to $F_{M_1}$ that $F_{M_1}=0$ is zero outside of $\Omega_2$. The conclusion is that $F_{M_2}(y)\ge F_{M_1}(y)$
 everywhere. \qed

\subsection{Positivity of the self-similar fundamental solutions}\label{ssec.posit}

Now we prove the strict positivity of the self-similar fundamental solution. 

\begin{theorem}\label{TH2} The  self-similar fundamental solution $U_M(x,t)$ is strictly positive for every $x\in\ren$, $t>0$. Its  profile function $F_M(y)$ is a $C^\infty$  and positive function everywhere in $\ren$. Moreover, there are sharp lower estimates of the asymptotic behaviour when $|y|\to\infty$ in the form
\begin{equation}\label{lowerbh}
F_1(y)\ge  c\,\min_i\{|y_i|^{-2/(1-m_i)}\}.
\end{equation}
\end{theorem}

\noindent {\sl Proof.}
 (i)  We first recall that the mass changing transformation  \eqref{Tk} with $\nu_i=(1-m_i)/2$ maps solutions of the stationary equation $\eqref{StatEq}$ of mass $M$ into solutions of the same equation of mass $k^\beta M$ with $\beta=\frac{N}{2}(\bar m-m_c)$ for every $k>0$. In particular, if $F_1(y)\ge 0 $ is the profile of the self-similar solution with unit mass, then $F_M(y)= (\mathcal T_k F_1)(y)$ is the profile of the self-similar solution with mass $M=k^{\beta}$.

(ii) Now, by Proposition \ref{prop.mon.M} we know that the family $F_M(y)$ is monotone nondecreasing in $M$, hence with respect to $k$.  It follows that for every choice of initial point $y_0=(y^0_1,\dots,y^0_N)$ the function
$$
f(k)= k\,F_1 (y^0_1 k^{\nu_1},\cdots, y^0_N k^{\nu_N})
$$
is increasing as $k$ increases. By the quantitative positivity lemma, Lemma \ref{lem.pos},  we also know that the function $F_1(y)$ is positive in a small box $Q_{r_0}=[-r_0,r_0]^N$:
\begin{equation*}
F_1(y)\ge  \ve>0 \quad \mbox{ for all } \ y\in \ren, \quad |y_i|\le r_0\, \quad \forall i.
\end{equation*}
(iii) Pick now any point $Y=(Y_1,\cdots,Y_N)$ outside of $Q_{r_0}$ and find the parameter $k$ such that
$|k^{-\nu_i}Y_i|\le r_0,$ for all $i$, i.e.,
$$
k=\max_i \left(\{|Y_i|/r_0\}^{2/(1-m_i)}\right)\,.
$$
We stress that $k>1$, because $Y\not \in Q_{r_0}$.  Let us take $(\widetilde{Y}_1,\cdots, \widetilde{Y}_N)=(k^{-\nu_1}Y_1,\cdots, k^{-\nu_N}Y_N)$. Since we have  $\widetilde{Y}\in Q_{r_0}$,  we get
$$
k\,F_1(Y)=k\,F_1(k^{\nu_1}\widetilde{Y}_1,\cdots,k^{\nu_N}\widetilde{Y}_N)\ge \,F_1(\widetilde{Y})\ge \ve.
$$
Therefore, $F_1(Y)$ is also positive for every $Y\not\in Q_{r_0}$. Moreover, the quantitative estimate $F_1(Y)\ge \ve/k$ can be written in the form:
\begin{equation}\label{lowerb}
F_1(Y)\ge  c\,\min_i\{|Y_i|^{-2/(1-m_i)}\},
\end{equation}
with $c=\ve  \min_i  r_0^{2/(1-m_i)}= c(N,m_i)$.

    \medskip

(iii) Using transformation \eqref{Tk} with $\nu_i=(1-m_i)/2$ we generalize  this lower estimate  to $F_M(y)$ for all $M>0$. Note that the lower bound \eqref{lowerb} is not affected by the change of mass, a curious propagation property that was already known in isotropic fast diffusion (where the self-similar solutions are explicit). What we find here is the correct form that is compatible with anisotropy.

\medskip

(iv) The global positivity of $U_M(x,t)$ is immediate. We have
$$
U_M(x,t)\ge c\, t^{-\alpha}\,\min_i\{|x_i|^{-2/(1-m_i)}t^{2a_i/(1-m_i)}\},
$$
with $\alpha$ and $a_i$ as in \eqref{alfa} and \eqref{ai}.
 \qed

\subsection{Regularity of the self-similar solution}\label{ssec.reg}

 The profile $F_{{M}}$ solves the quasilinear elliptic equation of the form \eqref{StatEq} which is singular in principle due to the nonlinearities $F_{M}^{m_i}$ with $0<m_i<1$. However, the regularity theory developed in great detail for nonsingular and non-degenerate elliptic equations, \cite{DiBenbk, LU}, applies to this case since it has a local form and we known that $F_{M}$ is  positive with quantitative positive upper and lower bounds in any neighbourhood of any point $y\in\ren$. Using well-known bootstrap arguments
 we may conclude

 \begin{theorem}\label{TReg} The  self-similar profile $F_M(y)$ is a $C^\infty$ function of $y$ in the whole space.
\end{theorem}

\section{Positivity for general nonnegative solutions}\label{pos.gen}

 We have just proved the strict positivity of self-similar solutions and given a positive lower bound for the rescaled function $v(y,\tau)$ introduced in \eqref{NewVariables} taking $t_0=1$ and then $\tau_0=\log t_0=0$.

\begin{theorem}[Infinite propagation of positivity]\label{thm.genpos}
Any  solution with nonnegative data and positive mass is continuous and positive everywhere in $\ren\times (0,\infty)$. More precisely, in terms of the $v$ variable, for every $R>0$  and $ \tau_0>0$ there exists a constant $C_2=C_2(v_0, R,  \tau_0 )>0$ such that $v(y,\tau)\geq C_2$ for $y\in B_R(0)$ and any $\tau>\tau_0$.
\end{theorem}

\noindent {\sl Proof.}  By solution we understand the solutions constructed in Theorem \ref{EUWES}.  We split it the proof into two cases.

\medskip

\indent {\bf I. Special data.}  {\sl The positivity result is true if the initial function $u_0$ is continuous, SSNI and compactly supported in a neighbourhood of the origin}.

 (1) We take those special data and get a lower estimate in small balls. For  SSNI see definition in Section \ref{ALEKSAND}. Arguing in terms of the rescaled variables, the assumptions  guarantee that the rescaled solution $v(y,\tau)=(t+1)^{\alpha}u(x,t)$ has initial data $u_{0}=v_{0}\leq \mathcal{T}_k\overline{F}$, where $\mathcal{T}_k\overline{F}$ is a suitable barrier (supersolution) to \eqref{StatEq} in a certain outer domain  ${{\mathbb{R}^N\setminus\mathcal{T}_k\widetilde{B}_r(0)}}$ obtained by {{cutting and }}rescaling $\overline{F}$ {{defined by \eqref{outer.barr bis}}} (see Section \ref{sec.upp} and formula \eqref{Tk}). Then, we can apply the qualitative lower estimate, Lemma \ref{lem.pos}, and conclude that $v(y,\tau)\ge \zeta(y)\ge c_1>0$ in a neighbourhood of $y=0$ for any time. We stress that $c_1$ depends on the $L^1$  norm of $u_0$ and on the radii $R$   and $r_0$ that are defined and used in Lemma \ref{lem.pos}.

(2)  In the sequel we must also work with $u$, since it satisfies a translation invariant equation, and this property is useful. From the lower bound for $v$ a corresponding lower bound formula holds for $u(x,t)$ in any time interval $0<t<t_1$, but this bound cannot be uniform in time.
Indeed, the lower bound for $u$, let us call it $c_1(u)$, depends on the final time $t_1$. We stress that we can make $t_1$ as large as we want  by taking $c_1(u)$ small enough. As a compensation, the decaying lower estimate applies to $u(x,t)$ in $x$-balls  that expand coordinate-wise like powers of time. This is a consequence of the rescaling in space.

Moreover, note that the argument works if we displace the origin and assume that $u_0$ is SSNI around some $x_0\ne 0$. In order to get a convenient definition of the rescaled variables $v(y,\tau)$ we must use the shifted space transformation $y_i=(x_i-x_{0i})t^{-\alpha \sigma_i}$. The previous argument shows that this $v$ will be uniformly positive in a given small neighbourhood of $0$ for all times. We conclude from this step that under the present assumptions $u(x,t)$ will be positive  forever in time in a suitable $x$-ball centered at $x_0$ that expands power-like with time, though the upper bound for $u$ decays like a power of time.

(3)  We now get an outer estimate under the previous assumptions ($u_0$ is continuous, compactly supported and SSNI).  By using the positivity of the  self-similar fundamental solution (see Theorem \ref{TH2}  ) we will prove that $u(x,t)$ is also positive in an outer cylinder $Q_o =D\times (0,t_2)$, where $D$ is the complement of  the ball of small radius $R_1$.  The idea is to find a small self-similar solution $U_{A}(x,t)$ and prove that
\begin{equation}
u(x,t)\geq U_{A}(x,t)>0
\end{equation}
for $(x,t)\in Q_o$. Since both are solutions of the equation in $Q_o$ we will check the comparison at the initial time and at the lateral boundary , and then we may apply the comparison principle Lemma \ref{comparisboundedunbound}  to conclude  that $u(x,t)\geq U_{A}(x,t)>0$ in the whole outer cylinder $Q_o$.

 The initial comparison is trivial since the fundamental solution vanishes for $t=0$ if $x\ne 0$ and $u_0\ge 0$ in $D$.
Comparison at the lateral boundary  is more delicate.  We use the fact that
 $$v(y,\tau)\geq c_{1}>0
 $$
  in the ball  $y\in B_{2R_1}(0)$. Therefore,
$$
u(x,t)\geq {c_1}\,{(t+1)^{-\alpha}}
$$
 for $x\in B_{2R_1}(0),$ $t>0$.

 We now use the scaling transformation \eqref{Tk} of the profile $F_1$ of the self-similar fundamental solution $U$ of mass $M=1$ and write, for every parameter $A>0$,
\[
F_{{A}}(y)=A \,{F_1}\left(A^{\nu_i}y_i\right)
\]
and let us consider the corresponding self-similar fundamental solution $U_{A}$. Since
\[
\lim_{t\rightarrow 0}U_{A}(x,t)=0 \quad \text{for }x\neq0 ,
\]
 we can choose $t<\bar{t}$, where $\bar{t}<t_{2}$, such that for any $x$ with $|x|=R_1$ one has
\[
U_{A}(x,t)<\frac{c_{1}}{(t_{2}+1)^{\alpha}}, \qquad t<\bar{t}.
\]
On the other hand, if $t\in [\bar{t},t_{2}]$ we have
\[
U_{A}(x,t)\leq \frac{{{1}}}{t^{\alpha}}{{F_A}}(0)\leq\frac{A}{\bar{t}^{\alpha}}{{F_1}}\left(0\right).
\]
At this point we can take $A$ so small (depending on $t_2$) such that
\[
U_{A}(x,t)\leq \frac{c_{1}}{(t_{2}+1)^{\alpha}}.
\]
We  have chosen so far a positive self-similar solution $U_{A}$ such that for all $t\in (0,t_2)$ we find
\[
u(x,t)\geq U_{A}(x,t)
\]
for $|x|=R_1$.  Given the initial and boundary comparisons between $u$ and $U_A$, we may apply the comparison principle  on exterior domains to conclude  that $u(x,t)\geq U_{A}(x,t)$ in the whole outer cylinder $D\times (0,t_2)$,  hence the positivity of $u$ in that set. The length of $t_2$ depends of the boundary conditions of the functions we compare. But using a solution with larger constant $A$ we can take $t_2$ as large as we like, with a worse lower estimate valid up to $t=t_2$. This concludes the proof of positivity in the case of solutions with special data.

(4) As for the quantitative aspect, we can combine the bounds in a small domain and the corresponding outer domain into an estimate of the form
$$
u(x,t)\ge U_M(x, t+t_1)
$$
valid for some $t\ge t_1$, $x\in\ren$  and $M>0$ small enough. Translating into the $v$ variable the quantitative estimate follows. \qed

\medskip

\noindent {\bf II. General data.} {\sl { Let us consider a general integrable initial datum $u_0\ge 0$ with positive mass.}}

(1) According of regularity of weak solutions (see Subsection 2.1)   the nonnegative solutions are continuous in space and time for all positive times. Since the mass of the solution is preserved in time and the solution is continuous, then given any $t_0>0$ we may pick some $x_0\in\ren$ such that $u(x,t)\ge c_1$ for some constant $c_1>0$ in a neighborhood of $(x_0,t_0)$.  We can choose a small function $w(x)$ that is SSNI around $x_0$, compactly supported and such that $w(x)\le u(x,t)$ for $\bar t$ close to $t_0$. By the proof of the special case we have that for $\ve>0$ small enough the solution $u_1(x,t)$ starting at $\bar t=t_0-\ve$ with initial value $u_1(x,t_0-\ve)=w(x)$ is positive, and by comparison $u(x,t)\ge u_1(x,t)>0$  for all $x$ and for $t_0-\ve<t<t_0+t_2-\ve$. After checking that $t_2$ does not depend on $\ve$ we conclude that $u(x,t_0)>0$  for all $x$. We have obtained the infinite propagation of positivity of $u$ because $t_0$ is any positive time.

(2) A careful analysis of the argument shows that given any finite radius $R$  and $\tau_0$ small enough,  we can find a uniform lower bound for $v(y,\tau) $ valid for $y\in B_R(0)$ and any $\tau>\tau_0$.

\medskip

\begin{remark}
 We cannot obtain a uniform lower bound from below in the whole space since the solutions are supposed to decay as $|y|\to\infty$, like in the fundamental solution, see Theorem \ref{fundamental solution}.
\end{remark}

\begin{remark}\label{stima uniforme v}
{As a consequence of Theorem \ref{thm.genpos} and \eqref{Linfty-L1} we have that $v(y,\tau)$ has a lower and upper bound in a every space ball or in every space cube $Q(R)$ of side $R>0$ for every $\tau$ big enough ($\tau>\tau_1$). This means that in this interior parabolic cylinder $Q(R)\times (\tau_1,\infty)$, the solution is bounded above and below away from zero,
so it is the solution of a quasilinear non-degenerate equation, so that according to the  standard regularity theory for uniform elliptic and parabolic equations, see \cite{LSU}, the solutions are smooth inside the cylinder with uniformly local estimates on the derivatives.}
\end{remark}


\section{Asymptotic behaviour}\label{sec.asymp}

Once the unique  self-similar fundamental solution  $U_M$ of the form \eqref{sss} is determined for any mass $M>0$ , the experience in Nonlinear Diffusion dictates that it is natural to expect it to be the candidate to be attractor for a large class of  solutions to the Cauchy problem for equation \eqref{APM}. We have the stated the result in Theorem \ref{thmasympto} and we will prove it here.  We recall that $U_M$ is a self-similar fundamental solution stated in Theorem \ref{fundamental solution}.

\medskip

\noindent {\sl Proof  of Theorem \ref{thmasympto}. }
The proof    follows a method that is similar to the so-called ``four-step method'',
a general plan to prove asymptotic convergence devised by Kamin-V\'azquez \cite{KV88} and applied to the isotropic case, see also \cite[Theorem 18.1]{Vlibro}. Here, a number of variations  are needed to take care of the peculiarities of the  anisotropy.  For example, here we have uniqueness only for self-similar fundamental solutions with fixed mass $M$, while in the isotropic case we know the uniqueness of the fundamental solution.

The main effort will be concentrated on proving the asymptotic convergence  for $p=1$:
\begin{equation}\label{L1conv.1}
\lim_{t\rightarrow\infty}\|u(t)-U_{M}(t)\|_{L^1(\ren)}=0.
\end{equation}

\medskip

\noindent (1) We introduce the family of \emph{rescaled} solutions given by the $u_{k}$'s in \eqref{uk}, namely
\[
u_{\lambda}(x,t)=\mathcal{R}_\lambda (x,t)=\lambda^{\alpha}u(\lambda^{\sigma_1\alpha}x_1,..., \lambda^{\sigma_N\alpha}x_N,\lambda t).
\]
We observe that the mass conservation and the $L^{1}$-$L^{\infty}$ smoothing effect \eqref{Linfty-L1} allow to find by interpolation the uniform boundedness of the norms $\|u_{\lambda}(\cdot,t)\|_{p}$ for all $p\in[1,\infty]$ and $t>0$. Moreover, using \eqref{Energywholespace} and the algebraic identity \eqref{ab} we find, for all $t>t_{0}>0$,\
\begin{align*}
&\int_{t_{0}}^{t}\int_{\R^{N}} \left|\frac{\partial u_{\lambda}^{m_{i}}}{\partial x_{i}}\right|^{2}dx\,d\tau=\lambda^{\alpha(2m_{i}+2\sigma_{i}-1)-1}
\int_{\lambda t_{0}}^{\lambda t}\int_{\R^{N}} \left|\frac{\partial u^{m_{i}}}{\partial x_{i}}\right|^{2}dx\,d\tau \\
&\leq \lambda^{\alpha(2m_{i}+2\sigma_{i}-1)-1}\int_{\R^{N}}\left|u(x,\lambda t_{0})\right|^{m_{i}+1}dx\le  M  \lambda^{\alpha m_i} \|u(\cdot,\lambda t_0)\|_{L^\infty}^{m_i},
\end{align*}
and using the smoothing effect \eqref{Linfty-L1} we get
\begin{equation}\label{uniformspatdervlambda}
\int_{t_{0}}^{t}\int_{\R^{N}} \left|\frac{\partial u_{\lambda}^{m_{i}}}{\partial x_{i}}\right|^{2}dx\,d\tau\le C M^{ 1+2m_i\frac{\alpha}{N}}t_0^{-\alpha m_i},
\end{equation}
an estimate that is independent of $\lambda$. Thus,   for all $i$ the derivatives $\partial_{x_{i}}(u^{m_{i}}_{\lambda})$ are equi-bounded in $L^{2}_{x,t}$ locally in time.  Moreover, the $L^{1}$-$L^{\infty}$ smoothing effect \eqref{Linfty-L1} implies that $\left\{u_{\lambda}\right\}$ is equi-bounded in $L^{\infty}$  for $t\geq\varepsilon$.

(2) We continue with an equi-continuity argument. 
We may use Remark \ref{stima uniforme v} to derive the uniform local equi-continuity of the function $v(y,\tau)$ defined by  \eqref{NewVariables}.
 Then, along subsequences $\tau_k\to\infty$, we get (let us use $t_0=1$)
\[
v(y,\tau_k)\rightarrow \widehat F(y)\quad \textit{as } \tau_k\rightarrow+\infty,
\]
uniformly on compact sets on $\R^{N}$. We can easily check that $u_{\lambda}(x,t)$ is related to the $v(y,\tau)$ by the formula
$$
u_{\lambda}(x,t)=
\lambda^{\alpha}(\lambda t+1)^{-\alpha}v(\lambda^{\sigma_1\alpha}(\lambda t+1)^{-\alpha\sigma_1}x_1,..., \log(\lambda t+1))
$$
We now let $\lambda$ be large enough and consider a fixed finite range $t_1<t<t_2$. The relation then simplifies into
$$
u_{\lambda}(x,t)\approx
 t^{-\alpha}v(t^{-\alpha\sigma_1}x_1,...,t^{-\alpha\sigma_N}x_N, \log t +\log\lambda)
$$
\noindent as $\lambda\rightarrow+\infty$, which immediately shows that $\lambda $ only appears significantly as a time shift in $\tau$, the rest of the expression is almost the same.

 We also need to consider the dependence in time to conclude that the family $\left\{u_{\lambda}\right\}$ is relatively compact in $L^{1}_{loc}(\R^{N}\times(0,\infty))$.  We may use the argument of Lemma \ref{lemma.precomp} to that purpose.
We conclude that along a subsequence  $\lambda_k\rightarrow+\infty$

there is a continuous function $\widetilde{U}(x,t)$ such that
\begin{equation}\label{lambdak}
u_{\lambda_k}(x,t)\rightarrow\widetilde{U}(x,t)\quad\text{ as }\lambda_k\rightarrow+\infty
\end{equation}
uniformly in each compact subset of $\R^{N}\times(0,+\infty)$.  The positivity theorem \ref{thm.genpos} implies that $\widetilde{U}$ is continuous.

\medskip

\noindent (3) We prove that $\widetilde{U}$ is a solution to \eqref{APM}. In order to pass to the weak limit in the weak formulation for the $u_{\lambda}$'s, we use the locally uniform convergence and uniform-in-time energy estimates of the spatial derivatives  $\partial_{x_{i}}u^{m_{i}}_{\lambda}$ for all $i=1,\cdots,N$, obtained in \eqref{uniformspatdervlambda}. Hence, adapting the proof of \cite[Lemma 18.3]{Vlibro} yields that $\widetilde{U}$ solves \eqref{APM} for all $t>0$. It must have a certain mass $M_1$ at each time $t>0$.

\noindent (3b) Now we show that the mass of $\widetilde{U}$ is just $M$. Arguing as in \cite[Theorem 18.1]{Vlibro}, first we further assume that $u_{0}$ is bounded and compactly supported in a ball $B_{R}(0) $ with mass $M$. Let us take $L_1>\sup u_0$ and a larger mass $M^{\prime}>M$ such that the upper barrier $ G_{k} (y)$ defined in \eqref{G_k} is such that $u_0(y)\leq  G_{k} (y)$. We recall that $ G_{k} (y)\in L^1(\mathbb{R}^N)$. By Theorem \ref{thm.barr} and change of variables (with $t_0=1$) it follows that
$$u(x,t)\leq (t+1)^{-\alpha}  G_{k} (x_1(t+1)^{-\alpha \sigma_1},\cdots,x_N(t+1)^{-\alpha \sigma_N})$$
for all $t>0$ and for all $x\in \mathbb{R}^N$. Then
\begin{equation}\label{STIMA 1}
u_\lambda(x,t)\leq \lambda^{\alpha} (\lambda t+1)^{-\alpha} G_{k} (\lambda^{-\alpha \sigma_1}x_1(\lambda t+1)^{-\alpha \sigma_1},\cdots,\lambda^{\alpha\sigma_N}x_N(\lambda t+1)^{-\alpha \sigma_N})
\end{equation}
for all $t>0$ and for all $x\in \mathbb{R}^N$. We observe that
\begin{equation}\label{STIMA 2}
\begin{split}
\lim_{\lambda\rightarrow +\infty}& \lambda^{\alpha} (\lambda t+1)^{-\alpha} G_{k} (\lambda^{\alpha \sigma_1}x_1(\lambda t+1)^{-\alpha \sigma_1},\cdots,\lambda^{\alpha\sigma_N}x_N(\lambda t+1)^{-\alpha \sigma_N})
\\
&=t^{-\alpha} G_{k} (t^{-\alpha \sigma_1}x_1,\cdots,t^{-\alpha \sigma_N}x_N)
\end{split}
\end{equation}
and the mass is preserved.
The previous facts, the convergence $u_{\lambda}\rightarrow  \widetilde{U} $ a.e. in $\mathbb{R}^N$ and \eqref{STIMA 1} allow to apply Lebesgue dominated convergence Theorem, obtaining
\[
u_{\lambda}(t)\rightarrow \widetilde{U}(t)\quad \text{in}\, L^{1}(\R^{N}),
\]
which means in particular that the mass of $\widetilde{U}$ is equal to $M$ at any positive time $t$. Thus, we have obtained that $\widetilde{U}$ is a fundamental solution with initial mass $M$. If we knew  this fundamental solution to be self-similar, then the uniqueness theorem would imply $\widetilde{U}(x,t)=U_M(x,t)$.

\medskip

(4)  We need another step to make sure that $\widetilde{U}(x,t)=U_M(x,t)$. In order to use ideas that are already in the paper we may use the Lyapunov functional
$$
J[u,U_M](t)=\int_{\ren} |u(x,t)-U_M(x,t)|\,dx.
$$
This is known to be nonnegative and non-increasing in time along solutions $u(x,t)$. Using the rescaling we get for all
$\lambda>1$
$$
J[u_\lambda,U_M](t)=\int_{\ren} |u_\lambda(x,t)-U_M(x,t)|\,dx= \int_{\ren} |u(y,\lambda t)-U_M(y,\lambda t)|\,dy=
J[u,U_{M}](\lambda t)
$$
(we use the scaling invariance $(U_M)_\lambda=U_M$), which proves that $J[u_\lambda,U_M](t)$ is non-increasing in $\lambda$ for fixed $t>0$. Therefore, we have the common limit
$$
\lim_{\lambda \to\infty}J[u_\lambda,U_M](t)=\lim_{t\to\infty} J[u,U_M](t) = C_\infty\ge0.
$$

\begin{lemma}\label{lemma.cinfty} We necessarily have $C_\infty=0.$
\end{lemma}
\begin{proof} We exclude the case $C_\infty>0$ as follows. Let $\lambda_k$ the sequence mentioned above that produces the limit $\widetilde U$ as in \eqref{lambdak}. Passing to the limit $\lambda_k\to \infty$ we get for every $t>0$
$$
J[\widetilde U,U_M](t)=\lim_{k\to \infty}J[u_{\lambda_k},U_M](t)= C_\infty>0.
$$
This is a peculiar situation where two solutions with the same mass have constant $L^1$ difference in time. We exclude the situation by considering $U^*$ the maximum of the two solutions and checking, as in point (iii) of the existence proof in Subsection \ref{existself} (see also the uniqueness proof in Subsection \ref{sec.uniq}), that $U^*$ must also be a solution of the equation. The argument follows by  observing that $U_M$ is positive everywhere, hence also $U^*$ is positive, and both are bounded for $t\ge 1$ by the smoothing effect. Moreover, $\widetilde U$ and $U^*$ are not the same for any $t>1$ since they differ in $L^1$ norm. Hence, they must intersect and there must be a point $x_0\in \ren$ such $U^*(x_0,1)=U_M(x_0,1)=\widetilde U(x_0,1)$. By continuity we know that $U^*(x,t)$ and $U_M(x,t)$ are bounded above and below away from zero in some neighbourhood $D$ of $(x_0,1)$, so they solve a quasilinear non-degenerate parabolic equation in divergence form in $D$. Since $U^*(x,t)\ge U_M(x,t)$, we can apply the strong maximum  principle \cite{LSU, PS2007} to conclude that $U^*(x_0,1)=U_M(x_0,1)$ is only possible if they also agree on a maximal connected domain, in particular for all $x\in\ren$ and $t=1$. This is a contradiction, hence  $C_\infty=0.$

\qed

(4b) Once we have $C_\infty=0$ we may join this and \eqref{lambdak} to get the conclusion that for any limit of a subsequence $\widetilde U=U_M$, and this is the $L^1$ convergence formula \eqref{L1conv} that we were aiming at. We recall that this was proved under the assumption that $u_0$ is bounded and has compact support.
The general case $u_{0}\in L^{1}(\R^{N}), u_0\ge 0$ follows as in \cite[Theorem 18.1]{Vlibro} by approximation of the initial data,  using the $L^1$-contractivity of the flow and the continuity of $U_M$ with
respect to $M$. Recall that given two self-similar fundamental solutions $U_{M_1},U_{M_2}$ with two different masses $M_1$ and $M_2$ we get $$\|U_{M_1}-U_{M_2}\|_{1}=\|F_{M_1}-F_{M_2}\|_{1},$$
where $F_{M_1},F_{M_2}$ are their profiles respectively. Proposition \ref{prop.mon.M} guarantees that for $M_1>M_2$ we have $F_{M_1}\ge F_{M_2}$ so that
$$
\|F_{M_1}-F_{M_2}\|_{1}=\int_{\ren} (F_{M_1}(x)-F_{M_2}(x))\,dx= M_1-M_2.
$$

\noindent {\bf $L^p$-convergence for $p>1$}. This is an easy consequence of the convergence in $L^1$ and uniform boundedness in $L^\infty$ by observing that
\begin{align*}
\|u(\cdot,\lambda)-U_{M}(\cdot,\lambda)\|^p_{p}& \leq \|u(\cdot,\lambda)-U_{M}(\cdot,\lambda)\|_1 \|u(\cdot,\lambda)-U_{M}(\cdot,\lambda)\|^{p-1}_\infty.
\end{align*}
 The first factor is estimated by the $L^1$ convergence \eqref{L1conv} as $o(1)$ when $t\to\infty$, while the terms $\|u(\cdot,\lambda)\|_\infty$ and $ \|U_{M}(\cdot,\lambda)\|_\infty$ are estimated as a constant times $t^{-\alpha}$ by the smoothing effect of Theorem \ref{L1LI}. In this way we get \eqref{Lpconv}. Note that in rescaled variables it reads
$$
\|v(\cdot,\lambda)-F_{M}(\cdot)\|_{p}\to 0 \quad \mbox{as} \ \lambda\to\infty,
$$
 recalling that $U_M$ is given by \eqref{sss} in terms of the self-similar profile $F_M$.
\end{proof}

\begin{remark}
\begin{itemize}
\item[i)] We stress that no    B\'enilan-Crandall estimate for the time derivative $\partial_{t}u$    \cite{BC81}   is available (contrary to the isotropic case), therefore in the proof we need a novel argument to obtain relative compactness in $L^{1}_{loc}(\R^{N}\times(0,\infty))$.

\item [ii)]  Since $\cup_{L_1\geq0}\mathcal K(L_1)$ is dense in $L^1(\R^{N})$, we could start to prove Theorem 1.2 for initial data in $\mathcal K(L_1)$, where $\mathcal K(L_1)$ is defined in Step 1 of Proof of existence of a self-similar solution (Section 6.2). Then, we could conclude by density and by $L^1$-contraction.
    \end{itemize}
\end{remark}

Actually, we have a stronger asymptotic convergence result under extra conditions.

\begin{theorem}\label{thm8.2} If the initial datum $u_0$ is nonnegative, bounded and compactly supported, then we also have
\begin{equation}\label{con L infty}
\lim_{t\rightarrow\infty}t^{\alpha}\|u(t)-U_{M}(t)\|_{\infty}=0,
\end{equation}
where $\alpha$ is given by \eqref{alfa}.
\end{theorem}
\begin{proof} From the proof of Theorem  \ref{thmasympto} we already know that the solution $v(y,\tau)$ converges uniformly on the  compact  sets of $\mathbb{R}^N$  to $F_{ M  }(y)$ as $\tau\rightarrow+\infty$, thus the only thing to check is the control of the tails. We can use  the explicit upper barriers of  Section \ref{sec.upp} or a large rescaling thereof to bound above our solution for all times and thus control the decay of our solution at spatial infinity for all times (see Theorem \ref{thm.barr}). If $Q(R)$ is the space cube of side $R>0$, we deduce that $\forall \varepsilon>0$ there exist $R=R(\varepsilon)$  such that
$$
\|v(\cdot, \tau)\|_{L^\infty(\mathbb{R}^N\setminus Q(R))}\leq \varepsilon \, \text{ for all } \tau>0.
$$
Then we conclude that
\[
\|v(\cdot,\tau)-F_{ M  }(y)\|_{L^\infty(\R^{N})}\rightarrow0,
\]
which translates into \eqref{con L infty}.
\end{proof}
\vspace{-0.5cm}
\section{Numerical studies}\label{se.numer}

In this section we show the results of numerical computations with the evolution process that show the appearance of an elongated self-similar profile. We compute  in 2 dimensions for simplicity and plot the level lines to show the anisotropy.
\vspace{-0.5cm}
         \begin{figure}[H]
           \centering
               \includegraphics[scale=0.28]{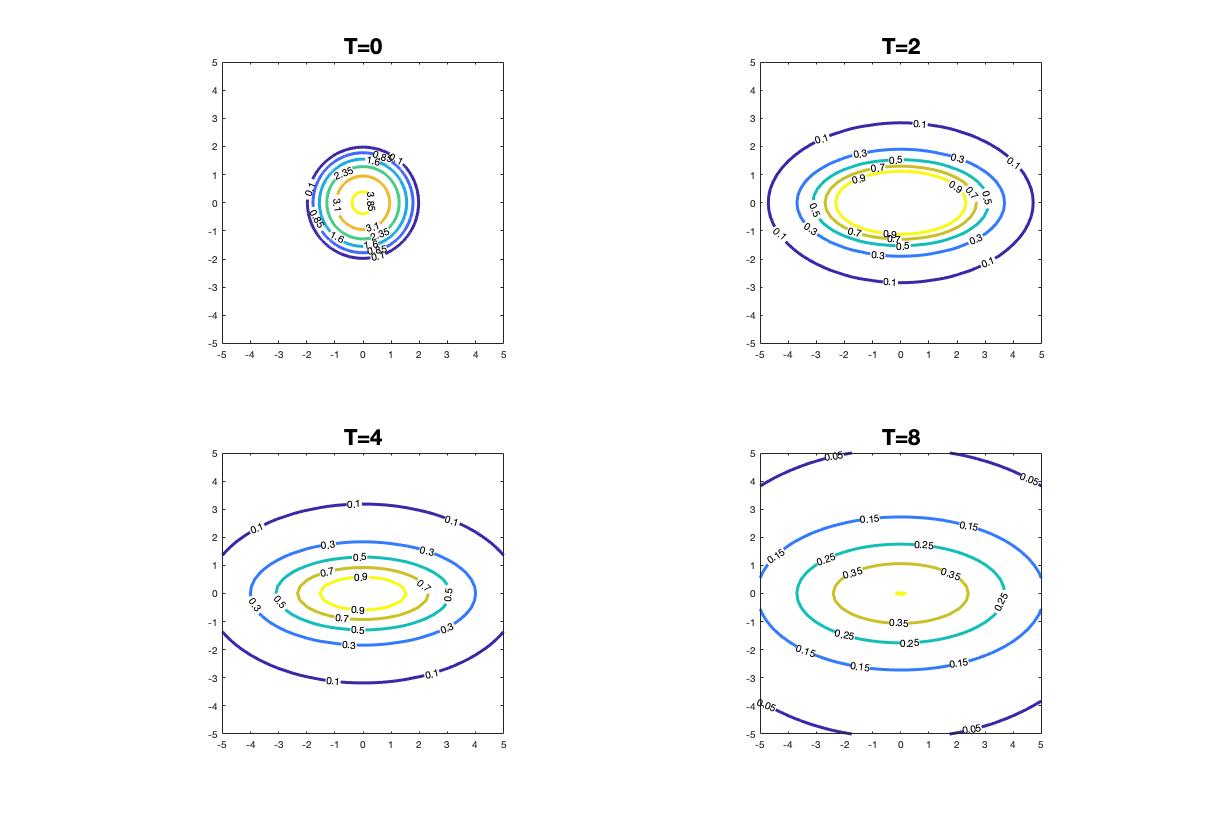}
\vspace{-1cm}
 \caption{Evolution from radial data to an anisotropic self-similarity}
  \label{fig3}
          \end{figure}
\vspace{-1.0cm}
          \begin{figure}[H]
           \centering
              \includegraphics[scale=0.28]{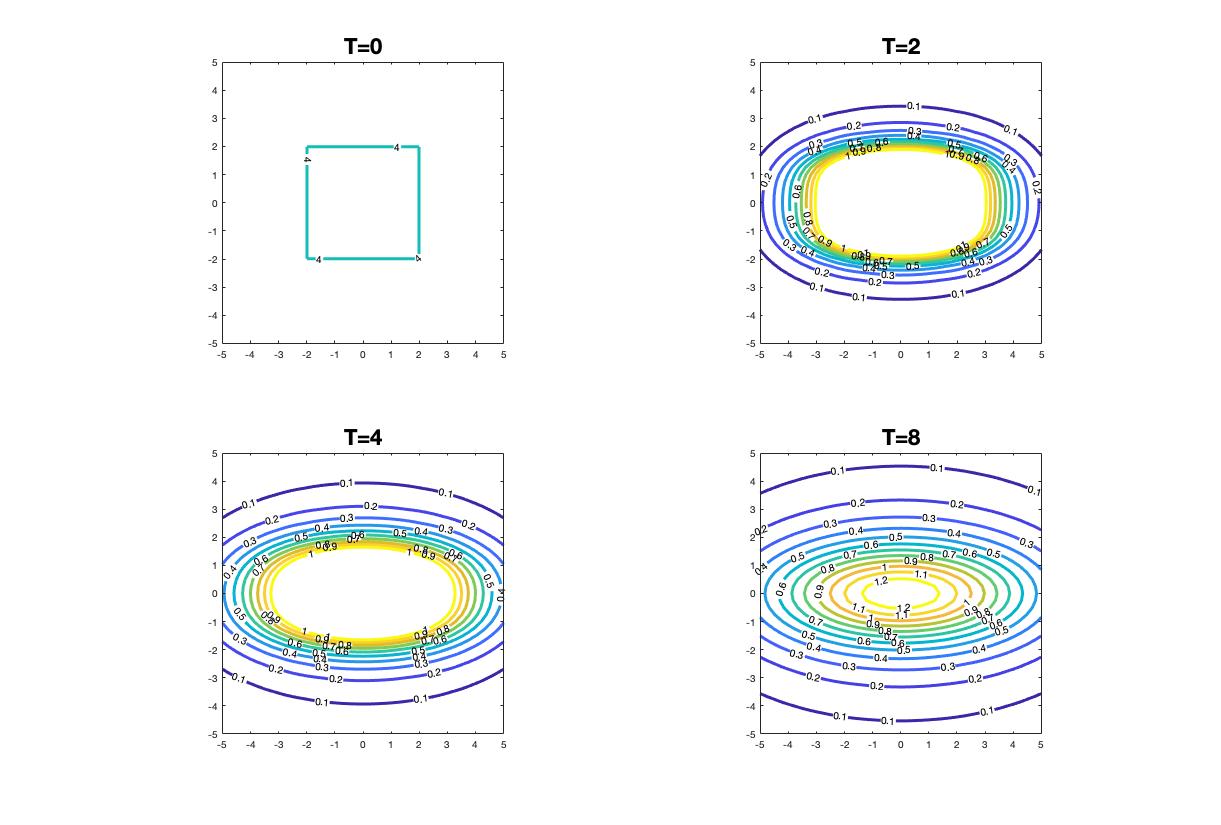}
              \caption{Evolution from a square configuration}
              \label{fig2}
          \end{figure}

\section{Fast diffusion combined with partial linear diffusion}\label{sec.meq1}

 This section contains a number of remarks when some of the $m_i$ equals one. If we revise the general theory: existence, uniqueness, continuity, smoothing effects and Aleksandrov principle, we see they work fine when one (or several exponents) are 1. Indeed it is possible to build an upper barrier (see Proposition \eqref{P1}) and the self-similarity works in the same way as before. Only the lower bound cannot be the same.

Let us make some computations. In particular, $m_1=1$ implies that
$$
\sigma_1=\frac1{N}+\frac{\overline{m}-1}2, \quad a_1=\alpha\,\sigma_1=\frac12,
$$
which is the heat equation scaling. On the other hand, if we write $x=(x_1,x')$, integrate in the rest of the variables
$x'=(x_2,\cdots, x_N)$, and put
$$
w(x_1,t)=\int_{\re^{N-1}}u(x_1,x', t)\,dx_2\dots dx_N\,,
$$
it is easy to see that $w$ satisfies a 1D Heat Equation: $w_t=w_{x_1x_1}$. When we apply the previous argument to a fundamental solution
we will find the 1D fundamental solution
$$
W(x_1,t)=(4\pi t)^{-1/2}e^{-x_1^2/4t}.
$$
If we write this formula  in terms of the fundamental solution profile we get
$$
\int_{\re^{N-1}} F(y_1,y')\,dy_2\dots dy_N=(4\pi)^{-1/2}e^{-y_1^2/4}.
$$
This means that in this direction the fundamental solution decreases in average like a negative quadratic exponential and not like a power.

\medskip

As announced before, when some $m_i=1$ we need to build a different barrier.

\begin{proposition}\label{P1}
Let \ $ \alpha, a_i, \sigma_i$ be defined in \eqref{alfa}-\eqref{ai}. Take $\delta>0$ and $\theta_i\geq 2$ such that
\begin{equation}\label{delta bis}
\frac1{\sigma_i}<\delta\theta_i<\frac2{1-m_i}\,.
\end{equation}
Let $\Omega_r=\{y\in \mathbb{R}^N: \ \sum_{i=1}^N|y_i|^{\theta_i}\ge r\}$ be an anisotropic outer domain, where
$r>0$ is given by
\begin{equation}\label{r}
r:=\max_i\left(\frac{N\delta m_i(\delta m_i+1)\theta_i^2}{(\delta\min_i\{\sigma_i\theta_i\}-1)\alpha}\right)^{\frac{1}{2/\theta_i-\delta(1- m_i)}}.
\end{equation}
Then the function
\begin{equation}\label{outer.barr}
\overline{F}(y)=\left( \sum_{i=1}^N |y_i|^{\theta_i} \right)^{-\delta}
\end{equation}
is a supersolution to equation \eqref{StatEq} in the domain $\Omega_r$ and  $\overline{F}\in L^1(\Omega_r)$.
\end{proposition}

\noindent {\bf Remarks.}
1) We observe that under the assumptions $0<m_i\leq1$ for all $i$, (H2) and recalling \eqref{ai}, condition \eqref{33} fulfills, where $1/(1-m_i):=+\infty$ if $m_i=1$.

2) In the choice of exponents for the supersolution we can take $\theta_i\delta$ as close as we want to the dimensional exponent $2/(1-m_i)$.

 3) Completing $\overline{F}$ inside the inner domain $D_r=\ren\setminus \Omega_r$ by the constant $\max_{y\in \Omega_r}\overline{F}(y)$ we obtain the global function
$$
G=\min\,\{\overline{F}(y),\max_{y\in \Omega_r}\overline{F}(y)\} \in L^1(\ren)
$$
This is the type of function we will use, after a suitable rescaling, as a barrier in our comparison theorem (see Theorem \ref{thm.barr}).

 4) For another upper barrier construction  see \cite[Lemma 2.3]{SJ05}.
\qed

\

\noindent {\sl Proof of Proposition \ref{P1}}. If we put $X=\sum_{j=1}^N|y_j|^{\theta_j}$, then since $\theta_i\geq1$ we get
\begin{equation*}
\begin{split}
I:=&\sum_{i=1}^{N}\left[(\overline{F}^{m_i})_{y_iy_i}+a_i\left( y_i \overline{F}\right)_{y_i}
\right]
\\
&\leq
\sum_{i=1}^{N}\delta m_i(\delta m_i+1)\theta_i^2X^{-\delta m_i-2}X^{2-2/\theta_i}
+\alpha\left[1-\delta\min_i\{\sigma_i\theta_i\}\right]X^{-\delta},
\end{split}
\end{equation*}
where $1-\delta\min_i\{\sigma_i\theta_i\}<0$ by \eqref{delta bis}. In order to conclude that $I\leq0$ it is enough to show that
\begin{equation*}
\left[\delta m_i(\delta m_i+1)\theta_i^2X^{\delta-\delta m_i-2/\theta_i}
+\frac{\alpha}{N}\left(1-\delta\min_i\{\sigma_i\theta_i\}\right)\right]\leq0
\end{equation*}
for every $i=1,..,N$, where $\delta-\delta m_i-2/\theta_i<0$ by \eqref{delta bis}.
Then we have to require $X\geq r$ with $r$ given by \eqref{r}. This together with  Lemma \ref{Lemma Song} completes the proof.\qed

In this range we set as barrier a suitable rescaled  (according to formula
\eqref{Tk}) $\mathcal{T}_k\overline{F}(y)$ of $\overline{F}(y)$, the function given in \eqref{outer.barr} defined in the exterior domain $\Omega_r$, defined in Proposition \ref{P1} (see Fig. \eqref{fig:boat2}).
We stress that in \eqref{Tk} $\nu_i=(1-m_i)/2$ $\nu_i=0$ if $m_i=1$. In this way the inner hole $D=\ren\setminus \Omega_r$ changes into
$$
{\mathcal T}_k D=\{y=(y_1,\cdots,y_N)\in \mathbb{R}^N: \ \sum_i(k^{\nu_i}y_i)^{\theta_i}< r\}
$$
that we can make  as small as we want if $k$ is large. Note that this changes the mass ({or the $L^1$ norm)
\begin{equation}\label{Change mass bis}
\int_{\mathcal{T}_k\Omega_{r}} F_k(y)dy= k\int_{\mathcal{T}_k\Omega_{r}} F(y_i\,k^{\nu_i})\,dy=k^{\beta} \,\int_{\Omega_r} F(z)\,dz\,,
\end{equation}
where $\beta=1-\sum_i\nu_i=1-N(1-\overline {m})/2\in (0,1)$ and we denote the rescaled set of $\Omega_r$ by
\begin{equation}\label{Tk Omega bis}
\mathcal{T}_k\Omega_r=\left\{y \in\mathbb{R}^N:(k^{\gamma_1}y_1,\cdots,k^{\gamma_N}y_N )\in \Omega_r \right\}=\{y\in \mathbb{R}^N: \ \sum_{i=1}^N k^{\gamma_i}|y_i|^{\theta_i}\ge r\}.
\end{equation}
In order to have a global barrier, we will extend $\mathcal{T}_k\overline{F}(y)$ outside $\mathcal{T}_k\Omega_r$ by $\max\{\mathcal{T}_k\overline{F}(y): y\in \mathcal{T}_k\Omega_r\}$, \textit{i.e.,} the value it takes at the boundary of $\mathcal{T}_k\Omega_r$.
Then in Theorem \ref{thm.barr} the barrier \eqref{G_k}  has to be replaced by
\begin{equation}\label{G_k bis}
G_{k}(y)=\min\{
\mathcal{T}_{k}\overline{F}(y),
\max_{\mathcal{T}_{k}(\Omega_r)}\mathcal{T}_{k}\overline{F}(y)\}
\end{equation}
for every $k>0$, where $\mathcal{T}_k$ is defined in \eqref{Tk} and $\overline{F}$ is given in \eqref{outer.barr}.

\medskip

Finally we observe that Theorem \ref{contraction} holds if some $m_i=1$ as well but we will do some modifications to its proof.

\textit{Proof of Theorem \ref{contraction} if some $m_i=1$.} We need a variant of the argument of point (ii) presented in Subsection \ref{subsec_contraction}. We may assume that $m_i=1$  for $i=1,\dots,j_0$, $j_0<N$, and  $m_i<1$ for all $i> j_0$.

The idea is to fix the scaling of $\zeta_1$ as a factor $n^{-(1-m_i)}$ for the directions with $m_i<1$ (as before), and insert a factor $1/n^{\delta}$ for all $m_i=1$: to be more precise, we set
\[
\zeta_{n}(x)=\zeta_{1}(n^{-\delta}x_{1},...,n^{-\delta}x_{j_{0}},n^{-(1-m_{j_{0}+1})}x_{j_{0}+1},...,n^{-(1-m_{N})}x_{N}).
\]
Here, $\delta>0$ is small, as needed below. Repeating the above calculation, the terms with $m_i=1$ contribute to the  formula. Then we have
\begin{equation}\label{ODE1bis}
\frac{dX_n}{dt}\leq \sum_{i=1}^{j_{0}}\int_{\R^{N} } (u_{1}-u_{2})_{+}\,|\partial_{x_{i}x_{i}}\zeta_{n}(x)|dx +\sum_{i=j_{0}+1}^{N}\int_{\R^{N} } (u_{1}^{m_i}- u_{2}^{m_i })_{+}\,|\partial_{x_{i}x_{i}}\zeta_{n}(x)|dx.
\end{equation}
First we estimate the first term in the right-hand side of \eqref{ODE1bis} obtaining
\[
\sum_{i=1}^{j_{0}}\int_{\R^{N}} (u_{1}-u_{2})_{+}\,|\partial_{x_{i}x_{i}}\zeta_{n}(x)|dx\leq C\frac{1}{n^{2\delta}}
\int_{\R^{N}} (u_{1}-u_{2})_{+}dx,
\]
that goes to zero as $n\to\infty$.  Moreover, the second term in the right-hand side of \eqref{ODE1bis}, takes into account contribution of the terms with $m_i<1$. Then, arguing similarly as for the estimate \eqref{E4} we find

\begin{equation*}\label{E4}
\frac{dX_n}{dt}\le C\frac{1}{n^{2\delta}}
\int_{\R^{N}} (u_{1}-u_{2})_{+}dx+c\ve \,X_n(t)+\max_{i} C(\ve,m_{i})\,K^{\prime}_n,
\end{equation*}
where
$$K_n'=\sum_{i=j_0+1}^N (1-m_i)\,\int_{\R^{N}} \left(\zeta_{n}^{-m_i}\,|\partial_{x_{i}x_{i}}\zeta_{n}|\right)^{1/(1-m_i)}\,dx.$$
As in (iv) it is  easy to see that $K_n'= K'_1 n^{-\gamma'}$ with
$$
\gamma'= 2-\sum_{i=j_0+1}^N (1-m_i)-j_0 \delta.
$$
This quantity is still larger than zero if $\delta$ is small enough. We conclude as in Subsection \ref{subsec_contraction}.
\qed

There is no novelty in the Aleksandrov principle or the quantitative positivity lemma of Section  \ref{sec.qual} so that the proof of existence and uniqueness of a solution as in Theorem \ref{fundamental solution} can be done as in Section \ref{sec.ex.ssfs}. However, the quantitative study of positivity and the ensuing regularity need new work that we will address here because the paper is already long and the issue bears on different type of estimates.


\section{Comments, extensions and open problems}

 \noindent {$\bullet$} We can get explicit solutions with infinite mass and decay  estimates in terms of one-dimensional traveling waves. These solutions are explicit and decay in a chosen coordinate direction like
  $$
 u(x,t)\sim C(x_i-ct)^{-1/(1-m_i)}, \qquad x_i>ct.
 $$
Note  that the decay of the self-similar solution in $N$ variables along the $x_i$ axis is  $u(x,t)\sim |x_i|^{-2/(1-m_i)}$, while here the decay exponent is half as much. In any case the decay rate depends on the coordinate exponent. Note also that this fast diffusion wave is only defined in a moving domain $\{x_i>ct\}$ with a strong singularity on the lateral border of that domain. The solution is classical inside its domain.

 We can also compare solutions for different dimensions.

 \noindent {$\bullet$} We have the project of studying the existence of  self-similar fundamental  solutions for the slow case, also called Porous  Medium  case, where at least some of the $m_i$ are greater than 1. The main difference is the existence of compact support in some directions.


 \noindent {$\bullet$}  A very detailed analysis of the so-called  {anisotropic $p$-Laplacian evolution of fast diffusion type} was done subsequently by the authors in \cite{FVV21} following the main ideas of this paper. Variants, improvements and further details of the technique were described, as well as a comparison between results for both types of equation. For more information on quite general anisotropic $p$-Laplacian equations see for instance \cite{AS2015}.

  \noindent {$\bullet$} We have not studied what happens in the anisotropic FDE when $\overline{m}\le m_c$. The isotropic case is well-known by now and it is full of new phenomena and difficulties. See \cite{VazSmooth}   and \cite{DK07}.

  \noindent {$\bullet$} Can we accept negative powers $m_i<0$?  See as references in the isotropic case \cite{BBDGV, Vne}.

\noindent {$\bullet$}  Question: do we have explicit solutions in some cases? This happened in the isotropic case,  where ODE methods could be used, see \cite{BV2006, VazSmooth}.

\noindent {$\bullet$}  An interesting problem consists of posing our anisotropic equation in a bounded domain with suitable boundary conditions. We did not find an interesting relationship to our problem \eqref{APM}--\eqref{IC}.

\noindent {$\bullet$} Another open question: which anisotropic Gagliardo-Nirenberg-Sobolev inequalities would be natural ones associated to this anisotropic  fast diffusion equation flow?

\noindent {$\bullet$}    The isotropic analogue of asymptotic behaviour proposed in Theorem 1.2 is optimal (no rates in general are possible). Indeed,  sending the center of mass at infinity destroys any rate. Does the same phenomenon happen here ?

    Moreover, in the isotropic case the convergence in relative error or any Global Harnack principle holds true, as conjectured by Carrillo-V\'azquez \cite{CV03} and proven in \cite{BS}. Finally in literature appropriate tail
conditions allow to have sharp rates of convergence to equilibrium in rescaled variables (for example via
a nonlinear entropy method). These two questions could be investigated in the anisotropic case.

\medskip

\section{Appendix: proof of the smoothing effect}

The proof of Theorem \ref{L1LI} is given in Theorem 1.2 of Song-Jima \cite{SJ06}, but for reader's convenience we give more details. In order to do this we recall some anisotropic Sobolev inequalities.

Let us denote    $\widetilde{r}$   the harmonic mean of $   r_1,\cdots,r_N  \geq1$,\textit{ i.e.} $   \frac{1}{ \widetilde{r}}=\sum_{i=1}^N\frac{1}{r_i}.$

\begin{proposition}\emph{(}see \cite{DFZ}\emph{)}
Let $   \lambda_i >0$ and $1\leq \widetilde{p}<N$. Then for every nonnegative function $u\in C^\infty_0(\mathbb{R}^N)$ we have
\begin{equation}\label{Sobolev 0}
\left\|u^{\overline{{\lambda}}}\right\|_{L^{\widetilde{r}^*}}\leq C_S
\left\|\left(\prod_{i=1}^{N}\left|\partial_{x_i} u^{\lambda_i}\right|\right)^{1/N}\right\|_{L^{\widetilde{r}}},
\end{equation}
where $   \widetilde{r}^*=\frac{N\widetilde{r}}{N-\widetilde{r}} ,\overline{\lambda}=\frac{1}{N}\sum_{i=1}^N\lambda_i$ and $C_S$  is a positive constant depending on $N$ and $   \widetilde{r} $.
\end{proposition}

We stress that in \cite{DFZ} inequality \eqref{Sobolev 0} is proved for Lorentz norms. A proof using directly Sobolev norm can be obtained adapting Trois's proof \cite{Tr}. Now we use the following lemma to obtain the usual form of anisotropic inequalities that involves the product of the norms of the partial derivatives in $L^{   r_i  }$ with $   r_1,\cdots,r_N  \geq1$ (see \cite{Tr}).

\begin{lemma}\label{Lemma tetai}\emph{(}see \cite[page 43]{KPS}\emph{)} Let $X$ be a rearrangement invariant space and let $
0\leq \theta _{i}\leq 1$ for $i=1,...,M,$ such that $\sum_{i=1}^{M}\theta _{i}=1$, then
\begin{equation*}
\left\| \prod_{i=1}^{M}|f_{i}|^{\theta _{i}}\right\| _{X}\leq
\prod_{i=1}^{M}\Vert f_{i}\Vert _{X}^{\theta _{i}} \quad \forall f_i\in X.  \label{holder
X}
\end{equation*}
\end{lemma}

Indeed taking $\frac{1}{\widetilde{   r  }}=\frac{1}{N}\sum_{i=1}^N\frac{1}{   r_i  }$, $X=L^{\widetilde{   r  }}(\Omega)$, $\theta_i=  \frac{\widetilde{r}}{r_iN} $, Lemma \ref{Lemma tetai} and \eqref{Sobolev 0} yield  that
\begin{equation}\label{Sobolev prod}
\left\|u^{\underline{{\lambda}}}\right\|_{L^{\widetilde{   r  }^*}}\leq C_S \prod_{i=1}^N\left\|\partial_{x_i}u^{\lambda_i}\right\|^{1/N}_{L^{   r_i }}\quad \forall u\in C^\infty_0(\mathbb{R}^N).
\end{equation}
Finally using the well-known inequality between geometric and arithmetic means we get
\begin{equation}\label{Sobolev sum}
\left\|u^{\underline{{\lambda}}}\right\|_{L^{\widetilde{   r  }^*}}\leq \frac{C_S}{N} \sum_{i=1}^N\left\|\partial_ {x_i}u^{\lambda_i}\right\|^{1/N}_{L^{   r_i  }}\quad \forall u\in C^\infty_0(\mathbb{R}^N).
\end{equation}

\medskip

In the case $\widetilde{   r  }=N$ the following result holds.

\begin{proposition}
Let $\lambda_i\geq0$ (but not both identically zero) and $   r_1,\cdots,r_N  \geq1$ be such that $\widetilde{   r  }=N$. Then for every nonnegative functions $u\in C^\infty_0(\mathbb{R}^N)$ we have
\begin{equation}\label{Sobolev pN bis}
\left\|u^{\underline{{\lambda}}}\right\|_{L^{q}}\leq \frac{K_S}{N}\left[\left\|u^{\underline{{\lambda}}}\right\|_{L^N}+
\sum_{i=1}^{N}
\left\|\partial_{x_i} u^{\lambda_i}\right\|_{L^{   r_i  }}\right]
\end{equation}
for all $q\geq N$ and $K_S$  is a positive constant depending on $N$ and $   r_1,\cdots,r_N  $.
\end{proposition}

\noindent {\sl Proof.}
We can argue as in \cite[Corollary IX.11]{Libro Brezis} starting from \eqref{Sobolev 0} with $\widetilde{   r  }=1$. At the end we apply  Lemma \ref{Lemma tetai} and the well-known inequality between geometric and arithmetic means to conclude.
\qed

\bigskip

As a first step we obtain a bound of the $L^\infty$ norm in terms of the $L^p$ norm of the initial
datum for every $p>1$.

\begin{theorem} Let assume $m_1,\cdots,m_n>0$ such that (H1) and (H2) is in force and take $p>\max\{1,(1-\bar m)N/2)\}$. Then for every $u_0\in L^1(\mathbb{R}^N)\cap L^{p}(\mathbb{R}^N)$, the solution to problem \eqref{APM} satisfies

\begin{equation}\label{Linfty lp}
\|u(t)\|_\infty \leq C t^{-\gamma_p} \|u_0\|^{\delta_p}_p
\end{equation}
with $\gamma_p=(\bar m - 1 + 2p/N)^{-1}$, $\delta_p=2p\gamma_p/N$, the constant C depends on $\bar m,p$ and $N$.
\end{theorem}

\noindent {\sl Proof.} We use a classical parabolic Moser iterative technique. Without lost of generality we can assume that $u$ is smooth. Indeed such assumption can be removed by approximation as in Subsection \ref{ssec.approx}.

\medskip
Case 1: $N>2$.

Let $t>0$ be fixed, and consider the sequence of times $t_k=(1-2^{-k}))t$. As in the proof of Proposition \ref{decay of the $L^p$} we multiply the equation by $|u|^{p_k-2}u$, $p_k\geq p_0>1$, we integrate in $\mathbb{R}^N\times[t_k, t_{k+1}]$. Using Sobolev inequality \eqref{Sobolev sum} with $   r_i  =2$ and $\lambda_i=\frac{m_i+p_k-1}{2}$ and  the decay of the $L^p$ norms given in Proposition \ref{decay of the $L^p$} we get
\begin{equation}
\begin{split}
\|u(t_{k+1})\|_{p_{k+1}}\leq \left[4 C_S
p_k(p_k-1) \min_i \frac{m_i}{(m_i+p_k-1)^2}2^{-(k+1)}t\right]^{-\frac{s}{p_{k+1}}}\|u(t_k)\|_{p_k}^{\frac{sp_k}{p_{k+1}}},
\end{split}
\end{equation}
where $p_k+1=s(p_k+\bar m-1)$ and $s=\frac{N}{N-2}$.

First of all we observe that taking as starting exponent $p_0=p>\max\{1,\frac{N}{2}(1-\bar m)\}1$ it is easy to obtain the value of the sequence of exponents,
$$p_k = A(s^k-1)+p, \quad \text{ with }A=p+(\bar m-1)\frac{N}{2}>0.$$
In particular we get $p_{k+1}>p_k$, with $\lim_{k\rightarrow +\infty} p_k =+\infty$. {Observe that
$$\frac{1}{p_k(p_k-1)\min_i\frac{m_i}{(m_i+p_k-1)^2}}=\frac{(m_j+p_k-1)^2}{p_k(p_k-1)m_j}\leq
\frac{(m_j+p_k-1)^2}{(p_k-1)^2m_j}$$
for some $j$. Moreover,
$$\frac{1}{2C_S}\frac{(m_j+p_k-1)^2}{(p_k-1)^2m_j}\leq \frac{1}{2C_S} \frac{(m_j+p-1)^2}{(p-1)^2m_j}:=c$$}
Now, if we denote $U_k=\|u(t_k)\|_{p_k}$, we have
$$U_{k+1}\leq 2^{\frac{ks}{p_{k+1}}}c^{\frac{s}{p_{k+1}}} t^{-\frac{s}{p_{k+1}}}U_k^{\frac{sp_k}{p_{k+1}}}.$$
This implies
$$U_k\leq 2^{\alpha_k} c^{\beta_k}t^{-\beta_k}U_0^{\delta_k}$$
with the exponents
$$\alpha_k=\frac{1}{p_k}\sum_{j=1}^{N-1}(k-j)s^j\longrightarrow\frac{N(N-2)}{4A},$$
$$\beta_k=\frac{1}{p_k}\sum_{j=1}^Ns^j=\frac{1}{A(s^k-1)+p}\frac{s^k-1}{s-1}s\longrightarrow \frac{s}{A(s-1)}$$
$$\delta_k =\frac{s^kp}{p_k}\longrightarrow\frac{p}{A}.$$
We conclude that
$$\|u(t)\|_\infty=\lim_{k\rightarrow +\infty}U_k\leq Ct^{- \frac{N}{2A}}U_0^{\frac{p}{A}}$$
\textit{i.e.} \eqref{Linfty lp}.

\medskip
Case 2: $N=2$. Starting from Sobolev inequality \eqref{Sobolev pN bis} instead of \eqref{Sobolev sum} with $p_i=2$ and $\lambda_i=\frac{m_i+p-1}{2}$
we get
$$\|u(t)\|_q\leq C t^{-\frac{1}{\left[\bar m-1+p(1-2/q)\right]}} \|u_0\|^{-\frac{p}{\left[\bar m-1+p(1-2/q)\right]}}_p$$
for every $q>2$. We conclude passing to the limit on $q$ as $q\rightarrow+\infty$.
\qed

\medskip

The constant in the previous calculations blows up both as $p\rightarrow1^+$. Nevertheless, an interpolation argument allows to obtain the desired $L^1-L^\infty$ smoothing effect.

\bigskip

\noindent {\sl Proof of Theorem \eqref{L1LI}.}
Putting $\tau_k= 2^{-k}t$, estimate  \eqref{Linfty lp} with (for instance) $p=2$  applied in the interval $[\tau_1,\tau_0]$ gives
$$\|u(t)\|_\infty\leq c(t/2)^{-\gamma_2}\|u(\tau_1)\|_2^{4\gamma_2/N}
\leq
c (t/2)^{-\gamma_2}
\|u(\tau_1)\|_1^{2\gamma_2/N}
\|u(\tau_1)\|_\infty^{2\gamma_2/N}
.$$
We now apply the same estimate in the interval $[\tau_2,\tau_1]$, thus getting
$$\|u(t)\|_\infty\leq c(t/2)^{-\gamma_2}\|u(\tau_1)\|_2^{4\gamma_2/N}
\leq
c (t/2)^{-\gamma_2}
\|u(\tau_1)\|_1^{2\gamma_2/N}
\left(
c (t/4)^{-\gamma_2}
\|u(\tau_1)\|_1^{2\gamma_2/N}
\right)^{2\gamma_2/N}
.$$
Iterating this calculation in $[\tau_k,\tau_{k-1}]$, using Proposition \ref{decay of the $L^p$}, we obtain
$$\|u(t)\|_\infty\leq c^{a_k}2^{b_k}(t)^{-d_k}
\|u(0)\|_1^{e_k}
\|u(\tau_k)\|_2^{f_k}.$$

Recalling that $\bar m>m_c$ (\textit{i.e.} $2\gamma_2/N<1$), we see that the exponents
satisfy, in the limit $k\rightarrow +\infty$,
$$a_k=\sum_{j=0}^{k-1}
\left(\frac{2\gamma_2}{N}\right)^j\longrightarrow
\frac{2\alpha}{N}+1$$
$$b_k=
\sum_{j=0}^{k-1}\gamma_2(j+1)
\left(\frac{2\gamma_2}{N}\right)^j
\longrightarrow
\frac{(\bar m-1)N+4}{[(\bar m-1)N+2]^2},$$
$$d_k=\gamma_2a_k\longrightarrow \alpha,$$
$$e_k=a_k-1\longrightarrow \frac{2\alpha}{N},$$
$$f_k=2\left(\frac{2 \gamma_2}{N}\right)^k\longrightarrow 0.$$
\qed


\medskip

\section*{Acknowledgments}

F. Foe and B. Volzone were partially supported by TAMPA of the Italian INDIUM (National Institute of High Mathematics) and  J. L. V\'azquez was funded by  grants PGC2018-098440-B-I00 and PID2021-127105NB-I00 from MUCIN (the Spanish Government). J.~L.~V\'azquez is an Honorary Professor at Univ. Complutense de Madrid and member of IMG.  B.Volzone would like to thank Assimilator D'Aquinas for interesting discussions about suitable numerical methods, and F\'elix del Teso  for supplying us with accurate numerical computations. Finally,    we wish to thank the referees for suggestions that improved the quality of the paper.

\

\end{document}